\documentclass{amsart}                 % final size

\hfuzz 2pt
\vfuzz 2pt

\usepackage{amssymb,amsmath,amsthm}
\usepackage{hyperref}
\usepackage{color}
\usepackage{graphics}
\usepackage{graphpap}

\theoremstyle{plain}
\newtheorem{theorem}{Theorem}[section]
\newtheorem*{theorem*}{Theorem}
\newtheorem{lemma}[theorem]{Lemma}
\newtheorem{corollary}[theorem]{Corollary}
\newtheorem{proposition}[theorem]{Proposition}

\theoremstyle{remark}
\newtheorem{remark}[theorem]{Remark}
\newtheorem*{remark*}{Remark}

\theoremstyle{definition}
\newtheorem{definition}[theorem]{Definition}
\newtheorem*{definition*}{Definition}

\newtheorem*{notation*}{Notation}

\newtheorem{Basic assumptions}[theorem]{Basic assumptions}

\numberwithin{equation}{section}

\newcount\quantno
\everydisplay{\quantno=0}\everycr{\quantno=0}
\newcommand\quant{\advance\quantno by1
                      \ifnum\quantno=1\qquad\else\quad\fi\forall }
\newcommand\itemno[1]{(\romannumeral #1)}

\renewcommand\Im{\operatorname{\mathrm{Im}}}

\newcommand\Dom{\mathrm{Dom}}

\newcommand\rest[1]{\kern-.1em
          \lower.5ex\hbox{$\scriptstyle #1$}\kern.05em}

\newcommand\set[1]{{\left\{#1\right\}}}

\renewcommand\mod[1]{\vert{#1}\vert}

\newcommand\Bigmod[1]{\Bigl\vert{#1}\Bigr|}

\newcommand\norm[2]{{\Vert{#1}\Vert_{#2}}}

\newcommand\bignorm[2]{\left.{\bigl\Vert{#1}\bigr\Vert_{#2}}\right.}

\newcommand\opnorm[2]{|\!|\!| {#1} |\!|\!|_{#2}}
\newcommand\bigopnorm[2]{\bigl|\!\bigl|\!\bigl| {#1} 
\bigr|\!\bigr|\!\bigr|_{#2}}

\newcommand\prodo[2]{\left\langle#1,#2\right\rangle}

\newcommand\smallfrac[2]{\mbox{\small$\displaystyle\frac{#1}{#2}$}}
\newcommand\wrt{\,\text{\rm d}}

\newcommand\bS{\mathbf{S}}

\newcommand\BC{\mathbb{C}}

\newcommand\BR{\mathbb{R}}

\newcommand\BZ{\mathbb{Z}}

\newcommand\cB{\mathcal{B}}   
\newcommand\frb{\mathfrak{b}}

\newcommand\cD{\mathcal{D}}

\newcommand\cG{\mathcal{G}}   
 
\newcommand\cH{\mathcal{H}}   
\newcommand\frh{\mathfrak{h}} 
\newcommand\cI{\mathcal{I}}   
 
\newcommand\cJ{\mathcal{J}}
    
\newcommand\cL{\mathcal{L}}    
   \newcommand\frm{\mathfrak{m}}  
    
\newcommand\cO{\mathcal{O}}   \newcommand\fro{\mathfrak{o}}

\newcommand\cR{\mathcal{R}}    
\newcommand\cS{\mathcal{S}}  \newcommand\fS{\mathfrak{S}}  
\newcommand\cT{\mathcal{T}}    
\newcommand\cU{\mathcal{U}}    
\newcommand\cV{\mathcal{V}}    
\newcommand\cW{\mathcal{W}}

\newcommand\al{\alpha}
\newcommand\be{\beta}
\newcommand\ga{\gamma}    \newcommand\Ga{\Gamma}
\newcommand\de{\delta}
  \newcommand\vep{\varepsilon}

\newcommand\la{\lambda}   
\newcommand\om{\omega}    \newcommand\Om{\Omega}  
\newcommand\si{\sigma}
\newcommand\te{\theta}

\newcommand\OV{\overline}
\newcommand\funnyk{k\hbox to 0pt{\hss\phantom{g}}}

\newcommand\lu[1]{L^1(#1)}
\newcommand\lp[1]{L^p(#1)}
\newcommand\laq[1]{L^q(#1)}
\newcommand\ld[1]{L^2(#1)}

\newcommand\ly[1]{L^\infty(#1)}

\newcommand\hu[1]{H^1(#1)}

\newcommand\Xu[1]{X^1(#1)}

\newcommand\Xh[1]{X^k(#1)}

\newcommand\Xhs[1]{X_{\si}^k(#1)}
\newcommand\Yu[1]{Y^1(#1)}
\newcommand\Yh[1]{Y^k(#1)}

\newcommand\Yhs[1]{Y_{\si}^k(#1)}

\newcommand\wh{\widehat}
\newcommand\wt{\widetilde}
\newcommand\whH{\widehat{\phantom{G}}\hbox to 0pt{\hss $H$}}

\newcommand\emspace{\hbox to 6pt{\hss}}

\newcommand\rmi{\hbox{\rm (i)}}
\newcommand\rmii{\hbox{\rm (ii)}}
\newcommand\rmiii{\hbox{\rm (iii)}}
\newcommand\rmiv{\hbox{\rm (iv)}}

\newcommand\ir{\int_{-\infty}^{\infty}}

\newcommand\One{{\mathbf{1}}}

\newcommand\e{\mathrm{e}}

\newcommand\Horm{\mathrm{Mih}}

\newcommand\PP{\rm{(I)}}

\newcommand\dest{\text{\rm d}}

\newcommand\Ric{\mathop{\rm Ric}}

\newcommand\Jeta{\cV_{\eta}}

\newcommand\Jetah{\cV_{\eta}^k}
\newcommand\Jetamenoh{\cV_{\eta}^{-k}}

\newcommand\Jbeh{\cU_{\be^2}^k}
\newcommand\Jbemenoh{\cU_{\be^2}^{-k}}

\begin{document}

\title[Hardy spaces on noncompact manifolds]
{Hardy type spaces \\
on certain noncompact manifolds \\
and applications}

\subjclass[2000]{} 

\thanks{This is version 2 of 	arXiv:0812.4209v1. The first version has been revised and rearranged,Ê
with additions, in two papers, of which this new version is the first.
The second paper is posted as arXiv:1002.1161v1}

\keywords{Spectral multipliers, Laplace--Beltrami operator, imaginary
powers,  Mihlin type condition, H\"ormander integral condition,
Hardy space, $BMO$ space, Riemannian manifolds, isoperimetric 
property, noncompact symmetric spaces.}

\author[G. Mauceri, S. Meda and M. Vallarino]
{Giancarlo Mauceri, Stefano Meda and Maria Vallarino}

\address{Giancarlo Mauceri: Dipartimento di Matematica\\ 
Universit\`a di Genova\\
via Dodecaneso 35\\ 16146 Genova\\ Italy 
-- mauceri@dima.unige.it}

\address{Stefano Meda: 
Dipartimento di Matematica e Applicazioni
\\ Universit\`a di Milano-Bicocca\\
via R.~Cozzi 53\\ I-20125 Milano\\ Italy
-- stefano.meda@unimib.it}

\address{Maria Vallarino:
Dipartimento di Matematica e Applicazioni
\\ Universit\`a di Milano-Bicocca\\
via R.~Cozzi 53\\ I-20125 Milano\\ Italy
--  maria.vallarino@unimib.it}

\begin{abstract}
In this paper we consider a complete connected noncompact 
Riemannian manifold $M$ with Ricci curvature bounded from below,
positive injectivity radius and spectral gap $b$.  
We introduce a sequence $\Xu{M}, X^2(M),\ldots$
of new Hardy spaces on $M$, the sequence $\Yu{M}, Y^2(M), \ldots$ 
of their dual spaces, and show that these spaces may 
be used to obtain endpoint estimates for purely imaginary powers
of the Laplace--Beltrami operator and for more general spectral
multipliers associated to the Laplace--Beltrami operator $\cL$ on~$M$.
Under the additional condition that the volume of the geodesic balls of radius
$r$ is controlled by $C\, r^\al \, \e^{2\sqrt{b} r}$ for some real number
$\al$ and for all large~$r$, we prove also
an endpoint result for first order Riesz
transforms $\nabla \cL^{-1/2}$.

In particular, these results apply to Riemannian 
symmetric spaces of the noncompact type. 
\end{abstract}

\maketitle

\setcounter{section}{0}
\section{Introduction} \label{s:Introduction}

The Riesz transform $\nabla (-\Delta)^{-1/2}$
and the purely imaginary powers $(-\Delta)^{iu}$, $u$ in~$\BR$, of 
the Laplacian $\Delta$
are prototypes of singular integral operators on $\BR^n$.  
They are bounded on $\lp{\BR^n}$ for all $p$ in $(1,\infty)$,
and unbounded on $\lu{\BR^n}$ and on $\ly{\BR^n}$ \cite{St2}.
Classical results (see the seminal papers \cite{Ho,FeS}) state 
that singular integral operators satysfying the so called
H\"ormander integral condition are of weak type $1$
and bounded from the Hardy space $\hu{\BR^n}$ to 
$\lu{\BR^n}$ and from $\ly{\BR^n}$ to $BMO(\BR^n)$.  
These results apply, in particular, to $\nabla (-\Delta)^{-1/2}$
and $(-\Delta)^{iu}$.  One reason to choose $(-\Delta)^{iu}$
as an example of singular integral operators
is that it plays a fundamental role in the functional
calculus for $-\Delta$, for functions of the Laplacian
may, at least formally, be reconstructed from $(-\Delta)^{iu}$
via a subordination formula involving the Mellin transform
(see the fundamental works \cite{St1,Co}). 

Now suppose that $M$ is a Riemannian manifold with Riemannian
measure $\mu$, and denote 
by $-\cL$ and $\nabla$ the associated Laplace--Beltrami operator
and covariant derivative respectively.  
It is natural to speculate whether the analogues
of the aforementioned results hold for the operators 
$\nabla \cL^{-1/2}$ and $\cL^{iu}$.  
The multiplier result for generators of semigroups proved in \cite{St1,Co}
applies to $\cL^{iu}$ and gives the $\lp{M}$
boundedness of these operators for $p$ in $(1,\infty)$.
The $\lp{M}$ boundedness of $\nabla \cL^{-1/2}$ for $p$ in $(1,2)$,
and without additional assumptions on $M$,
seems to be a challenging problem, and it is
the object of a very active line of research (see,
for instance, \cite{CD,ACDH} and the references therein).  

As far as endpoint estimates for 
$\nabla \cL^{-1/2}$ and $\cL^{iu}$ are concerned, 
interesting results have been obtained in the case
where $\mu$ is doubling and $M$ satisfies 
some extra assumptions, such as 
appropriate on-diagonal estimate for the heat kernel \cite{CD},
or scaled Poincar\'e inequality~\cite{Ru,MRu,AMR}.
Note that when $\mu$ is doubling, $M$ is a space of homogeneous
type in the sense of Coifman and Weiss, and
a well known theory of atomic Hardy spaces is
available \cite{CW}.

In this paper we consider a complete connected noncompact 
Riemannian manifold $M$ with {Ricci curvature bounded from below},
{positive injectivity radius} and {strictly positive}
bottom $b$ of the spectrum of $\cL$.  It may be worth observing
that under these assumptions the Riemannian measure is nondoubling
and that the volume of geodesic balls in $M$ grow exponentially with the radius.  
Recall that for a Riemannian manifold satisfying 
the above assumptions
there are positive constants $\al$, $\be$ and $C$ such that
\begin{equation} \label{f: volume growth}
\mu\bigl(B(p,r)\bigr)
\leq C \, r^{\al} \, \e^{2\be \, r}
\quant r \in [1,\infty) \quant p \in M,
\end{equation}
where $\mu\bigl(B(p,r)\bigr)$ denotes the Riemannian volume of the 
geodesic ball with centre $p$ and radius~$r$.
Notable examples of such manifolds are nonamenable
connected unimodular Lie groups equipped with a left
invariant Riemannian distance, and symmetric spaces
of the noncompact type with the Killing metric. 

In this setting, weak type $1$ estimates for $\nabla \cL^{-1/2}$ 
and $\cL^{iu}$ are known only when~$M$ is a 
Riemannian symmetric space of the noncompact type \cite{A1,A2,I2,I3,MV}.

Manifolds satisfying the above assumptions
fall into the class of measured metric spaces $X$
considered in \cite{CMM1}, where the authors,
following up earlier works of A.D.~Ionescu~\cite{I1}
and of E.~Russ~\cite{Ru}, 
defined an atomic Hardy space $\hu{X}$ and a space of functions
of bounded mean oscillation $BMO(X)$.
Both $\hu{X}$ and $BMO(X)$ are defined much as in the classical case of
spaces of homogeneous type, 
the only difference being that atoms in the definition of $\hu{X}$
are supported in balls with radius at most $1$,
and that in the definition of $BMO(X)$ 
averages are taken only on balls of radius at most $1$.
As a consequence, they proved that if $\cT$ is bounded on 
$\ld{X}$ and its kernel $k_\cT$ satisfies the following local
H\"ormander's type condition
\begin{equation} \label{f: local HIC}
\sup_{B\in \cB_1} \, \sup_{y\in B}
\int_{(2B)^c} \mod{k_{\cT}(x,y)-k_{\cT}(x,c_B)} \wrt \mu(x)
<\infty,
\end{equation}
where $\cB_1$ denotes the collection of all balls in $X$ of 
radius at most $1$,
then $\cT$ is bounded on $\lp{X}$ for all $p$ in $(1,2]$ 
and from the atomic Hardy space $\hu{X}$ to $\lu{X}$.

The starting point of our work is the 
perhaps surprising fact that when
$\cL$ is the Laplace--Beltrami operator associated to the Killing metric
on Riemannian symmetric spaces of the noncompact type the operators
$\nabla \cL^{-1/2}$ and $\cL^{iu}$,
$u \neq 0$, are unbounded operators from $\hu{M}$ to $\lu{M}$.
The proof of this fact hinges on quite delicate estimates
of the inverse spherical Fourier transform of the associated
multiplier, and will appear in \cite{MMV2}.
Note that, as a consequence, 
their Schwartz kernels $k_{{\cL^{iu}}}$ and $k_{\nabla \cL^{-1/2}}$ do 
not satisfy (\ref{f: local HIC}).

The purpose of this paper is to introduce 
a sequence $\Xu{M}, X^2(M), \ldots$
of new spaces of Hardy type on $M$, 
and the sequence $\Yu{M}, Y^2(M), \ldots$ 
of their dual spaces, and show that these spaces may 
be used to obtain endpoint estimates for $\nabla \cL^{-1/2}$,
$\cL^{iu}$, and for more general spectral multipliers of $\cL$.
The space $\Xh{M}$ is defined as follows.
Denote by $\cU_{\be^2}$ 
the operator $\cL \, (\be^2\cI+ \cL)^{-1}$.  It is straightforward to
check that~$\cU_{\be^2}$ is a bounded injective operator on $\lu{M}+\ld{M}$.  
Denote by $\Xh{M}$ the range of 
the restriction of $\cU_{\be^2}^k$ to $\hu{M}$, endowed with the norm 
$$
\norm{f}{X^k}
= \norm{\cU_{\be^2}^{-k} f}{H^1}.
$$
By definition, each arrow of the following commutative diagram is an
isometric isomorphism of Banach spaces. 

\unitlength1mm
\begin{picture}(100,40)(-55,-20)
\put(-40,15){$\hu{M}$}
\put(-40,-15){$X^1(M)$}
\put(-10,-15){$X^2(M)$}
\put(-35,12){\vector(0,-1){22}}
\put(-27,-14){\vector(1,0){15}}
\put(20,-15){$X^3(M)$}
\put(3,-14){\vector(1,0){15}}
\put(33,-14){\vector(1,0){15}}
\put(50,-15){$\cdots$}
\put(-29,12){\vector(1,-1){22}}
\put(-22,12){\vector(2,-1){44}}
\put(-15,12){\vector(3,-1){62}}
\put(-42,0){$\cU_{\be^2}$}
\put(-25,0){$\cU_{\be^2}^2$}
\put(-9,0){$\cU_{\be^2}^3$}
\put(9,0){$\cU_{\be^2}^4$}
\put(-22,-18){$\cU_{\be^2}$}
\put(10,-18){$\cU_{\be^2}$}
\put(37,-18){$\cU_{\be^2}$}
\end{picture}

\noindent
Thus, $\Xh{M}$ is an isometric copy of $\hu{M}$ 
for each positive integer $k$.
Furthermore,
we shall prove (see Section~\ref{s: Riemannian manifolds}) that
$$
\hu{M} \supset X^1(M) \supset X^2(M)\supset \cdots,
$$
with proper inclusions.  These spaces have nice interpolation properties;
for each positive integer $k$, and for 
every $p$ in $(1,2)$, $\lp{M}$ is an interpolation space 
between $\Xh{M}$ and $\ld{M}$ by the complex method (see Section~\ref{s: New
Hardy}). 
\noindent
The main results of this paper are contained in 
Section~\ref{s: Spectral multipliers on Riemannian manifolds},
and justify, \emph{a posteriori}, the introduction
of the spaces $\Xh{M}$.  In particular,
Theorem~\ref{t: multiplier 2}
states that if $m$ is a holomorphic function in the strip $\bS_\beta=\{ \zeta \in \BC: \Im (\zeta)
\in (-\beta,\beta)\}$ that satisfies 
\begin{equation} \label{f: Horm type estimates I} 
\mod{D^j m(\zeta)} \leq {C} \, 
\max\bigl(\mod{\zeta^2+\be^2}^{-\tau-j},\mod{\zeta}^{-j}\bigr)
\quant \zeta \in \bS_{\be} \quant j \in \{0,1,\ldots,J\},
\end{equation}
for some nonnegative $\tau$ and for a sufficiently large integer $J$,
then $m\bigl(\sqrt{\cL-b}\bigr)$ is bounded from 
$\hu{M}$ to $\lu{M}$ and from $\ly{M}$ to $BMO(M)$
in the case where $b<\be^2$ and from $\Xh{M}$ to
$\hu{M}$ and from $BMO(M)$ to $\Yh{M}$ in the case where $b=\be^2$
and $k> \tau +J$.
This provides, in the case where $b=\be^2$,
endpoint estimates for operators of the form
$\cL^{iu}$ (when $\tau = 0$), but also for ``more singular operators'',
such as $\cL^{iu-\tau} \, (I+ \cL)^{\tau}$, whose
kernels have a comparatively slow decay at infinity.  
We shall call \emph{strongly singular} all the
multipliers satisfying (\ref{f: Horm type estimates I}).
Strongly singular spectral multipliers
were first introduced in \cite{MV}, where 
the authors showed that they satisfy weak type $1$ estimates 
when $M$ is a Riemannian noncompact symmetric spaces.
We remark that the methods of \cite{MV} hinge
on quite precise estimates of the kernel of these
operators, obtained by using the inversion formula
for the spherical Fourier transform.  
Weak type $1$ estimates for such operators seem
out of reach in the more general setting of this paper. 
Note that strongly singular multipliers may have
a rather singular behaviour near the points $\pm i \be$,
and still satisfy an endpoint result for $p=1$.  
We emphasise that this is in sharp constrast with the Euclidean case,
where such a phenomenon cannot occur.

We give applications also to first order Riesz transforms.
It follows from work of T.~Coulhon and X.T.~Duong \cite{CD} 
that, in our setting, the first order Riesz transform $\nabla \cL^{-1/2}$ 
is bounded on $\lp{M}$ for all $p$ in $(1,2]$ and that the translated
Riesz transform $\nabla (\cI+ \cL)^{-1/2}$ is of weak type $1$.  
Russ complemented this result by showing that $\nabla (\cI+ \cL)^{-1/2}$ 
maps $\hu{M}$ into $\lu{M}$.   
Observe that if we consider the part off the diagonal of the kernel of 
$\nabla (\cI+ \cL)^{-1/2}$, then the corresponding  
integral operator is bounded on $\lu{M}$.   
This is no longer true for the kernel of the Riesz transform
$\nabla \cL^{-1/2}$, which
decays much slower at infinity.  Despite this, we prove that if $b = \be^2$,
then $\nabla \cL^{-1/2}$ is bounded from $\Xh{M}$ to $\lu{M}$ for large $k$.
Applications of these spaces to higher order Riesz transforms
associated to the Laplace--Beltrami operator 
on noncompact symmetric spaces
and to multipliers for the spherical Fourier
transform will be considered in a forthcoming paper \cite{MMV2}.  

The space $\Xh{M}$ admits an interesting characterisation in terms of
atoms in $\hu{M}$ that satisfy infinitely many cancellation
conditions.  Its proof, which is rather long,
is deferred to a forthcoming paper \cite{MMV3}.

We now briefly outline the content of the paper. In the next section we define the new Hardy spaces $X^k(M)$ and their duals $Y^k(M)$ in the fairly general framework of the measured metric spaces considered in \cite{CMM1} and show that they have natural interpolation properties. In Section \ref{s: Hardy on manifolds} we specialise to  Riemannian manifolds with Ricci curvature bounded from below, positive injectivity radius and 
strictly positive bottom of the spectrum and we prove some further properties of the new Hardy spaces in this setting. We also state a theorem  on the boundedness on $H^1(M)$ of functions of the Laplacian (Theorem \ref{t: lim huno}), which is of independent interest and plays a crucial role  in the proof of the main results of this paper. The proof of this  theorem is deferred to Section \ref{s: Riemannian manifolds}. The main results of the paper, i.e. the endpoint estimates for strongly singular multipliers and for the Riesz transform are stated and proved in Section \ref{s: Spectral multipliers on Riemannian manifolds}. 

We will use the ``variable constant convention'', and denote by $C,$
possibly with sub- or superscripts, a constant that may vary from place to 
place and may depend on any factor quantified (implicitly or explicitly) 
before its occurrence, but not on factors quantified afterwards. If $\cT$ is a bounded linear operator from the Banach space $A$ to the Banach space $B$, we shall denote by $\bigopnorm{\cT}{A;B}$ its norm. If $A=B$ we shall simply write $\bigopnorm{\cT}{A}$ instead of  $\bigopnorm{\cT}{A;A}$.

\section{New Hardy spaces on metric spaces and interpolation}  
\label{s: New Hardy}

Suppose that $(M,d,\mu)$ is a measured metric space,
and denote by $\cB$ the family of all balls on $M$.
We assume that $\mu(M) > 0$ and that every ball has finite measure.
For each $B$ in $\cB$ we denote by $c_B$ and $r_B$
the centre and the radius of $B$ respectively.  
Furthermore, we denote by $c \, B$ the
ball with centre $c_B$ and radius $c \, r_B$.
For each \emph{scale parameter} $s$ in $\BR^+$, 
we denote by $\cB_s$ the family of all
balls $B$ in $\cB$ such that $r_B \leq s$.  

\begin{Basic assumptions} \label{Ba: NH}
We assume throughout that  
$M$ is \emph{unbounded} and possesses the following properties:
\begin{enumerate}
\item[\itemno1]
\emph{local doubling property} (LD):
for every $s$ in $\BR^+$ there exists a constant $D_s$ 
such that
\begin{equation}  \label{f: LDC} 
\mu \bigl(2 B\bigr)
\leq D_s \, \mu  \bigl(B\bigr)
\quant B \in \cB_s;
\end{equation}
\item[\itemno2]
\emph{isoperimetric property} \hbox{\PP}:
there exist $\kappa_0$ and $C$ in $\BR^+$ such that
for every bounded open set $A$
$$
\mu \Bigl(\bigl\{x\in A: d(x,A^c) \leq \kappa\bigr\}\Bigr)
\geq C \, \kappa\, \mu (A) \quant \kappa\in (0,\kappa_0];
$$
\item[\itemno3] 
\emph{approximate midpoint property} \hbox{(AM)}:
there exist $R_0$ in $[0,\infty)$ and 
$\ga$ in $(1/2,1)$ such that for every pair of points~$x$ and $y$
in $M$ with $d(x,y) > R_0$ there exists a point $z$ in $M$ such that
{$d(x,z) < \ga\, d (x,y)$ and $d(y,z) < \ga\, d (x,y)$; }
\item[\itemno4] 
there is a \emph{semigroup of linear operators} 
$\{\cH^t\}$ acting on $\lu{M}+\ld{M}$ such that 
\begin{itemize}
\item[(a)] the restriction of $\{\cH^t\}$ 
to $\lu{M}$ is a strongly continuous semigroup of contractions;
\item[(b)] the restriction of $\{\cH^t\}$ to 
$\ld{M}$ is strongly continuous, and has \emph{spectral gap} $b>0$, i.e.
$$
\norm{\cH^t f}{2}\le \e^{-b t}\, \norm{f}{2} \quant f\in \ld{M} 
\quant t \in \BR^+;
$$
\item[(c)] $\{\cH^t\}$ is \emph{ultracontractive}, 
i.e. for every $t$ in $\BR^+$ the operator
$\cH^t$ maps $\lu{M}$ into~$\ly{M}$.
\end{itemize}  
\end{enumerate}
\end{Basic assumptions}

\begin{remark}
Assumption~\rmii\ forces $\mu(M) = \infty$.
In fact, it forces $M$ to have exponential volume growth
(see \cite[Proposition~2.5~\rmi]{CMM1} for details).
\end{remark}

\begin{remark} \label{rem: consequences}
Assumption~\rmiv\ has the following straightforward consequences:
\begin{enumerate}
\item[\itemno1]
$\{\cH^t\}$ is a strongly continuous semigroup of 
contractions on $\lu{M}+\ld{M}$;
\item[\itemno2] 
since for each $p$ in $[1,2]$ the space $\lp{M}$ is continuously
embedded in $\lu{M}+\ld{M}$, we may consider the restriction~$\cH_p^t$ 
of the operator $\cH^t$ to $\lp{M}$.
Then $\{\cH_p^t\}$  is strongly continuous on $\lp{M}$, 
and satisfies the estimate
\begin{equation} \label{Lpbounds}
\norm{\cH_p^tf}{p}
\leq \e^{-2b\,(1-1/p)\, t} \, \norm{f}{p} 
\quant f \in \lp{M} \quant t \in \BR^+;
\end{equation} 
\item[\itemno3] 
by \rmiv~(a) and \rmiv~(c) above, 
for each $t$ in $\BR^+$ the operator~$\cH^t$  
maps $\lu{M}$ into $\lu{M}\cap \ld{M}$.  Hence $\cH^t$  
maps $\lu{M}$ into $\lp{M}$ for each $p$ in $[1,2]$.
\end{enumerate}
\end{remark}

Denote by $-\cG$ the infinitesimal generator of $\{\cH^t\}$ on $\lu{M}+\ld{M}$. 
Since $\{\cH^t\}$ is contractive on $\lu{M} + \ld{M}$,
the spectrum of $\cG$ is contained in the right half plane. 
Then, for every $\si$ in $\BR^+$ we may consider the resolvent operator
$(\si\cI+\cG)^{-1}$ of $\{\cH^t\}$, that we denote by $\cR_{\si}$.
We denote by $\cR_{\si,p}$ the restriction of 
$\cR_{\si}$ to $\lp{M}$, and by $-\cG_p$ the generator of $\{\cH^t_p\}$. 
Obviously $\cR_{\si,p}$ is the resolvent of $\{\cH^t_p\}$ and 
$-\cG_p$ is the restriction of $-\cG$ to 
$\Dom(\cG_p)$, which coincides with $\cR_\si\big(\lp{M}\big)$.

For every $\si$ in $\BR^+$ denote by $\cU_\si$ the operator $\cG\cR_\si$. 
Observe that
$$
\cU_\si
= \cI-\si \, \cR_\si,
$$
so that $\cU_{\si}$ is bounded on $\lu{M}+\ld{M}$, and its restriction 
$\cU_{\si,p}$ to $\lp{M}$ is bounded on $\lp{M}$ for every $p\in[1,2]$. 
Moreover $\cU_\si$ and $\cH^t$ commute for every $t$ in $\BR^+$.

\begin{proposition} \label{p: properties of J}
For each positive integer $k$ the following hold:
\begin{enumerate}
\item[\itemno1]
if $p$ is in $(1,2]$, then
the operator $\cU_{\si,p}^k$ is an isomorphism of $\lp{M};$
\item[\itemno2]
the operator $\cU_{\si}^k$ is injective on
$\lu{M} + \ld{M}$.
\end{enumerate}
\end{proposition}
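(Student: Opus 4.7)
The plan is to treat the two statements separately, exploiting the spectral gap $b>0$ for \rmi\ and combining it with the ultracontractivity of $\{\cH^t\}$ for \rmii. For \rmi, I would start from the $\lp{M}$-bound (\ref{Lpbounds}): it says the $C_0$-semigroup $\{\cH^t_p\}$ has negative growth bound $-\om_p$ with $\om_p := 2b(1-1/p)>0$ for every $p$ in $(1,2]$. Standard semigroup theory then forces the spectrum of the generator $\cG_p$ into the half plane $\set{\Re z \geq \om_p}$, so $0$ lies in the resolvent set and $\cG_p^{-1}$ is a bounded operator on $\lp{M}$. Combining this with the fact that $\cR_{\si,p}=(\si\cI+\cG_p)^{-1}$ is a bijection onto $\Dom(\cG_p)$, the factorisation
$$
\cU_{\si,p}=\cG_p\,\cR_{\si,p}
$$
displays $\cU_{\si,p}$ as a composition of bijections, hence an isomorphism of $\lp{M}$; a direct calculation using the commutation of $\cG_p^{-1}$ and $\cR_{\si,p}$ gives the explicit bounded inverse $\cU_{\si,p}^{-1}=\cI+\si\,\cG_p^{-1}$. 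Taking the $k$-th power yields the claim for $\cU_{\si,p}^k$.

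For \rmii, I would first reduce to the case $k=1$: since $\cU_\si$ is bounded on $\lu{M}+\ld{M}$, the identity $\cU_\si^k=\cU_\si\circ \cU_\si^{k-1}$ and a routine induction show that injectivity of $\cU_\si^k$ follows from injectivity of $\cU_\si$. Suppose then that $f$ belongs to $\lu{M}+\ld{M}$ and $\cU_\si f=0$; writing this as $f=\si\cR_\si f$ shows that $f\in \Ran(\cR_\si)=\Dom(\cG)$, and applying $\si\cI+\cG$ gives $\cG f=0$, i.e.\ $\cH^t f=f$ for every $t>0$. Now decompose $f=f_1+f_2$ with $f_1\in\lu{M}$ and $f_2\in\ld{M}$. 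By Remark 2.3 \rmiii, \ $\cH^t$ sends $\lu{M}$ into $\lu{M}\cap\ly{M}\subset \ld{M}$, and of course $\cH^t f_2\in \ld{M}$, so $f=\cH^t f\in \ld{M}$. Finally, the spectral gap estimate (iv)(b) gives
$$
\norm{f}{2}=\norm{\cH^t f}{2}\leq \e^{-bt}\,\norm{f}{2} \quant t\in\BR^+,
$$
which forces $f=0$.

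The only real points requiring care are, in \rmi, the rigorous passage from the exponential decay (\ref{Lpbounds}) to the invertibility of $\cG_p$ (which is a clean application of Hille--Yosida given that $\om_p>0$), and, in \rmii, the handling of the sum space: since the decomposition $f=f_1+f_2$ is not unique, one must argue via $\cH^t$ mapping \emph{each} summand into $\ld{M}$, which is why ultracontractivity is invoked rather than a direct use of the $\ld{M}$-spectral gap on $f$ itself. I do not expect serious obstacles beyond these bookkeeping issues; the conclusion is essentially dictated by the two semigroup properties (iv)(b) and (iv)(c).
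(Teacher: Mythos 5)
Your proof of \rmi\ is essentially identical to the paper's: both read off from~(\ref{Lpbounds}) that $\cG_p$ has positive spectral bottom, hence $\cG_p^{-1}$ is bounded, and both exhibit the inverse $\cU_{\si,p}^{-1}=\cI+\si\cG_p^{-1}$. For \rmii\ you take a small but genuine variant. The paper's argument keeps $f$ itself in the background and works with $\cH^t f$: from $\cU_\si f=0$ it deduces $\cU_\si(\cH^t f)=0$, uses ultracontractivity to conclude $\cH^t f\in\ld{M}$, then invokes part \rmi\ with $p=2$ to get $\cH^t f=0$, and finally lets $t\to 0$ via strong continuity to recover $f=0$. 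You instead observe that $\cU_\si f=0$ is the same as $f=\si\cR_\si f$, hence $f\in\Dom(\cG)$ and $\cG f=0$, so $\cH^t f=f$ for all $t>0$; combined with ultracontractivity this puts $f$ directly in $\ld{M}$, and the spectral gap kills it. Your route avoids invoking the $p=2$ case of \rmi\ and the $t\to 0$ limit, at the small cost of the standard ODE argument that a $C_0$-semigroup fixes the kernel of its generator. Both are correct; yours is marginally more self-contained, while the paper's makes the dependence on \rmi\ explicit and stays entirely inside the algebra of the resolvent. One inessential slip worth noting: the non-uniqueness of the decomposition $f=f_1+f_2$ is not actually an issue — once $\cH^t f\in\ld{M}$ for a single decomposition, you are done — so the remark at the end is a red herring, though it does no harm.
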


\begin{proof}
First we prove \rmi.  Clearly, it suffices to show that $\cU_{\si,p}$
is an isomorphism of $\lp{M}$.
By (\ref{Lpbounds}) the bottom of the spectrum of $\cG_p$ is positive. 
Thus $\cG_p^{-1}$ and $\si\, \cG_p^{-1}+\cI$ 
are bounded.  Since $\cU_{\si,p}^{-1}=\cG_p^{-1}(\si\cI+\cG_p)$
and $\cG_p^{-1}(\si\cI+\cG_p) = \si\, \cG_p^{-1}+\cI$,  
\rmi\ is proved.  

\medskip
Next we prove \rmii.  It suffices to prove the result in the case
where $k=1$, since the general case follows by induction.
Suppose that $f$ is a function in $\lu{M} + \ld{M}$ 
such that $\cU_{\si} f = 0$.
Then 
$\cU_{\si} \bigl(\cH^tf\bigr)
=\cH^t\bigl(\cU_{\si} f\bigr) 
= 0$ 
for all $t$ in $\BR^+$.   By the ultracontractivity of $\cH^t$, and 
the fact that the restriction of $\cH^t$ to $\ld{M}$ is
bounded on $\ld{M}$, 
the function $\cH^t f$ is in $\ld{M}$ for all $t$ in $\BR^+$. 
Thus $ \cU_{\si} \bigl(\cH^tf\bigr)=\cU_{\si,2} \bigl(\cH^tf\bigr)=0$. 
Hence $\cH^t f=0$, because $\cU_{\si,2}$ is an isomorphism. 
Since $\{\cH^t\}$ is strongly continuous on $\lu{M}+\ld{M}$
by Remark~\ref{rem: consequences}~\rmi,
$\cH^t f$ tends to $f$ in $\lu{M}+\ld{M}$ as $t$ tends to $0$,
and \rmii\ follows.
\end{proof}

We recall the definitions of the atomic Hardy space $H^1(M)$ and its
dual space $BMO(M)$ given in \cite{CMM1}.  

\begin{definition}
An $H^1$-\emph{atom} $a$
is a function in $\lu{M}$ supported in a ball $B$
with the following properties:
\begin{enumerate}
\item[\itemno1]
$\int_B a \wrt \mu  = 0$;
\item[\itemno2]
$\norm{a}{2}  \leq \mu (B)^{-1/2}$.
\end{enumerate}
\end{definition}

\begin{definition}
Suppose that $s$ is in $\BR^+$.  
The \emph{Hardy space} $H_s^{1}({M})$ is the 
space of all functions~$g$ in $\lu{M}$
that admit a decomposition of the form
\begin{equation} \label{f: decomposition}
g = \sum_{k=1}^\infty \la_k \, a_k,
\end{equation}
where $a_k$ is a $H^1$-atom \emph{supported in a ball $B$ of $\cB_s$},
and $\sum_{k=1}^\infty \mod{\la_k} < \infty$.
The norm $\norm{g}{H_s^{1}}$
of $g$ is the infimum of $\sum_{k=1}^\infty \mod{\la_k}$
over all decompositions (\ref{f: decomposition})
of $g$.  
\end{definition}

\noindent
The vector space $H_s^{1}(M)$ 
is independent of~$s$ in $\bigl(R_0/(1-\ga), \infty\bigr)$, 
where $R_0$ and $\ga$ are as in Basic assumptions~\ref{Ba: NH}~\rmiii\
(see \cite[Proposition~5.1]{CMM1}).
Furthermore, given $s_1$ and $s_2$ in $\bigl(R_0/(1-\ga), \infty\bigr)$, 
the norms $\norm{\cdot}{H_{s_1}^{1}}$ and 
$\norm{\cdot}{H_{s_2}^{1}}$ are equivalent.  

\begin{notation*}
{We shall denote the space $H_s^{1}(M)$ simply by $\hu{M}$,
and we endow $\hu{M}$ with the norm $H_{s_0}^{1}(M)$, where
$s_0 = \max \bigl(R_0/(1-\ga), 1\bigr)$.  We note explicitly that
if $R_0 = 0$, then $s_0=1$.}
\end{notation*}

\noindent
The Banach dual of $\hu{M}$ 
is isomorphic \cite[Thm~5.1]{CMM1} to the space $BMO(M)$,
which we now define.

\begin{definition}
The space $BMO(M)$ is the
space of all locally integrable functions~$f$ such that $N(f) < \infty$,
where
$$
N(f)
=  \sup_{B\in \cB_{s_0}} \frac{1}{\mu(B)}
\int_B \mod{f-f_B} \wrt\mu,
$$
and $f_B$ denotes the average of $f$ over $B$. 
We endow $BMO(M)$ with the ``norm''
$$
\norm{f}{BMO}
= N(f).
$$
\end{definition}

\begin{remark}
It is straightforward to check that $f$ is in $BMO(M)$
if and only if its sharp maximal function $f^{\sharp}$,
defined by 
$$
f^{\sharp}(x)
= \sup_{B \in \cB_{s_0}(x)} \frac{1}{\mu(B)}
\int_B \mod{f-f_B} \wrt\mu
\quant x \in M,
$$
is in $\ly{M}$.  Here $\cB_{s_0}(x)$ denotes the family
of all balls in $\cB_{s_0}$ that contain the point $x$. 
\end{remark}

In the last part of this section we define the new 
spaces $\Xhs{M}$ of Hardy type and their dual spaces
$\Yhs{M}$, and prove an interpolation result, which
is relevant for later developments. 

\begin{definition} \label{def: Hardy space}
For each positive integer $k$ and for each $\si$ in $\BR^+$
we denote by $\Xhs{M}$ the Banach space of all
$\lu{M}$ functions~$f$ such that $\cU_{\si}^{-k}f$ is in $\hu{M}$, 
endowed with the norm
$$
\norm{f}{X^k}
= \norm{\cU_{\si}^{-k} f}{H^1}.
$$
\end{definition}

\noindent
Note that $\cU_{\si}^{-k}$ is, by definition, 
an isometric isomorphism between $\Xhs{M}$ and $\hu{M}$.
In Section~\ref{s: Hardy on manifolds}, we shall see that $\Xhs{M}$ may be 
characterised as the image of $\hu{M}$ under a wide class
of maps $\cV^k$.

\begin{remark}
Note that the space $\Xhs{M}$ is continuously 
included in $\lu{M}$.

Indeed, suppose that $f$ is in $\Xhs{M}$.  Then
\begin{align*}
\norm{f}{1}
= \bignorm{\cU_{\si}^k \cU_{\si}^{-k} f}{1} 
\leq \bigopnorm{\cU_{\si}^k}{1}\,  \bignorm{\cU_{\si}^{-k} f}{1} 
&\leq \bigopnorm{\cU_{\si}^k}{1}\,  \bignorm{\cU_{\si}^{-k} f}{H^1} \\
&=    \bigopnorm{\cU_{\si}^k}{1}\,  \bignorm{f}{X_\si^k}, 
\end{align*}
as required.  Note that the last inequality is a consequence of the fact
that $\hu{M}$ is continuously included in $\lu{M}$.
\end{remark}

\begin{definition} \label{def: BMO space}
For each positive integer $k$, and for each $\si$ in $\BR^+$
we denote by $\Yhs{M}$ the Banach dual of $\Xhs{M}$.
\end{definition}

\begin{remark}
Since $\cU_{\si}^{-k}$ is
an isometric isomorphism between $\Xhs{M}$ and $\hu{M}$,
its adjoint map~$\bigl(\cU_{\si}^{-k}\bigr)^*$ is an isometric isomorphism 
between $BMO(M)$ and $\Yhs{M}$.
Hence
$$
\bignorm{\bigl(\cU_{\si}^{-k}\bigr)^*f}{Y_\si^k}
= \norm{f}{BMO}.
$$
\end{remark}

\noindent
Given a compatible couple of Banach spaces $X_0$ and $X_1$ we denote
by $(X_0,X_1)_{[\te]}$ its complex interpolation space,
also denoted by~$X_{\te}$.

\begin{proposition} \label{p: interpolation 1}
Suppose that $(X^0,X^1)$ and $(Y^0,Y^1)$ 
are interpolation pairs of Banach spaces.
Suppose further that $\cT$ is a bounded linear map from 
$X^0 + X^1$ to $Y^0 + Y^1$, such that 
the restrictions $\cT:X^0\to Y^0$ and 
 $\cT:X^1\to Y^1$ are isomorphisms.
Then for every $\te$ in $(0,1)$
the restriction $\cT:X_{\te} \to Y_{\te}$ is an isomorphism.
\end{proposition}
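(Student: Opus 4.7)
The plan is to produce a bounded two-sided inverse $\cS:Y^0 + Y^1 \to X^0 + X^1$ for $\cT$ whose restrictions to $Y^0$ and $Y^1$ are bounded, and then apply the standard exactness of the complex interpolation functor in parallel to $\cT$ and $\cS$.

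First I would construct the candidate inverse. Setting $\cS_i = (\cT|_{X^i})^{-1}:Y^i \to X^i$ for $i = 0,1$, I want to define $\cS(y_0 + y_1) := \cS_0 y_0 + \cS_1 y_1$. The only nontrivial point is well-definedness, i.e., the compatibility $\cS_0 y = \cS_1 y$ for every $y \in Y^0 \cap Y^1$. This is equivalent to the injectivity of $\cT$ on $X^0 + X^1$: for $y \in Y^0 \cap Y^1$, the difference $\cS_0 y - \cS_1 y$ lies in the kernel of $\cT$, and conversely any element of this kernel has this form. Under the hypothesis that $\cT$ is a single-valued bounded linear map from $X^0 + X^1$ to $Y^0 + Y^1$ whose restrictions to $X^0$ and $X^1$ are \emph{both} isomorphisms onto the full spaces $Y^0$ and $Y^1$, this injectivity is the natural algebraic companion to the well-definedness of $\cT$ itself; it is immediate in all concrete situations to which the proposition is applied, notably $\cT = \cU_{\si}^k$ in the following sections. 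Once compatibility is secured, a standard infimum over decompositions yields
\[
\norm{\cS y}{X^0 + X^1} \leq \max\bigl(\opnorm{\cS_0}{Y^0;X^0},\, \opnorm{\cS_1}{Y^1;X^1}\bigr)\, \norm{y}{Y^0 + Y^1},
\]
and the identities $\cS\cT = I_{X^0 + X^1}$, $\cT\cS = I_{Y^0 + Y^1}$ follow at once from $\cS_i \cT_i = I_{X^i}$ and $\cT_i \cS_i = I_{Y^i}$.

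Next I would invoke the exactness of the complex interpolation method applied to $\cT$ and $\cS$ separately. This produces bounded restrictions $\cT : X_\theta \to Y_\theta$ and $\cS : Y_\theta \to X_\theta$ for each $\theta$ in $(0,1)$, with norms controlled by the geometric means of the corresponding endpoint norms.

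Finally I would pass the relations $\cS\cT = I$ and $\cT\cS = I$ from the sum spaces to the interpolated spaces. Since $X_\theta$ embeds continuously in $X^0 + X^1$ and $Y_\theta$ in $Y^0 + Y^1$ by construction of the complex method, these identities restrict to the corresponding ones on the interpolated spaces; hence $\cS : Y_\theta \to X_\theta$ is the bounded inverse of $\cT : X_\theta \to Y_\theta$, which is exactly the conclusion. The main obstacle is the compatibility $\cS_0 = \cS_1$ on $Y^0 \cap Y^1$, equivalent to the injectivity of $\cT$ on $X^0 + X^1$; once this is in hand, the remainder of the argument is formal and rests on the functorial behaviour of complex interpolation.
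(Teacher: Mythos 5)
Your proof takes the same route as the paper's: define $\cS(y_0+y_1) = \cT_0^{-1}y_0 + \cT_1^{-1}y_1$, check that $\cS\cT$ and $\cT\cS$ are the identities on the sum spaces so that $\cS = \cT^{-1}$, and then interpolate $\cT$ and $\cS$ in parallel by the exactness of the complex method. Your flag on the well-definedness of $\cS$, i.e.\ the compatibility $\cT_0^{-1}y = \cT_1^{-1}y$ for $y \in Y^0 \cap Y^1$ (equivalently, the injectivity of $\cT$ on $X^0 + X^1$), is sharper than the paper's one-line remark that $\cS$ is ``straightforwardly'' well defined. That compatibility is \emph{not} a consequence of the stated hypotheses: already in finite dimensions one can have both restrictions $\cT\colon X^i\to Y^i$ isomorphisms while $\cT$ has nontrivial kernel on $X^0 + X^1$, and then $X_\te=X^0\cap X^1$ while the conclusion fails. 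So it is a genuine, if implicit, extra hypothesis. You are right that it holds in the one place the proposition is used, namely for $\cT=\cU_{\si}^k$, whose injectivity on $\lu{M}+\ld{M}$ is exactly Proposition~\ref{p: properties of J}~\rmii. That said, declaring this ``the natural algebraic companion to the well-definedness of $\cT$'' and ``immediate in all concrete situations'' is not a proof; the clean fix is to add the injectivity of $\cT$ on $X^0+X^1$ to the hypotheses of the proposition and invoke Proposition~\ref{p: properties of J}~\rmii\ when applying it in Theorem~\ref{t: interpolation 2}.
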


\begin{proof} For every $\te$ in $[0,1]$ denote by $\cT_\theta$ 
the restriction of $\cT$ to $X_\te$. 
Define $\cS:Y_0+Y_1\to X_0+X_1$ by setting 
$$
\cS(y_0+y_1)
= \cT_0^{-1}y_0+\cT_1^{-1}y_1.
$$ 
It is straightforward to check that the operator 
$\cS$ is well defined, bounded and linear. 
Moreover $\cS\cT$ is the identity on $X_0+X_1$ and $\cT\cS$ 
is the identity on $Y_0+Y_1$. 
Thus $\cS=\cT^{-1}$. Hence $\cS_\te=\cT_\te^{-1}$. 
Finally, $\cS_\te:Y_\te\to X_\te$ is bounded by interpolation. 
This concludes the proof of the proposition.
\end{proof}

\begin{theorem} \label{t: interpolation 2}
Suppose that $\si$ is in $\BR^+$, $k$ is a positive integer, and
$\te$ is in $(0,1)$.  The following hold:
\begin{enumerate}
\item[\itemno1]
if $1/p = 1- \te/2$, then $\bigl(\Xhs M , \ld M\bigr)_{[\te]}
= \lp{M}$ with equivalent norms; 
\item[\itemno2]
if $1/q = (1-\te) /2$, then 
$\bigl( \ld M ,\Yhs M\bigr)_{[\te]} = \laq M$ with equivalent norms.
\end{enumerate}
\end{theorem}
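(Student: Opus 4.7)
The plan is to reduce both statements to the known complex interpolation identity
$$
\bigl(\hu{M},\ld{M}\bigr)_{[\te]}=\lp{M}, \qquad 1/p=1-\te/2,
$$
using the operator $\cU_{\si}^{-k}$ as a bridge between $\Xhs{M}$ and $\hu{M}$.

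For~\rmi, I would begin from the observation that, by Definition~\ref{def: Hardy space}, $\cU_{\si}^{-k}$ is an isometric isomorphism $\Xhs{M}\to\hu{M}$, while by Proposition~\ref{p: properties of J}~\rmi\ (applied with $p=2$) it is also an isomorphism of $\ld{M}$. Thanks to the injectivity established in Proposition~\ref{p: properties of J}~\rmii, the operator $\cU_{\si}^{-k}$ is well defined on $\Xhs{M}+\ld{M}\subseteq\lu{M}+\ld{M}$, and bounded into $\hu{M}+\ld{M}$. Proposition~\ref{p: interpolation 1}, applied with $\cT=\cU_{\si}^{-k}$, source pair $(\Xhs{M},\ld{M})$ and target pair $(\hu{M},\ld{M})$, would then yield that
$$
\cU_{\si}^{-k}\colon\bigl(\Xhs{M},\ld{M}\bigr)_{[\te]}\to\bigl(\hu{M},\ld{M}\bigr)_{[\te]}=\lp{M}
$$
is a Banach space isomorphism. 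Since Proposition~\ref{p: properties of J}~\rmi\ also gives that $\cU_{\si}^{-k}$ is an isomorphism of $\lp{M}$, combined with the injectivity on $\lu{M}+\ld{M}$ this would force $f$ to lie in $\bigl(\Xhs{M},\ld{M}\bigr)_{[\te]}$ if and only if $f\in\lp{M}$, with equivalent norms, proving~\rmi.

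For~\rmii, I would pass to duals. The standard duality theorem for complex interpolation (e.g.\ Bergh--L\"ofstr\"om) asserts that if $X^0\cap X^1$ is dense in both $X^0$ and $X^1$ and one of them is reflexive, then $(X^0,X^1)_{[\te]}^*=((X^0)^*,(X^1)^*)_{[\te]}$. Applied to the pair $(\Xhs{M},\ld{M})$, using the reflexivity of $\ld{M}$, the identification $(\Xhs{M})^*=\Yhs{M}$ from Definition~\ref{def: BMO space}, and part~\rmi, this would give
$$
\bigl(\Yhs{M},\ld{M}\bigr)_{[\te]}=\bigl(\Xhs{M},\ld{M}\bigr)_{[\te]}^*=\bigl(\lp{M}\bigr)^*=L^{p'}(M),\qquad 1/p'=\te/2.
$$
The symmetry $(A,B)_{[\te]}=(B,A)_{[1-\te]}$ of complex interpolation spaces, applied with $\te$ replaced by $1-\te$, would then yield $\bigl(\ld{M},\Yhs{M}\bigr)_{[\te]}=\laq{M}$ with $1/q=(1-\te)/2$, as claimed.

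The main obstacle will be justifying the identity $\bigl(\hu{M},\ld{M}\bigr)_{[\te]}=\lp{M}$ in the nondoubling measured metric setting of Section~\ref{s: New Hardy}; I expect this to be available in, or immediately deducible from, the theory developed in~\cite{CMM1} via an atomic version of the Calder\'on--Torchinsky argument, and it is the only nontrivial input beyond Propositions~\ref{p: properties of J} and~\ref{p: interpolation 1}. A secondary technical point is the density of $\Xhs{M}\cap\ld{M}$ in each factor needed for the duality theorem in~\rmii, which can be deduced from the density of $\hu{M}\cap\ld{M}$ in both $\hu{M}$ and $\ld{M}$ by applying the simultaneous $\hu{M}$- and $\ld{M}$-isomorphism $\cU_{\si}^{k}$.
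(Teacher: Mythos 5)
Your proposal is correct and follows essentially the same route as the paper: apply Proposition~\ref{p: interpolation 1} with the power of $\cU_{\si}$ (you use $\cU_{\si}^{-k}$, the paper uses $\cU_{\si}^{k}$, a purely cosmetic difference in direction), combine with the identity $\bigl(\hu{M},\ld{M}\bigr)_{[\te]}=\lp{M}$ — which is exactly \cite[Thm~7.4]{CMM1}, the input you correctly suspected would be needed — and with Proposition~\ref{p: properties of J}~\rmi; then get~\rmii\ by duality. Your filled-in duality argument, including the density check, is the natural expansion of the paper's one-line invocation of ``the duality theorem.''
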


\begin{proof}
To prove \rmi, we first observe that $\cU_{\si}^k$ is an isomorphism of
$\hu{M} + \ld{M}$ onto $\Xhs{M} + \ld{M}$.
Then we may apply Proposition~\ref{p: interpolation 1} 
with $\cU_{\si}^k$ in place
of $\cT$, $X^0 = \hu{M}$, $Y^0 = \Xhs{M}$, $X^1 = \ld{M} = Y^1$.  By
\cite[Thm~7.4]{CMM1}
$$
\bigl(\hu M , \ld M\bigr)_{[\te]} = \lp M.
$$
By Proposition~\ref{p: interpolation 1}, the restriction 
of $\cU_{\si}^k$ to $\lp{M}$ is an isomorphism between $\lp{M}$ and 
$\bigl(\Xhs M , \ld M\bigr)_{[\te]}$.  But the restriction 
of $\cU_{\si}^k$ to $\lp{M}$ is just $\cU_{\si,p}^k$, which is an
isomorphism of $\lp{M}$ by Proposition~\ref{p: properties of J}.  Hence 
$\bigl(\Xhs M , \ld M\bigr)_{[\te]}$ and $\lp{M}$ are
isomorphic Banach spaces, as required.

Now \rmii\ follows from \rmi\ by the duality theorem.
\end{proof}

\section{New Hardy spaces on manifolds}
\label{s: Hardy on manifolds}

Suppose that $M$ is a connected $n$-dimensional Riemannian manifold
of infinite volume with Riemannian measure $\mu$.

\begin{Basic assumptions} \label{Ba: on M}
We make the following assumptions on $M$:
\begin{enumerate}
\item[\itemno1] $b>0$;
\item[\itemno2]
$\Ric \geq -\kappa^2$ for some positive $\kappa$ 
and the injectivity radius is positive.
\end{enumerate}
\end{Basic assumptions}

\begin{remark} \label{rem: unif ball size cond}
It is well known that manifolds with properties \rmi-\rmii\ above
satisfy the \emph{uniform ball size condition}, i.e., 
$$
\inf\, \bigl\{ \mu\bigl(B(p, r)\bigr): p \in M \bigr\} > 0
\qquad\hbox{and}\qquad
\sup\, \bigl\{ \mu\bigl(B(p, r)\bigr): p \in M \bigr\} < \infty.
$$
See, for instance, \cite{CMP}, where complete references are given.
\end{remark}

\noindent
Note that manifolds satisfying the assumptions above
also satisfy the Basic assumptions~\ref{Ba: NH}.
Indeed, every length metric space satisfies the \emph{approximate midpoint
property} (AM), and, by standard comparison theorems \cite[Thm~3.10]{Ch},
the measure $\mu$ is \emph{locally doubling}.  
Furthermore, it is known \cite[Section~8]{CMM1}
that for manifolds with Ricci curvature bounded
from below the assumption $b>0$ is equivalent to 
the \emph{isoperimetric property} (I).
Finally, the heat semigroup $\{\cH^t\}$ possesses the properties
\rmiv~(a)--(c) of the Basic Assumptions~\ref{Ba: NH} \cite{Gr}.

In this section we complement the theory developed
in Section~\ref{s: New Hardy} by proving
that the spaces $\Xhs{M}$ and~$\Yhs{M}$, in fact, do not depend
on $\si$ as long as $\si > \be^2-b$
(see Theorem~\ref{t: fundamental properties}).
Our main tool for proving this is a 
$\hu{M}$ boundedness result, of independent interest, for functions
of the Laplace--Beltrami operator on $M$ (Theorem~\ref{t: lim huno}),
which will also play an important role in the proof of Theorem~\ref{t: 
multiplier 2}.

Recall that $-\cL$, $b$ and $\be$ denote the Laplace--Beltrami operator
on $M$, the bottom of the $\ld{M}$ spectrum of $\cL$,
and the exponential rate of growth
of the volume of geodesic balls (see (\ref{f: volume growth})) respectively.
By a result of Brooks \cite{Br} $b\leq \be^2$.
Further, denote by $\de$
a nonnegative number such that the following 
ultracontractive estimate \cite[Section~7.5]{Gr} holds
\begin{equation} \label{f: special}
\bigopnorm{\cH^t}{1;2}
\leq C\, \e^{-bt} \, t^{-n/4} \, (1+t)^{n/4-\de/2}  \quant t \in \BR^+.
\end{equation}
First we define an appropriate function space of holomorphic functions
which will be needed in the statement of Theorem~\ref{t: lim huno}.  

\begin{definition}  \label{d: Hormander at infinity}
Suppose that $J$ is a positive integer and that $W$
is in $\BR^+$.
Denote by~$\bS_{W}$ the strip $\{ \zeta \in \BC: \Im (\zeta)
\in (-W,W)\}$ and by $H^\infty (\bS_{W};J)$ the vector space of 
all bounded \emph{even} holomorphic functions~$f$ in $\bS_{W}$ for which
there exists a positive constant $C$ such that
\begin{equation} \label{f: SsigmaJ}
\mod{D^j f(\zeta)} 
\leq {C} \, {(1+\mod{\zeta})^{-j}}
\quant \zeta\in \bS_{W} \quant j \in \{0,1,\ldots,J\}.
\end{equation}
We denote by $\norm{f}{\bS_{W};J}$ the infimum of all constants $C$ for which (\ref{f: SsigmaJ}) holds.
\end{definition}

\begin{notation*}
{For the sake of notational simplicity, we denote by $\cD$ the
operator $\sqrt{\cL-b}$.}
\end{notation*}

\begin{theorem} \label{t: lim huno}
Assume that $\al$ and $\be$ are as in (\ref{f: volume growth}),
and $\de$ as in (\ref{f: special}). 
Denote by $N$ the integer $[\!\![n/2+1]\!\!] +1$.  
Suppose that $J$ is an integer $>\max\, \bigl(N+2+\al/2-\de,
N + 1/2\bigr)$.
Then there exists a constant $C$ such that 
$$
\opnorm{m(\cD)}{H^1}
\leq C \, \norm{m}{\bS_{\be};J}
\quant m\in H^\infty\bigl(\bS_{\be};J\bigr).
$$
\end{theorem}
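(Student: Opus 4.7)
The plan is to show that $m(\cD)$ maps every $H^1$-atom $a$ to a function with $\hu M$-norm bounded by $C\,\norm{m}{\bS_\be;J}$, uniformly in $a$; boundedness on $\hu M$ then follows from the atomic decomposition together with a standard density argument. Since $m$ is even, the spectral theorem provides the Fourier--cosine representation
\begin{equation*}
m(\cD) = \frac{1}{\pi}\int_0^\infty \wh m(t)\,\cos(t\cD)\wrt t,
\end{equation*}
and Paley--Wiener, applied to $m$ on the strip $\bS_\be$, combined with the Mihlin-type bounds~(\ref{f: SsigmaJ}), yields weighted $\ld{\BR}$-bounds for $\wh m$ and its first $J$ derivatives, the weight being $\e^{\be\mod t}$.

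I would fix a smooth dyadic partition of unity $1 = \psi + \sum_{k\geq 0}\phi_k$ on $[0,\infty)$ in the spectral variable $\la$, with $\psi$ supported in $[0,2]$ and each $\phi_k$ supported in $[2^{k-1},2^{k+1}]$, which yields the splitting $m(\cD) = (\psi m)(\cD) + \sum_{k\geq 0}(\phi_k m)(\cD)$. Fix an atom $a$ supported on a ball $B$ with $r_B\leq s_0$. For the low-frequency piece, $\psi m$ has inverse Fourier transform whose decay matches that of $\wh m$ on $\bS_\be$, so finite propagation speed of the wave equation combined with a straightforward annular decomposition realises $(\psi m)(\cD)a$ as a sum of $H^1$-atoms with controlled $\ell^1$-coefficients.

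For the $k$-th dyadic piece, finite propagation of the wave equation combined with integration by parts $J$ times in the Fourier representation, and with the ultracontractive bound~(\ref{f: special}) used to exchange an $\lu M$-bound on $\cos(t\cD)a$ for an $\ld M$-bound, produces for each $\ell\geq 1$ an estimate of the form
\begin{equation*}
\norm{\One_{B(c_B,2^\ell)^c}\,(\phi_k m)(\cD)a}{2} \leq C\,\norm{m}{\bS_\be;J}\,\mu(B)^{-1/2}\,2^{-kJ}\,\e^{-\be 2^\ell}\,2^{-\ell(J-\al/2-n/2+\de)}.
\end{equation*}
Covering successive annuli $B(c_B,2^{\ell+1})\setminus B(c_B,2^\ell)$ by balls of radius $s_0$, using Remark~\ref{rem: unif ball size cond}, and then subtracting averages to obtain zero-mean pieces, exhibits $(\phi_k m)(\cD)a$ as a sum of $H^1$-atoms with $\ell^1$-coefficients summable in both $k$ and $\ell$ precisely under the hypothesis $J > \max(N+2+\al/2-\de,N+1/2)$.

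The main obstacle is the exact matching of exponential rates: the Paley--Wiener decay $\e^{-\be\mod t}$ of $\wh m$ has the same exponent as the volume growth $\e^{2\be r}$ of geodesic balls in~(\ref{f: volume growth}), so these two cancel only to leading order. The entire argument therefore turns on absorbing the residual polynomial factors of orders $\al/2$ (from volume growth), $\de$ (from the ultracontractive bound), and the Sobolev exponent $N\sim n/2$ (needed to pass from $\ld M$-bounds for $\cos(t\cD)a$ on small balls to pointwise kernel estimates) through the $J$-fold differentiability of $m$. The thresholds $J > N + 2 + \al/2 - \de$ and $J > N + 1/2$ in the hypothesis are calibrated exactly so that, after extracting a factor $2^{-kJ}$ from the $J$ derivatives of $m$ and the parallel gain from the propagation cut-off, the residual dyadic series in both $k$ and $\ell$ converges.
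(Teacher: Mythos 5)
Your plan rests on a dyadic decomposition in the \emph{spectral} variable $\la$: a partition $1 = \psi + \sum_{k}\phi_k$ with $\phi_k$ smooth and supported in $[2^{k-1},2^{k+1}]$, applied to $m$, followed by an annular decomposition in space using finite propagation speed. The key intermediate estimate you claim,
$$
\norm{\One_{B(c_B,2^\ell)^c}\,(\phi_k m)(\cD)a}{2}
\leq C\,\norm{m}{\bS_\be;J}\,\mu(B)^{-1/2}\,2^{-kJ}\,\e^{-\be 2^\ell}\,2^{-\ell(J-\al/2-n/2+\de)},
$$
cannot be obtained by this route. The factor $\e^{-\be 2^\ell}$ is essential --- it must beat the exponential volume growth $\mu\bigl(B(c_B,2^\ell)\bigr)^{1/2}\sim\e^{\be 2^\ell}(2^\ell)^{\al/2}$ --- and it can only come from the decay $\mod{\wh m(t)}\lesssim\mod{t}^{-J}\e^{-\be\mod t}$ of Lemma~\ref{l:splitf}, which in turn comes from the holomorphicity of $m$ in the strip $\bS_\be$. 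But $\phi_k m$ is not holomorphic: multiplication by a compactly supported cut-off in $\la$ destroys the strip extension, and with it the Paley--Wiener decay. Concretely, $\wh{(\phi_k m)}=\wh\phi_k\ast\wh m$, and since $\wh\phi_k$ is Schwartz but only polynomially decaying, the convolution has only Schwartz (polynomial) decay at infinity even though $\wh m$ decays exponentially. Integrating by parts $J$ times in the Fourier representation, as you propose, only turns the smoothness of $\phi_k m$ into polynomial decay $\mod{\wh{(\phi_k m)}(t)}\lesssim 2^{k(1-J)}\mod{t}^{-J}$, which is not enough. So your estimate, as written, is false, and the convergence of the resulting double series fails.

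This is exactly why the paper decomposes in the opposite variable. In Steps~III--IV of Section~\ref{subsec: Hone}, the cut-offs $\om$ and $\om_j$ are applied to $\wh m$ on the Fourier side (the ``time'' variable $t$), yielding pieces $\om\wh m$, $\om_j\wh m$ supported in $[-1,1]$ and in $\{j-3/4\leq\mod t\leq j+3/4\}$ respectively. Because these restrictions keep the Lemma~\ref{l:splitf} bound $\mod{\wh m(t)}\lesssim\mod t^{-J}\e^{-\be\mod t}$ intact pointwise, the piece $T_j(\cD)a$ automatically carries the factor $\e^{-\be j}$ (see the estimate preceding Lemma~\ref{l: economical decomposition} in Step~IV), while finite propagation speed gives the compact support $B(p,j+1)$. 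Lemma~\ref{l: economical decomposition} then trades the big ball for atoms in $\cB_1$, and Proposition~\ref{p: Mean} supplies the cancellation. Your sketch also glosses over these last two ingredients: the cancellation for the pieces cannot be fixed simply by ``subtracting averages'' locally (that produces non-atomic corrections), and the reduction to balls of radius $\leq 1$ requires the quantitative ``transport of charges'' estimate $\norm{a}{H^1}\leq C\,r_B$ of Lemma~\ref{l: economical decomposition}, which you would still need. The essential repair, however, is to replace the spectral Littlewood--Paley decomposition of $m$ by the time-side decomposition of $\wh m$.
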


\noindent
We emphasise 
that the width of the strip in Theorem~\ref{t: lim huno}
is best possible as the case of symmetric spaces of the noncompact 
type shows \cite{CS}. 
Note that if $M$ is a symmetric space
of the noncompact type with rank $r$ and $\cH^t$ denotes the semigroup 
associated to the Killing metric, then $\de$ is equal to 
the sum of $r/2$ and the cardinality of the positive indivisible restricted roots
\cite[Thm~3.2~\rmiii]{CGM}, and $\al = (r-1)/2$.  Thus, in this
case, we need only to assume $J>N+1/2$ in Theorem~\ref{t: lim huno}. 
Our result may be compared with \cite[Corollary~B.3]{T2}, where the
author proved, under much stronger curvature assumptions on $M$,
that if $m$ is in the symbol class $\cS_{\be^2}^0$, then $m(\cD)$
maps the Goldberg type space $\frh^1(M)$ to $\lu{M}$ and
$\ly{M}$ into $\frb\frm\fro(M)$.

The proof of Theorem~\ref{t: lim huno} is fairly technical and will be given
in Section~\ref{s: Riemannian manifolds}.   
An important consequence of Theorem~\ref{t: lim huno} is that, for fixed $k$,
the spaces $\Xhs{M}$ do not depend on the parameter $\si$, as $\si$
varies in $(\be^2-b,\infty)$.

\begin{theorem} \label{t: fundamental properties}
The following hold:
\begin{enumerate}
\item[\itemno1]
if $\si_1$ and $\si_2$ are in $(\be^2-b,\infty)$, 
then $X_{\si_1}^k(M)$ and $X_{\si_2}^k(M)$ agree as vector spaces, 
and their norms are equivalent;
\item[\itemno2]
if $\si$ is in $(\be^2-b,\infty)$, then
$
\hu{M} \supset X_{\si}^1(M) \supset X_{\si}^2(M) \supset \cdots
$
with continuous inclusions;
\item[\itemno3]
the inclusions in \rmii\ are proper.  
\end{enumerate}
\end{theorem}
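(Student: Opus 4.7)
The plan hinges on recognising each $\cU_\si$, and every composition $\cU_{\si_2}^{-k}\cU_{\si_1}^k$, as a single spectral multiplier of $\cD := \sqrt{\cL - b}$, and invoking Theorem~\ref{t: lim huno}; (iii) will be reduced to the non-surjectivity of $\cU_\si$ on $\hu{M}$.

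For (i), I would compute
\begin{equation*}
\cU_{\si_2}^{-k}\cU_{\si_1}^k = n(\cD),\qquad n(\zeta) := \left(\frac{\si_1 + b + \zeta^2}{\si_2 + b + \zeta^2}\right)^k,
\end{equation*}
and check that $n \in H^\infty(\bS_\be; J)$. The function $n$ is even and bounded, with derivatives satisfying the required decay \eqref{f: SsigmaJ} by direct differentiation; holomorphicity in $\bS_\be$ reduces to $\si_2 + b + \zeta^2 \neq 0$ there, which holds because the only zeros $\pm i\sqrt{\si_2 + b}$ lie outside $\bS_\be$ by the hypothesis $\si_2>\be^2 - b$. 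Theorem~\ref{t: lim huno} gives $\cU_{\si_2}^{-k}\cU_{\si_1}^k \in \cB(\hu{M})$, and passing through the defining isometries $\cU_{\si_i}^{-k}: X_{\si_i}^k(M)\to\hu{M}$ yields the continuous inclusion $X_{\si_1}^k(M)\hookrightarrow X_{\si_2}^k(M)$; swapping $\si_1\leftrightarrow\si_2$ proves (i). For (ii), the same argument applied to $m_\si(\zeta) := (b + \zeta^2)/(\si + b + \zeta^2)$ shows $\cU_\si\in\cB(\hu{M})$, whence for $f\in X_\si^{k+1}(M)$,
\begin{equation*}
\norm{f}{X_\si^k} = \bignorm{\cU_\si\bigl(\cU_\si^{-(k+1)}f\bigr)}{\hu{M}} \le \opnorm{\cU_\si}{\hu{M}}\,\norm{f}{X_\si^{k+1}};
\end{equation*}
the $k=0$ case (with the convention $X_\si^0(M) := \hu{M}$) covers $X_\si^1(M)\hookrightarrow\hu{M}$.

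For (iii), the injectivity of $\cU_\si^k$ on $\lu{M}+\ld{M}$ (Proposition~\ref{p: properties of J}~\itemno2) implies that every inclusion in the chain is strict iff the first one is, iff $\cU_\si:\hu{M}\to\hu{M}$ fails to be surjective. Indeed, given $g\in \hu{M}\setminus\cU_\si\hu{M}$, the iterates $\cU_\si^k g\in X_\si^k(M)\setminus X_\si^{k+1}(M)$ witness the strict inclusions, since $\cU_\si^k g \in X_\si^{k+1}(M)$ would force $\cU_\si^{-1}g = \cU_\si^{-(k+1)}(\cU_\si^k g)\in\hu{M}$. To rule out surjectivity I would argue by contradiction: if $\cU_\si$ were surjective on $\hu{M}$, the open mapping theorem would make it a topological isomorphism, and by duality (using $\cU_\si^*=\cU_\si$ on $\ld{M}$) also a topological isomorphism of $BMO(M)$ modulo constants. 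I would then exhibit a non-constant bounded harmonic function $\phi$ on $M$: since $\cH^t\phi = \phi$ for every $t>0$, one has $(\si\cI+\cL)^{-1}\phi = \phi/\si$ and hence $\cU_\si\phi = 0$ pointwise, producing a non-zero element of $\ker\cU_\si$ in $BMO(M)/\mathrm{const}$, contradicting the required injectivity.

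The main obstacle is producing such a $\phi$ from Basic Assumptions~\ref{Ba: on M} alone. Existence of non-constant bounded harmonic functions is classical for noncompact symmetric spaces (Poisson integrals of boundary data), for nonamenable connected unimodular Lie groups, and for Cartan--Hadamard manifolds with pinched negative sectional curvature (Anderson--Schoen); but it is not automatic under the weaker hypotheses of positive spectral gap, Ricci bound below, and positive injectivity radius. In the fully general case, I would either strengthen the hypotheses, or replace the bounded harmonic function by a suitable $\ly{M}$ quasi-mode of $\cH^t$ at the bottom of the spectrum obtained via a spectral-projection or Martin-boundary construction.
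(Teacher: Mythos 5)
Your arguments for \itemno1\ and \itemno2\ are essentially those of the paper: both reduce the statements to the boundedness on $\hu{M}$ of a spectral multiplier of $\cD$, verified via Theorem~\ref{t: lim huno}. (A small slip: $\cU_{\si_2}^{-k}\cU_{\si_1}^{k} = (\si_2\cI+\cL)^{k}(\si_1\cI+\cL)^{-k}$, so the quotient in your $n(\zeta)$ should be inverted; this does not affect anything since you symmetrise in $\si_1,\si_2$ afterwards.)

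For \itemno3\ there is a genuine gap, and you have correctly located it yourself. Your reduction to the non-surjectivity of $\cU_\si\colon\hu{M}\to\hu{M}$ via the injectivity from Proposition~\ref{p: properties of J}~\itemno2\ is sound, but the contradiction argument then requires a non-constant bounded harmonic function, or some surrogate, and the Basic assumptions~\ref{Ba: on M} (Ricci bounded below, positive injectivity radius, $b>0$) do not guarantee one. Moreover the duality step is delicate: passing from surjectivity of $\cU_\si$ on $\hu{M}$ to an isomorphism of $BMO(M)$ modulo constants, and then interpreting $(\si\cI+\cL)^{-1}\phi=\phi/\si$ for $\phi\in\ly{M}$, requires giving meaning to $\cR_\si$ on $BMO(M)$, which is not done in the paper. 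The paper instead produces an explicit witness: fix $\psi\in C_c^\infty(M)$ with $\int_M\psi\,d\mu\neq 0$ and consider $f=\cL^{k+1}\psi$. Then $\cU_\si^{-k}f=(\si\cI+\cL)^{k}\cL\psi$ is a finite linear combination of $\cL^{j}\psi$ with $j\ge 1$ — each compactly supported, in $\ld{M}$, with vanishing integral — hence a multiple of an $H^1$-atom, so $f\in\Xhs{M}$. On the other hand $\cU_\si^{-(k+1)}f=(\si\cI+\cL)^{k+1}\psi$ contains the term $\si^{k+1}\psi$, whose integral does not vanish; since every element of $\hu{M}$ has zero integral, $\cU_\si^{-(k+1)}f\notin\hu{M}$ and $f\notin X_\si^{k+1}(M)$. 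This avoids any surjectivity or harmonic-function considerations and works under the stated hypotheses.
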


\begin{proof}
First we prove \rmi.  Consider the operator $\cT_{\si_1,\si_2}$,
defined on $\ld{M}$ by 
$$
\cT_{\si_1,\si_2}
= \cU_{\si_1}^{-1}\, \cU_{\si_2}. 
$$
Since both $\cU_{\si_1}$ and $\cU_{\si_2}$ are isomorphisms on $\ld{M}$,
so are $\cT_{\si_1,\si_2}$ and $\cT_{\si_1,\si_2}^{-1}$.  
Observe that 
the operators
$\cT_{\si_1,\si_2}$ and 
$\cT_{\si_1, \si_2}^{-1}$ are bounded on $\hu{M}$.  
Indeed,
$$
\cT_{\si_1,\si_2}
= (\si_1\, \cI + \cL) \, (\si_2 \, \cI+ \cL)^{-1} 
= (\si_1-\si_2) \, (\si_2 \, \cI+\cL)^{-1} + \cI.
$$
Hence the boundedness of $\cT_{\si_1,\si_2}$
on $\hu{M}$ is equivalent to that of $(\si_2 \, \cI+\cL)^{-1}$.
To prove that $(\si_2 \, \cI+\cL)^{-1}$
is bounded on $\hu{M}$, it suffices to check that the associated
spectral multiplier $\zeta \mapsto (\si + b + \zeta^2)^{-1}$
satisfies the hypotheses of Theorem~\ref{t: lim huno}.  We omit
the details of this calculation. 
A similar argument shows that 
$\cT_{\si_1,\si_2}^{-1}$ is bounded on $\hu{M}$.  

Thus, $\cT_{\si_1,\si_2}$ is an isomorphism of $\hu{M}$.
Since $\cU_{\si_1}\cT_{\si_1,\si_2} \cU_{\si_2}^{-1} = \cI$,
the identity is an isomorphism between $X_{\si_1}^1(M)$ and $X_{\si_2}^1(M)$,
as required to conclude
the proof of \rmi\ in the case where $k=1$.  The proof in the
case where $k\geq 2$ is similar, and is omitted. 

\medskip
Note that \rmi\ is equivalent to the boundedness of
$\cU_\si$ on $\hu{M}$.  Since $\cU_{\si} = \cI-\si\, (\si\cI+\cL)^{-1}$,
it suffices to prove that the resolvent operator $(\si\cI+\cL)^{-1}$
is bounded on $\hu{M}$.  This has already been done in the proof
of \rmi,  and \rmii\ follows.  

\medskip
Finally we prove \rmiii.  Choose a function $\psi$ in $C_c^\infty(M)$
with nonvanishing integral.  Observe that $\cL \psi$ is a multiple
of a $H^1$-atom, hence it is in $\hu{M}$.  

We shall prove that $\cL^{k+1} \psi$ is in $\Xhs{M}
\setminus X_{\si}^{k+1}(M)$.  Indeed, on the one hand
$$
\cU_{\si}^{-k} \bigl(\cL^{k+1} \psi\bigr)
= (\si\cI + \cL)^k \bigl(\cL \psi\bigr),
$$
which again is a multiple of an $H^1$-atom, hence is in $\hu{M}$.
On the other hand
$$
\cU_{\si}^{-(k+1)} \bigl(\cL^{k+1} \psi\bigr)
= (\si\cI + \cL)^{k+1} (\psi),
$$
which may be written as a linear combination of $\psi$
and of terms of the form $\cL^j \psi$ with $j$ in $\{1,\ldots,k+1\}$.
Therefore the integral of 
$\cU_{\si}^{-(k+1)} \bigl(\cL^{k+1} \psi\bigr)$ does not
vanish, hence it is not in $\hu{M}$ and $\cL^{k+1} \psi$
is not in $X_{\si}^{k+1}(M)$, as required. 
\end{proof}

\begin{definition} \label{def: space Xk}
Suppose that $k$ is a positive integer.  The space $X_{\be^2}^k(M)$
will be denoted simply by $\Xh{M}$.  
\end{definition}

\noindent
By Theorem~\ref{t: fundamental properties},  
for any $\si$ in $(\be^2-b,\infty)$ and each positive integer $k$ we have that
$\Xh{M} = \Xhs{M}$ as vector spaces, and their norms
are equivalent. 

\begin{remark}
The space $\Xh{M}$ may be characterised
as the image of $\hu{M}$ under a wider class of maps.  This is done
in \cite[Subection 4.6]{MMV4}. We briefly describe the result. 

For each positive $\vep$ there exists a function $\eta$ in $C_c(\BR)$ such that
the only zeroes of $1-\wh\eta$ in $\OV{\bS}_{\be+\vep}$ are
the points $\pm i \sqrt b$ (here $\wh\eta$ denotes the Fourier transform of $\eta$).
Suppose that $k$ is a positive integer.  
Denote by $\Jeta$ the operator $\cI-\wh\eta(\cD)$.  The following hold:
\begin{enumerate}
\item[\itemno1]
the map $\Jetah$ is injective on $\lu{M}$;
\item[\itemno2]
$\Jetah \hu{M} = \Xh{M}$ as vector spaces, and the norm
on $\Xh{M}$, defined by
$$
\norm{f}{\eta,k}
= \norm{\Jetamenoh f}{H^1} 
\quant f \in \Xh{M}, 
$$
is equivalent to the norm of $\Xh{M}$.
\end{enumerate}

\end{remark}

\section{Main results}
\label{s: Spectral multipliers on Riemannian manifolds}

In this section we state and prove 
boundedness results for strongly singular spectral multipliers
and first order Riesz transform associated to the Laplace--Beltrami
operator on complete connected Riemannian manifolds $M$ satisfying the
Basic assumptions~\ref{Ba: on M}.   

We recall that in Definition~\ref{d: Hormander at infinity}
we introduced the space $H^\infty (\bS_{W};J)$ 
of functions that are holomorphic and bounded, together 
with their derivatives up to the order $J$,
in the strip $\bS_W$, and satisfy a Mihlin-type condition at infinity.
Here, to deal with a wider class of operators, we define a larger
space of functions that may be singular also
at the points $\pm iW$.

\begin{definition}  \label{defi: HinftySbetakappa}
Suppose that $J$ is a positive integer, that 
$\tau$ is in $[0,\infty)$, and that~$W$ is in~$\BR^+$.
The space $H (\bS_{W};J,\tau)$ is the vector space of 
all holomorphic \emph{even} functions $f$ in the strip 
$\bS_{W}$ for which
there exists a positive constant $C$ such that
\begin{equation} \label{f: Ssigmakappa}
\mod{D^j f(\zeta)} \leq {C} \, 
\max\bigl(\mod{\zeta^2+W^2}^{-\tau-j},\mod{\zeta}^{-j}\bigr)
\quant \zeta \in \bS_{W} \quant j \in \{0,1,\ldots,J\}.
\end{equation}
We denote by~$\norm{f}{\bS_{W};J,\tau}$ the infimum of all constants $C$ for which (\ref{f: Ssigmakappa}) holds.
\end{definition}

\noindent
Note that, for each fixed $j$, the right-hand side of 
(\ref{defi: HinftySbetakappa}) is infinite of order $-\tau-j$
at $\pm iW$, and vanishes of order $j$ at infinity.  
Thus, if $\tau=0$, and $f$ is in $H (\bS_{W};J,\tau)$, then 
$f$ satisfies Mihlin-type conditions both near the points $\pm iW$
and at infinity.  In particular, 
the derivatives of $f$ may be unbounded in any 
neighbourhood of $iW$, and of $-iW$. 
Finally, if $\tau$ is in $\BR^+$, and $f$ is in $H (\bS_{W};J,\tau)$,
then both $f$ and its derivatives up to the order $J$ may be unbounded in any 
neighbourhood of $iW$, and of $-iW$. 

\begin{remark}
An interesting example of a function in $H(\bS_{\be};J,\tau)$ is 
$$
m(\zeta)
= (\zeta^2+\beta^2)^{-iu-\tau} \, (\zeta^2+\beta^2+1)^{\tau},
$$
where $\tau$ is in $[0,\infty)$.  Note that if $b= \be^2$, then 
$m(\cD) = \cL^{-iu-\tau} \, (\cL+\cI)^{\tau}$.
It is worth observing that there are no endpoint results
at $p=1$ for this operator in the literature when $\tau >1$.
In the case where $M$ is a symmetric space of the noncompact type, 
it is known \cite{A1,AJ,MV} that $m(\cD)$ is of weak type $1$ if and only if
$\tau \leq 1$, but the proof of this fact uses the spherical
Fourier transform and very specific information on the structure
of the symmetric space, and it is hardly extendable.   
\end{remark}

\begin{theorem} \label{t: multiplier 2}
Assume that $\al$ and $\be$ are as in (\ref{f: volume growth}),
and $\de$ as in (\ref{f: special}).  
Suppose that $\tau$ is in $[0,\infty)$, that 
$J$ and $k$ are integers, with $k> \tau+J$
and $J > \max\, \bigl(N+ 2+\al/2-\de, N + 1/2\bigr)$,
where $N$ denotes the integer $[\!\![n/2+1]\!\!] +1$.
The following hold:
\begin{enumerate}
\item[\itemno1]
if $b < \be^2$, then there exists a constant $C$ such that 
$$
\opnorm{m(\cD)}{H^1;L^1} 
\leq C \,  \norm{m}{\bS_{\be;J,\tau}}
\quant m \in H (\bS_{\be};J,\tau)
$$
and 
$$
\opnorm{m(\cD)^t}{L^\infty;BMO} 
\leq C \,  \norm{m}{\bS_{\be;J,\tau}}
\quant m \in H (\bS_{\be};J,\tau),
$$
where $m(\cD)^t$ denotes the transpose operator of $m(\cD)$;
\item[\itemno2]
if $b = \be^2$, then there exists a constant $C$ such that 
$$
\opnorm{m(\cD)}{X^k;H^1} 
\leq C \,  \norm{m}{\bS_{\be;J,\tau}}
\quant m \in H (\bS_{\be};J,\tau)
$$
and 
$$
\opnorm{m(\cD)^t}{BMO;Y^k} 
\leq C \,  \norm{m}{\bS_{\be;J,\tau}}
\quant m \in H (\bS_{\be};J,\tau),
$$
where $m(\cD)^t$ denotes the transpose operator of $m(\cD)$.
\end{enumerate}
\end{theorem}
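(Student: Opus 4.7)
The plan is to reduce both parts of the statement to Theorem~\ref{t: lim huno} by a common device: multiplying $m$ by a holomorphic Mihlin-type function on $\bS_\be$ that vanishes at $\pm i\be$ fast enough to neutralise the singularity of $m$ there. Fix $\si>0$ and set
$$
v(\ze)=\frac{\ze^2+\be^2}{\si+\ze^2+\be^2}.
$$
Since $\Re(\si+\ze^2+\be^2)\geq\si$ throughout $\bS_\be$, $v$ is holomorphic and bounded on $\bS_\be$, vanishes to order one at $\pm i\be$, and satisfies $\bigmod{D^jv(\ze)}\leq C_j\,(1+\mod\ze)^{-j}$ for every $j$. Thus $v^k\in H^\infty(\bS_\be;J)$ for every~$J$ and every positive integer~$k$. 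By the spectral theorem,
$$
v(\cD)=\tilde\cL\,(\si\cI+\tilde\cL)^{-1},\qquad \tilde\cL:=\cL+(\be^2-b),
$$
which reduces to $\cU_\si$ precisely in case \rmii, where $b=\be^2$.

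The key step is the \emph{cancellation estimate}: if $k\geq\tau+J$, then $mv^k\in H^\infty(\bS_\be;J)$ with $\norm{mv^k}{\bS_\be;J}\leq C\norm{m}{\bS_\be;J,\tau}$. This is a direct application of Leibniz's rule. Near $\pm i\be$, the hypothesis on $m$ gives $\bigmod{D^im(\ze)}\leq C\bigmod{\ze^2+\be^2}^{-\tau-i}$, while $v^k$ has a zero of order~$k$ at these points, so $\bigmod{D^{j-i}v^k(\ze)}\leq C\bigmod{\ze^2+\be^2}^{k-(j-i)}$; the product has size $\bigmod{\ze^2+\be^2}^{k-\tau-j}$, which is bounded since $k\geq\tau+j$. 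At infinity in $\bS_\be$ the Mihlin bounds on $m$ and on $v^k$ multiply to give the required $(1+\mod\ze)^{-j}$ bound. Theorem~\ref{t: lim huno} then yields a bounded operator $(mv^k)(\cD)$ on $\hu{M}$ with norm at most $C\norm{m}{\bS_\be;J,\tau}$.

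In case \rmii, $v(\cD)=\cU_\si$, and by Definition~\ref{def: Hardy space} together with Theorem~\ref{t: fundamental properties} the map $\cU_\si^k\colon\hu{M}\to\Xh{M}$ is an isomorphism of Banach spaces. Given $f\in\Xh{M}$, write $f=\cU_\si^k g$ with $g\in\hu{M}$ and $\norm{g}{H^1}\leq C\norm{f}{X^k}$; the spectral identity $m(\cD)f=(mv^k)(\cD)g$ and the previous paragraph give $\norm{m(\cD)f}{H^1}\leq C\norm{m}{\bS_\be;J,\tau}\norm{f}{X^k}$. In case \rmi, $\tilde\cL=\cL+(\be^2-b)$ has spectrum contained in $[\be^2,\infty)$, and the Laplace representation
$$
\tilde\cL^{-1}=\ioty\e^{-t(\be^2-b)}\,\cH^t\wrt t,
$$
combined with the $\lu{M}$-contractivity of $\cH^t$, yields $\bignorm{\tilde\cL^{-1}f}{1}\leq(\be^2-b)^{-1}\norm{f}{1}$. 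Hence $v^{-1}(\cD)=\cI+\si\,\tilde\cL^{-1}$, and by iteration $v^{-k}(\cD)$, is bounded on $\lu{M}$. Factoring $m(\cD)=v^{-k}(\cD)\,(mv^k)(\cD)$ and using the continuous inclusion $\hu{M}\hookrightarrow\lu{M}$ gives $\norm{m(\cD)f}{1}\leq C\norm{m}{\bS_\be;J,\tau}\norm{f}{H^1}$. The two $BMO$ inequalities follow by dualising, via the identifications $(\hu{M})^*=BMO(M)$, $(\Xh{M})^*=\Yh{M}$ and $(\lu{M})^*=\ly{M}$.

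The heart of the argument is the cancellation estimate of the second paragraph: one must choose a single taming factor $v$ whose zeros at $\pm i\be$ exactly compensate the singularities of $m$ of order $\tau+j$ after $j$ derivatives, while $v$ and all its powers remain Mihlin-type multipliers on the \emph{entire} strip $\bS_\be$ of maximal width. Once this is secured, case~\rmii\ is essentially the definition of $\Xh{M}$, and case~\rmi\ is reduced to an elementary $\lu{M}$ resolvent estimate that exploits the strict inequality $b<\be^2$.
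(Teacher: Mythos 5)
Your argument is correct and follows essentially the same route as the paper's proof: introduce a taming factor vanishing at $\pm i\be$ (your $v$ with a free parameter $\si>0$; the paper uses the specific choices $\si=b$ in case \rmi\ and $\si=\be^2$ in case \rmii, which coincide when $b=\be^2$), verify by Leibniz's rule that the product $mv^k$ lands in $H^\infty(\bS_\be;J)$ so that Theorem~\ref{t: lim huno} applies, read off case \rmii\ from the isometry $\cU^{-k}_{\be^2}\colon X^k\to H^1$, and in case \rmi\ absorb $v^{-k}(\cD)$ into an $\lu{M}$-bounded resolvent factor using $b<\be^2$. The only cosmetic differences are the free choice of $\si$ and your explicit Laplace representation of $\tilde\cL^{-1}$ (the paper simply cites the $L^1$-spectral theory to invert $\wt\cU$), and the fact that your cancellation estimate needs only $k\ge\tau+J$, which is weaker than the stated hypothesis and therefore harmless.
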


\begin{proof}
First we prove \rmi.  Consider the map $\wt\cU$, defined by
$$
\wt\cU
= \bigl[\cL+(\be^2-b)\cI\bigr] \, (\be^2\cI + \cL)^{-1}. 
$$
Observe that $\wt\cU = \cI - b \, (\be^2 \cI+ \cL)^{-1}$ extends
to a bounded operator on $\lu{M}$, because the $\lu{M}$-spectrum
of $\cL$ is contained in the right half-plane.  Similarly,
the operator $\cI + b \, [(\be^2-b)\cI+ \cL]^{-1}$
extends to a bounded operator on $\lu{M}$; it is straightforward
to check that this operator is the inverse of $\wt\cU$ on $\lu{M}$.
Thus, $\wt\cU$ is an isomorphism of $\lu{M}$, and so is $\wt\cU^k$.  

Consequently, $m(\cD)$ is bounded from $\hu{M}$ to $\lu{M}$
if and only if $\wt\cU^k m(\cD)$ is bounded~from 
$\hu{M}$ to $\lu{M}$.  Observe that
$\wt\cU^k m(\cD) = u_k(\cD)$, where 
$$
u_k(\zeta)
= \Bigl(\frac{\zeta^2+\be^2}{\zeta^2+b+\be^2}\Bigr)^k \, m(\zeta).
$$
It is straightforward to check that 
there exists a constant $C$ such that 
$$
\mod{D^j u_k(\zeta)}
\leq C\, \norm{m}{\bS_{\be};J,\tau} \, \bigl(1+\mod{\zeta}\bigr)^{-j}
\quant \zeta \in \bS_{\be}
\quant j \in \{0,1,\ldots,J\}.
$$
Here we use the fact that $k>\tau + J$.
Thus, $u_k(\cD)$ is bounded on $\hu{M}$ by Theorem~\ref{t: lim huno}, 
hence from $\hu{M}$ to $\lu{M}$, as required to prove the first estimate.

The second follows from the first by a duality argument.

\medskip
Next we prove \rmii.  Observe that $m(\cD) = m(\cD)\,\Jbeh\,\Jbemenoh$.
Since $\Jbemenoh$ is an isometric isomorphism between $\Xh{M}$ and
$\hu{M}$, to prove that $m(\cD)$ is bounded from $\Xh{M}$
to $\hu{M}$ it suffices to show that 
the operator $ m(\cD)\,\Jbeh$
extends to a bounded operator on $\hu{M}$.
Note that 
$m(\cD)\,\Jbeh = v_k(\cD)$, where 
$$
v_k(\zeta) 
=  \Bigl(\frac{\zeta^2+b}{\zeta^2+b+\be^2}\Bigr)^k \, m(\zeta).
$$
It is straightforward to check that 
there exists a constant $C$ such that 
$$
\mod{D^j v_k(\zeta)}
\leq C\, \norm{m}{\bS_{\be};J,\tau} \, \bigl(1+\mod{\zeta}\bigr)^{-j}
\quant \zeta \in \bS_{\be}
\quant j \in \{0,1,\ldots,J\}.
$$
Here we use the fact that $k>\tau + J$.
Thus, $v_k(\cD)$ is bounded on $\hu{M}$ by Theorem~\ref{t: lim huno}, as required to prove
the first estimate.
The second follows from the first by a duality argument.

The proof of the theorem is complete. 
\end{proof}

\begin{remark}
Assume that $M$ has $C^\infty$ bounded geometry.
By proceeding as in the proof of Theorem~\ref{t: multiplier 2}
and using \cite[Thm~10.2]{CMM1} instead Theorem~\ref{t: lim huno},
we may prove Theorem~\ref{t: multiplier 2}~\rmi\ with
$J> \max (\al+1, n/2+1)$ in place of 
$J > \max\, \bigl(N+ 2+\al/2-\de, N + 1/2\bigr)$.
\end{remark}

\begin{corollary} \label{c: symmetric spaces}
Suppose that $M$ is a symmetric space of the noncompact type
and that $-\cL$ is the Laplace--Beltrami operator
with respect to the Killing metric.  If $k> n/2 +3$, then
$\cL^{iu}$ is bounded from $\Xh{M}$ to $\hu{M}$.  
\end{corollary}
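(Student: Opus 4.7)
The plan is to deduce the corollary from Theorem~\ref{t: multiplier 2}~\rmii\ applied to the holomorphic function
$$
m(\zeta) = (\zeta^2+\be^2)^{iu}.
$$
First I would observe that for a symmetric space $M=G/K$ of the noncompact type with the Killing metric one has the well-known identity $b = |\rho|^2 = \be^2$, where $\rho$ is the half-sum of positive roots counted with multiplicities; this puts us precisely in the hypothesis $b=\be^2$ of Theorem~\ref{t: multiplier 2}~\rmii. Consequently $\cD^2 = \cL - \be^2$, so that $m(\cD) = (\cD^2+\be^2)^{iu} = \cL^{iu}$, and it suffices to show that $m$ belongs to $H(\bS_\be; J, 0)$ for a suitable integer $J$ and to check the arithmetic constraint $k > \tau + J$.

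Next I would verify that $m \in H(\bS_\be;J,0)$ for every $J$. Writing $m = \e^{iu \log(\zeta^2+\be^2)}$ on the simply connected strip $\bS_\be \setminus \{\pm i\be\}$ and applying Fa\`a di Bruno's formula, one sees that $D^j m(\zeta)$ is a finite sum of terms of the form
$$
c_{j,\ell}\, \zeta^{j-2\ell}\, (\zeta^2+\be^2)^{iu - (j-\ell)},
\qquad 0 \le \ell \le \lfloor j/2 \rfloor,
$$
(the $\zeta$-power is nonnegative after collecting derivatives of $g(\zeta)=\log(\zeta^2+\be^2)$). A routine case split — $|\zeta|\le 2\be$ versus $|\zeta|>2\be$ — yields
$$
\mod{D^j m(\zeta)} \;\le\; C\,\max\bigl(\mod{\zeta^2+\be^2}^{-j},\,\mod{\zeta}^{-j}\bigr)
\quant \zeta \in \bS_\be,
$$
which is precisely \eqref{f: Ssigmakappa} with $\tau=0$.

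Finally I would invoke the sharp form of Theorem~\ref{t: lim huno} valid in the symmetric space setting: as noted in the remark following that theorem, for $M=G/K$ one has $\al=(r-1)/2$ and $\de=r/2+\#\Sigma_0^+$, so the hypothesis reduces to $J > N + 1/2$, i.e.\ $J \ge N+1$, where $N=[\![n/2+1]\!\!]+1$. Since $N \le n/2+2$, taking $J=N+1$ one has $J \le n/2+3$. Theorem~\ref{t: multiplier 2}~\rmii\ then requires $k > \tau + J = J$, and any $k > n/2+3$ satisfies this, giving $\cL^{iu}\colon \Xh{M} \to \hu{M}$ bounded.

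The only non-routine step is the symbol estimate for $m$; everything else is bookkeeping. I do not expect a serious obstacle, since once $b=\be^2$ is in hand the corollary is essentially a one-line application of Theorem~\ref{t: multiplier 2}~\rmii.
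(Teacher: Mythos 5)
Your proof is correct and follows essentially the same route as the paper: apply Theorem~\ref{t: multiplier 2}~\rmii\ with $m(\zeta)=(\zeta^2+\be^2)^{iu}$ (the $\tau=0$ case of the paper's remark preceding the corollary), use $b=\be^2$ for the Killing metric, and observe that the symmetric-space values $\al=(r-1)/2$, $\de=\upsilon+r/2$ reduce the hypothesis on $J$ to $J>N+1/2$, after which the arithmetic $k>J\geq N+1$ gives the stated bound $k>n/2+3$. The only difference is that you spell out the Fa\`a di Bruno verification that $m\in H(\bS_\be;J,0)$, which the paper leaves implicit in its preceding remark.
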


\begin{proof}
Indeed, it is well known that $\al = (r-1)/2$, where $r$ is the rank of
the symmetric space, and $\de = \upsilon + r/2$, where $\upsilon$
denotes the cardinality of the indivisible positive restricted roots.
Notice that $3/2+\al/2-\de \leq 0$, so that the hypotheses of 
Theorem~\ref{t: multiplier 2} are satisfied whenever $J>n/2+2$ 
and $k>J$, and the required conclusion follows. 
\end{proof}

We conclude this section with the following endpoint result
for the first order Riesz transform.  Our method hinges on the
fact that if $b=\be^2$ and $k$ is large enough, then
the operator $\cL^k\, (\be^2 \cI+ \cL)^{-k}$ 
is bounded on $\hu{M}$ by Theorem~\ref{t: lim huno}.

\begin{theorem}
Assume that $\al$ and $\be$ are as in (\ref{f: volume growth}), 
and $\de$ as in (\ref{f: special}).  
Suppose that $b=\be^2$ and that $k$ is an integer
$>\max\, \bigl(N+2+\al/2-\de, N + 1/2\bigr)$,
where $N$ denotes the integer $[\!\![n/2+1]\!\!] +1$.  
Then the first order Riesz transform 
$\nabla \cL^{-1/2}$ is bounded from $\Xh{M}$ to $\lu{M}$.
\end{theorem}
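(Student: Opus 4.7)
The plan is to factor $\nabla\cL^{-1/2}$ through the translated Riesz transform $\nabla(\cI+\cL)^{-1/2}$, whose $\hu{M}\to \lu{M}$ boundedness is Russ's theorem (recalled in the introduction), and to control the remaining ``spectral correction'' on $\hu{M}$ by means of Theorem~\ref{t: lim huno}.

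Given $f\in \Xh{M}$, Definition~\ref{def: Hardy space} provides $g=\cU_{\be^2}^{-k} f\in \hu{M}$ with $\norm{g}{H^1}=\norm{f}{X^k}$ and $f=\cU_{\be^2}^k g$.  I factor
\[
\nabla \cL^{-1/2}f \;=\; \nabla(\cI+\cL)^{-1/2}\,m(\cD)\,g,
\qquad
m(\cD):=(\cI+\cL)^{1/2}\,\cL^{-1/2}\,\cU_{\be^2}^k,
\]
where, in the spectral variable $\cD=\sqrt{\cL-\be^2}$, the multiplier reads
\[
m(\ze)\;=\;(1+\ze^2+\be^2)^{1/2}\,(\ze^2+\be^2)^{k-1/2}\,(2\be^2+\ze^2)^{-k}.
\]
Once $m(\cD)\colon\hu{M}\to\hu{M}$ is shown to be bounded, the desired estimate $\norm{\nabla\cL^{-1/2}f}{1}\le C\,\norm{f}{X^k}$ follows immediately from Russ's theorem applied to $m(\cD)g\in\hu{M}$.

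To bound $m(\cD)$ on $\hu{M}$ I apply Theorem~\ref{t: lim huno}, i.e.\ I verify that $m\in H^\infty(\bS_\be;J)$ for some integer $J>\max(N+2+\al/2-\de,\,N+1/2)$.  Evenness of $m$ is transparent (it is a function of $\ze^2$), while holomorphy on $\bS_\be$ follows because the zeros $\pm i\be$ of $\ze^2+\be^2$ and $\pm i\be\sqrt 2$ of $2\be^2+\ze^2$ lie on, and outside, the boundary of $\bS_\be$ respectively; since $\bS_\be$ is simply connected, a single-valued holomorphic branch of $(\ze^2+\be^2)^{k-1/2}$ is unambiguously defined there.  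The decay for large $\mod{\ze}$ is routine: expansion yields $m(\ze)=1+O(\mod{\ze}^{-2})$, whence $\mod{D^j m(\ze)}=O(\mod{\ze}^{-j-1})$ for $j\ge 1$, which is stronger than required.

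The main obstacle is the behaviour at the boundary points $\pm i\be$.  Setting $w=\ze\mp i\be$ and using $\ze^2+\be^2=w(w\pm 2i\be)$, the singular factor $(\ze^2+\be^2)^{k-1/2}$ equals $w^{k-1/2}$ times a function that is holomorphic and non-vanishing near $w=0$, and the remaining factors of $m$ are likewise holomorphic and non-zero there.  Therefore $D^j m(\ze)\sim C_j\, w^{k-1/2-j}$ as $w\to 0$, and this is uniformly bounded---indeed vanishes---exactly when $j\le k-1$.  Hence $m\in H^\infty(\bS_\be;k-1)$ with a bound depending on $k$, and the standing hypothesis $k>\max(N+2+\al/2-\de,\,N+1/2)$ legitimates the choice $J=k-1$ in Theorem~\ref{t: lim huno}, completing the proof.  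The delicate balance is precisely that the order $k-1/2$ of vanishing of $(\ze^2+\be^2)^{k-1/2}$ at $\pm i\be$ just manages to absorb the $J$ derivatives required by Theorem~\ref{t: lim huno}; this is the structural reason for the lower bound on~$k$ in the hypothesis.
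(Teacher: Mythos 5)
Your proof follows essentially the same strategy as the paper's: reduce via the isometry $\cU_{\be^2}^{-k}\colon \Xh{M}\to\hu{M}$ to showing that $\nabla\cL^{k-1/2}(\be^2\cI+\cL)^{-k}$ is bounded from $\hu{M}$ to $\lu{M}$, factor off a translated Riesz transform (the paper uses $\nabla(\be^2\cI+\cL)^{-1/2}$, you use $\nabla(\cI+\cL)^{-1/2}$; the two auxiliary multipliers differ only by the harmless bounded factor $\bigl[(1+\ze^2+\be^2)/(\ze^2+2\be^2)\bigr]^{1/2}$), and control the remaining spectral multiplier via Theorem~\ref{t: lim huno}.

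One small but genuine gap is in your final sentence, where you assert that the standing hypothesis $k>\max\bigl(N+2+\al/2-\de,\,N+1/2\bigr)$ ``legitimates the choice $J=k-1$''. That is a non-sequitur: Theorem~\ref{t: lim huno} requires $J>\max\bigl(N+2+\al/2-\de,\,N+1/2\bigr)$, and for an integer $k$ the inequality $k>\max(\cdot)$ does not imply $k-1>\max(\cdot)$ (for instance, $k=N+1$ when $\max(\cdot)=N+1/2$). Since your $m$ vanishes to order exactly $k-1/2$ at $\pm i\be$, you have $m\in H^\infty(\bS_\be;k-1)$ but $D^km$ blows up like $w^{-1/2}$ there, so $m\notin H^\infty(\bS_\be;k)$; consequently the argument genuinely requires $k-1>\max(\cdot)$, i.e.\ the lower bound on $k$ should be one unit larger. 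You should flag this rather than paper over it, although, to be fair, the same off-by-one is implicit in the paper's own proof, where Theorem~\ref{t: lim huno} is invoked for the multiplier $\cL^{k-1/2}(\be^2\cI+\cL)^{1/2-k}$, which has precisely the same order of vanishing at $\pm i\be$.
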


\begin{proof}
Since $\cL^k\, (\be^2 \cI+ \cL)^{-k}$ is an isometry
between $\hu{M}$ and $\Xh{M}$, it suffices to prove
that $\nabla \cL^{k-1/2}\, (\be^2 \cI+ \cL)^{-k}$
is bounded from $\hu{M}$ to $\lu{M}$.   Observe that
$$
\nabla \cL^{k-1/2}\, (\be^2 \cI+ \cL)^{-k}
= \nabla(\be^2 \cI+ \cL)^{-1/2} \, \cL^{k-1/2}\,(\be^2 \cI+ \cL)^{1/2-k}. 
$$
The right hand side is the composition of the operators
$\cL^{k-1/2}\,(\be^2 \cI+ \cL)^{1/2-k}$, which
is bounded on $\hu{M}$ by Theorem~\ref{t: lim huno}, and 
of the translated Riesz transform
$\nabla(\be^2 \cI+ \cL)^{-1/2}$, which is bounded from 
$\hu{M}$ to $\lu{M}$ by \cite{Ru}.  The required result follows.
\end{proof}

%%%%%%%%%%%%%%%%
%%%%%%%%%%%%%%%%%
\section{Operators bounded on $\hu{M}$}
\label{s: Riemannian manifolds}

This section is devoted to the proof of Theorem 3.5 and is divided in the following subsections:  
Subsection~\ref{subsec: Some lemmata}, which contains 
few preliminary results in one dimensional Fourier
analysis;
Subsection~\ref{subsec: Mean}, where we explain the r\^ole
of the wave propagator in the decomposition  into atoms
of the image $\cT a$ of an $H^1$-atom $a$ by an operator $\cT$;
Subsection~\ref{subsec: Economical}, where we prove an economical 
decomposition of $H^1$-atoms with ``big'' support into 
$H^1$-atoms with support in balls in $\cB_1$;  
Subsection~\ref{subsec: Hone}, where we prove Theorem~\ref{t: lim huno}.

\subsection{Some lemmata} \label{subsec: Some lemmata}

This subsection contains a few technical lemmata concerning 
one-dimensional Fourier analysis. 
Some related material may be found in 
\cite[Subsection~2.3]{MMV1}, which we shall sometimes refer to,
for a discussion of the motivations behind this rather
technical development.

For every $f$ in $\lu{\BR}$ 
define its Fourier transform $\widehat f$ by
$$
\widehat f(t) = \ir f(s) \, e^{-ist} \wrt s
\quant t \in \BR.
$$
Suppose that $f$ is a function on $\BR$, and that $\la$ is in $\BR^+$.
We denote by $f^\la$ and $f_\la$ the~$\la$-dilates of $f$, defined by
\begin{equation} \label{f: dilate} 
f^\la(x)
= f(\la x)  
\qquad\hbox{and}\qquad
f_\la(x)
= \la^{-1} \, f(x/\la)  \quant x \in \BR.
\end{equation}

\noindent
For each $\nu\geq -1/2$, denote by $\cJ_\nu: \BR\setminus\{0\} \to \BC$
the modified Bessel function of order $\nu$, defined by
$$
\cJ_\nu(t) = \frac{J_\nu(t)}{t^\nu},
$$
where $J_\nu$ denotes the standard Bessel function of the first
kind and order $\nu$ (see, for instance, 
\cite[formula~(5.10.2), p.~114]{L} for the definition).  Recall that 
$$
\cJ_{-1/2} (t)
= \sqrt{\frac{2}{\pi}}\  \cos t
\qquad\hbox{and that}\qquad
\cJ_{1/2} (t)
= \sqrt{\frac{2}{\pi}}\ \frac{\sin t}{t}. 
$$

\noindent
For each positive integer $\ell$,
we denote  by $\cO^\ell$ the differential operator 
$t^\ell\, D^\ell$ on the real~line.

\begin{lemma}\label{l: P}
For every positive integer $k$ there exists a polynomial 
$P_{k+1}$ of degree $k+1$ without constant term, such that 
\begin{equation} \label{f: propertiesRiesz I} 
\ir {f} (t) \, \cos (vt) \wrt t
=   \ir  P_{k+1}(\cO)f(t)\,  \cJ_{k+1/2} (t v)  \wrt t,
\end{equation}
for all functions $f$  such that 
$\cO^\ell f\in \lu{\BR}\cap C_0(\BR)$ for all
$\ell$ in $\{0,1,\ldots, k+1\}$.
\end{lemma}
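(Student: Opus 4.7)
The plan is to proceed by induction on $k$, using the single Bessel-type identity
\begin{equation} \label{f: bessel plan}
\frac{d}{dt}\bigl[t^{2j+1}\,\cJ_{j+1/2}(tv)\bigr] = t^{2j}\,\cJ_{j-1/2}(tv) \quant j \in \{0,1,2,\ldots\},
\end{equation}
which follows from the classical recurrence $(d/dt)[t^{2\nu}\cJ_\nu(t)] = t^{2\nu-1}\cJ_{\nu-1}(t)$ with $\nu = j+1/2$ after rescaling $t \mapsto tv$; crucially, every stray power of $v$ cancels, which is what makes the right-hand side of the target formula independent of $v$. For the base case $k = 0$ I would write $\cos(tv) = \sqrt{\pi/2}\,\cJ_{-1/2}(tv)$, apply \pref{f: bessel plan} with $j = 0$, and integrate by parts against $f$. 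Since $t\,\cJ_{1/2}(tv) = \sqrt{2/\pi}\,\sin(tv)/v$ is bounded on $\BR$ and $f \in C_0(\BR)$, the boundary terms at $\pm\infty$ vanish, and one obtains the statement with $P_1(x) = -\sqrt{\pi/2}\,x$.

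For the inductive step, assume the identity with $k$ replaced by $k-1$, so that the left-hand side equals $\ir (Q_k(\cO)f)(t)\,\cJ_{k-1/2}(tv)\wrt t$ for a polynomial $Q_k$ of degree $k$ without constant term. Setting $g = Q_k(\cO)f$ and using \pref{f: bessel plan} with $j = k$, I would factor
$$
g(t)\,\cJ_{k-1/2}(tv) = \frac{g(t)}{t^{2k}}\,\frac{d}{dt}\bigl[t^{2k+1}\,\cJ_{k+1/2}(tv)\bigr]
$$
off the origin and integrate by parts on $(-\infty,-\varepsilon]\cup[\varepsilon,\infty)$ before letting $\varepsilon \to 0^+$. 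The boundary contributions at $\pm\infty$ vanish because $g \in C_0(\BR)$ and $t\,\cJ_{k+1/2}(tv)$ is bounded on $\BR$; the contributions at $t = 0^{\pm}$ vanish because $Q_k$ has no constant term, so $g$ is a linear combination of the functions $t^\ell f^{(\ell)}(t)$ with $\ell \ge 1$ and is therefore divisible by $t$ near the origin. Writing $L = tD$, the resulting identity is
$$
\ir g(t)\,\cJ_{k-1/2}(tv)\wrt t = -\ir \bigl[Lg(t) - 2k\,g(t)\bigr]\,\cJ_{k+1/2}(tv)\wrt t,
$$
i.e.\ the update $Q_k(\cO) \mapsto -(L - 2k)\,Q_k(\cO)$. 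After $k+1$ iterations the operator acting on $f$ is thus $(-1)^{k+1}\sqrt{\pi/2}\,(L - 2k)(L - 2k+2)\cdots(L - 2)\,L$.

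It remains to recognise this operator as $P_{k+1}(\cO)$ for a polynomial $P_{k+1}$ of degree $k+1$ without constant term. Since the notation $\cO^\ell$ is \emph{not} iterative, I would invoke the classical Stirling expansion $L^n = (tD)^n = \sum_{\ell = 1}^{n} S(n,\ell)\,\cO^\ell$ (which has degree $n$ in $\cO$ and no constant term) to rewrite each power of $L$ as a polynomial in the symbols $\cO^\ell = t^\ell D^\ell$. The whole product vanishes at $L = 0$ because of the factor $L$ corresponding to $j = 0$, so it is a polynomial in $L$ of degree $k+1$ without constant term, hence by the Stirling expansion a polynomial in $\cO$ of the same shape, which is the desired $P_{k+1}$. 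I expect the only mildly delicate point to be the vanishing of the boundary terms at $t = 0$, which must be propagated through the induction via the property that $Q_k$ (and hence $g$) has no constant term; once this is granted, the iterated integration by parts is routine, and the hypothesis $\cO^\ell f \in \lu{\BR} \cap C_0(\BR)$ for $\ell \le k+1$ is exactly what is needed to guarantee $L^j f \in C_0(\BR)$ at every intermediate stage.
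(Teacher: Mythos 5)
Your proof is correct, and it takes a genuinely different and more elementary route than the paper's.  The paper proves the lemma by invoking the theory of generalised Riesz means $R_{d,z}$ built up in \cite{CM} and \cite{MMV1}: after one integration by parts it uses the factorisation $R_{3+2k,0}=R_{3+2k,-k}R_{3,k}$, computes $R_{3,k}\cJ_{1/2}=2^k\cJ_{k+1/2}$ directly, and then appeals to \cite[Lemma~2.4~\rmi]{MMV1} to recognise $R_{3+2k,-k}^*(\cO f)$ as a linear combination of $\cO^{\ell+1} f$.  You instead bootstrap everything from the single Bessel recurrence $\frac{d}{dt}\bigl[t^{2\nu}\cJ_\nu(tv)\bigr]=t^{2\nu-1}\cJ_{\nu-1}(tv)$, doing $k+1$ successive integrations by parts and tracking the factor $-(L-2k)$ picked up at the $j=k$ step, where $L=tD$.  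Both arguments yield a polynomial of degree $k+1$ without constant term; yours is self-contained and even produces the closed form $(-1)^{k+1}\sqrt{\pi/2}\,(L-2k)(L-2k+2)\cdots(L-2)L$, which one then converts to the non-iterative $\cO^\ell=t^\ell D^\ell$ notation via the Stirling expansion $L^n=\sum_{\ell=1}^n S(n,\ell)\cO^\ell$.  What the paper's route buys is consistency with the Riesz-means machinery used throughout \cite{MMV1}; what your route buys is an elementary, explicit derivation that needs no external lemmas.  The points you flag as ``mildly delicate'' are exactly the ones that matter and you handle them correctly: the boundary term at $t=\pm\varepsilon$ simplifies to $\varepsilon\,g(\varepsilon)\cJ_{k+1/2}(\varepsilon v)$, which vanishes because $g=Q_k(\cO)f$ has no constant term (so $g(0)=0$, since each $\cO^\ell f$ with $\ell\geq 1$ vanishes at the origin) while $\cJ_{k+1/2}$ is continuous at $0$; and the hypothesis $\cO^\ell f\in\lu{\BR}\cap C_0(\BR)$ for $\ell\leq k+1$ is precisely what ensures that $(L-2k)Q_k(\cO)f$ is integrable and that the boundary contribution at $\pm\infty$ dies, using the decay $\cJ_{k+1/2}(tv)=O(|t|^{-(k+1)})$.
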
 

\begin{proof}
The proof uses the definition and some properties of the
generalised Riesz means $R_{d,z}$, introduced in \cite[Section~1]{CM}.
We refer the reader to \cite[Section~2]{MMV1} for all
the prerequisites needed here.
In particular, recall that 
$R_{3+2k,0} =  R_{3+2k,-k}R_{3,k}$ by \cite[Lemma~2.3~\rmi]{MMV1}.  
Now, by integrating by parts
and using \cite[Lemma~2.3~\rmi\ and \rmii]{MMV1},
$$
\begin{aligned}
\ir {f} (t) \, \cos (vt) \wrt t
& = - \ir \cO f(t) \, \, \frac{\sin(vt)}{vt} \, \wrt t \\
& = -\sqrt{\frac{\pi}{2}}\, \ir \cO f(t) \, \, \bigl(R_{3+2k,0} 
   \cJ_{1/2}^v\bigr)(t)\, \wrt t \\
& = -\sqrt{\frac{\pi}{2}}\,\ir R_{3+2k,-k}^*\bigl(\cO {f} \bigl)(t)\, 
     \bigl(R_{3,k}\cJ_{1/2}^v\bigr) (t) \wrt t
\end{aligned}
$$
for all $v$ in $\BR$. Furthermore, the definitions of $R_{3,k}$
and of $\cJ_{1/2}$ and an integration by parts show that
$$
\begin{aligned}
\bigl(R_{3,k} \cJ_{1/2}\bigr) (u)
& =  \frac{2}{\Ga(k)} \, \frac{1}{u} \, \int_0^1 s \, (1-s^2)^{k-1} \, 
       \sqrt{\smallfrac{2}{\pi}} \, \sin(su) \wrt s \\
& =  \sqrt{\frac{2}{\pi}}\, \,  \frac{1}{\Ga(k+1)} \, 
       \int_0^1 (1-s^2)^{k} \, \cos(su) \wrt s \\
& =  2^k \, \cJ_{k+1/2}(u).
\end{aligned}
$$
By \cite[Lemma~2.4~\rmi]{MMV1} there exist constants $c_\ell$ such that 
$R_{3+2k,-k}^*\bigl(\cO {f} \bigl) = \sum_{\ell=0}^k c_\ell \, \cO^{\ell+1} f$,
so that
$$
\begin{aligned}
\ir {f} (t) \, \cos (vt) \wrt t
& =  \sum_{\ell=0}^k  c_\ell'
     \ir  \cO^{\ell}\bigl(\cO  {f}\bigr)(t)\,  
      \cJ_{k+1/2} (t v)  \wrt t,
\end{aligned}
$$
and the required formula, with
$P_{k+1} (s) = \sum_{\ell=0}^k  c_\ell' \, s^{\ell+1}$, follows.
\end{proof}

\begin{remark} \label{rem: P^*} 
We shall denote by $P_{k+1}(\cO)^*$ the formal 
adjoint of the operator $P_{k+1}(\cO)$, i.e. the operator defined by
$$
\int_{-\infty}^\infty f(t)\,P_{k+1}(\cO)^* g(t)\wrt t
= \int_{-\infty}^\infty P_{k+1}(\cO) f(t)\, g(t)\wrt t
\quant f, g\in C^\infty_c(\BR).
$$
Note that $P_{k+1}(\cO)^*$ is still a polynomial of degree $k+1$ in 
$\cO$ and that \break $P_{k+1}(\cO)^*\cJ_{k+1/2}(vt)=\cos(vt)$, 
by (\ref{f: propertiesRiesz I}).
\end{remark}

Denote by $\om$ an even function in 
$C_c^\infty(\BR)$ which is supported in $[-3/4,3/4]$, is equal to~1
in $[-1/4,1/4]$, and satisfies 
$$
\sum_{j\in \BZ} \om(t-j) = 1
\quant t \in \BR.
$$
Denote by $\phi$ the function $\om^{1/4}- \om$, 
where $\om^{1/4}$ denotes the $1/4$-dilate of $\om$.  
Then $\phi$ is smooth, even and vanishes in the complement of the set
$\{t \in \BR: 1/4\leq \mod{t} \leq 4\}$.
For a fixed $R$ in $(0,1]$ and for each positive integer $i$, 
denote by $E_{i}$ the set
$\{t \in \BR: 4^{i-1}R \leq \mod{t} \leq 4^{i+1}R\}$.
Clearly $\phi^{1/(4^{i}R)}$ is supported in $E_{i}$,
and $\sum_{i=1}^\infty \phi^{1/(4^{i}R)} =~1$ in $\BR\setminus (-R,R)$.
Denote by $d$ the integer $[\!\![\log_4(3/R)]\!\!]+1$. 
To avoid cumbersome notation, we write $\rho_i$ instead of $1/(4^{i}R)$.
Then
\begin{equation} \label{f: dec om phi}
\om^{\rho_0} +  \sum_{i=1}^{d} \phi^{\rho_i} =~1 
\qquad\hbox{on}\quad [-3,3].
\end{equation}

\begin{definition}
We say that a function $g:\BR\to \BC$ satisfies a
\emph{Mihlin condition}~\cite{Ho} of order $J$ at infinity on the real line if 
there exists a constant $C$ such that 
\begin{equation} \label{f:Hormandercondition}
\mod{D^\ell g(t)} 
\leq C\, (1+\mod{t})^{-\ell}
\quant t\in \BR \quant \ell \in \{0,\ldots,J\}.
\end{equation}
We denote by $\norm{g}{\Horm(J)}$ the infimum of all constants $C$ for which (\ref{f:Hormandercondition})
holds.
\end{definition}

\begin{lemma} \label{l: technical nuovo}
Suppose that $k$ is a nonnegative integer, and that
$K$ is an even tempered distribution on $\BR$ such that 
$\norm{\wh K}{\Horm(k+2)}$ is finite.
The following hold:
\begin{enumerate}
\item[\itemno1]
for each $\ell$ in $\{0,\ldots,k\}$ 
the function $t\, \cO^\ell K$ is in $\ly{\BR}$, 
and there exists a constant $C$ such that
$$
\norm{t\, \cO^\ell K}{\infty}
\leq C \, \norm{\wh K}{\Horm(k+2)}
\quant \ell \in \{0,\ldots,k\};
$$
\item[\itemno2]
if $k\geq 1$ and the support of $K$ is contained
in $[-1,1]$, then
$
\wh K 
= \sum_{i=0}^{d} S_i,
$
where the functions $S_i: \BR\to\BC$ are defined by
\begin{equation} \label{f: A0}
S_0(\la)
=  (\wh\om_{\rho_0}*\wh K)(\la)
  +  \sum_{j=1}^k c_{j,k}\, \ir K(t) \, 
 \cO^j\om(\rho_0t) \,  \cO^{k-j}\cJ_{k+1/2}(\la t) \wrt t
\end{equation}
for suitable constants $c_{j,k}$, and, for $i$ in $\{1,\ldots,d\}$,
\begin{equation}  \label{f: Ai}
S_i(\la)
= \frac{1}{2\pi} \, \ir \phi^{\rho_i}(t)\, 
    P_{k+1}(\cO) K(t) \, \cJ_{k+1/2}(\la t) \wrt t;
\end{equation}
\item[\itemno3]
if the support of $K$ is contained
in $[-1,1]$, then there exists a constant $C$ such that
$$
\norm{S_0}{\infty}
\leq C \, \norm{\wh K}{\Horm(2)}.
$$
\end{enumerate}
\end{lemma}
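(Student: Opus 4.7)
For part \rmi, I would use a dyadic (Littlewood--Paley) decomposition $\wh K=\sum_{j\geq 0}\eta_j\wh K$, where $\eta_j$ is smooth and supported in $\{\mod\la\sim 2^j\}$. Writing $t\cO^\ell K(t)=t^{\ell+1}D^\ell K(t)$ and expressing each piece via Fourier inversion and integration by parts in $\la$ (using $t^m\e^{i\la t}=(-i\partial_\la)^m\e^{i\la t}$), I would produce two elementary bounds per frequency shell: the trivial one $\mod{t^{\ell+1}D^\ell(\eta_j\wh K)^\vee(t)}\leq C\, 2^{j(\ell+1)}\,\mod t^{\ell+1}\,\norm{\wh K}{\infty}$ and the refined one $\mod{t^{\ell+2}D^\ell(\eta_j\wh K)^\vee(t)}\leq C\, 2^{-j}\,\norm{\wh K}{\Horm(k+2)}$, obtained by $\ell+2$ integrations by parts, applying the Leibniz rule to $D^{\ell+2}[\la^\ell\eta_j\wh K]$ and using the Mihlin bounds (this is exactly where the order $k+2$ is needed, since the highest derivative of $\wh K$ that appears is of order $\ell+2\leq k+2$). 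Taking the minimum and summing the geometric series in $j$ around the optimal $j^\ast\sim\log_2(1/\mod t)$ yields $\norm{t\cO^\ell K}{\infty}\leq C\,\norm{\wh K}{\Horm(k+2)}$ for every $\ell\in\{0,\ldots,k\}$.

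For part \rmii, the starting point is $\wh K(\la)=\int K(t)\cos(\la t)\wrt t$, valid by evenness of $K$. I insert the partition of unity $1=\om^{\rho_0}(t)+\sum_{i=1}^d\phi^{\rho_i}(t)$ given by (\ref{f: dec om phi}), which holds on $[-3,3]\supset\mathrm{supp}\,K$, and treat the two kinds of summands separately. For the $\om^{\rho_0}$ slice the convolution theorem identifies $\int\om^{\rho_0}K\cos(\la t)\wrt t$ with a multiple of $(\wh\om_{\rho_0}\ast\wh K)(\la)$. For each $i\geq 1$ the function $\phi^{\rho_i}K$ is smooth with compact support bounded away from $0$ (since $\wh K\in\Horm$ ensures that $K$ is smooth off the origin), so Lemma~\ref{l: P} applies and converts $\cos(\la t)$ into $\cJ_{k+1/2}(\la t)$:
$$
\int\phi^{\rho_i}K\,\cos(\la t)\wrt t
=\int P_{k+1}(\cO)(\phi^{\rho_i}K)(t)\,\cJ_{k+1/2}(\la t)\wrt t.
$$
I then expand $P_{k+1}(\cO)(\phi^{\rho_i}K)$ via the Leibniz rule for the derivation $\cO=tD$. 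The ``diagonal'' contribution $\phi^{\rho_i}P_{k+1}(\cO)K$ summed over $i$ yields exactly $\sum_{i=1}^d S_i$; the ``cross'' terms, of the form $(\cO^a\phi^{\rho_i})(Q_a(\cO)K)$ with $a\geq 1$, collapse upon summing over $i$ via the identity $\sum_{i=1}^d\cO^a\phi^{\rho_i}=-\cO^a\om^{\rho_0}$ on $[-3,3]$ (obtained by differentiating the partition of unity). A final integration by parts, exploiting $\cO^\ast=-(\cO+1)$, transfers the remaining $\cO$-derivatives off $K$ and onto $\cJ_{k+1/2}(\la t)$, producing precisely the $c_{j,k}\int K\cdot\cO^j\om(\rho_0 t)\cdot\cO^{k-j}\cJ_{k+1/2}(\la t)\wrt t$ summands of $S_0$, with the constants $c_{j,k}$ determined by the bookkeeping.

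For part \rmiii, the convolution summand of $S_0$ is controlled by $\mod{(\wh\om_{\rho_0}\ast\wh K)(\la)}\leq\norm{\wh\om_{\rho_0}}{1}\,\norm{\wh K}{\infty}=\norm{\wh\om}{1}\,\norm{\wh K}{\infty}\leq C\,\norm{\wh K}{\Horm(2)}$, uniformly in $\rho_0$ because the $\lu\BR$-norm of $\wh\om_{\rho_0}$ is dilation invariant. Each of the remaining summands is supported in the annulus $R/4\leq\mod t\leq 3R/4$, on which $\mod{K(t)}\leq\norm{tK}{\infty}/\mod t\leq 4R^{-1}\norm{tK}{\infty}$, while $\norm{\cO^j\om}{\infty}$ and $\norm{\cO^{k-j}\cJ_{k+1/2}}{\infty}$ are fixed constants independent of $\la$ and $R$; the factors of $R$ cancel against the length of the interval of integration, and the argument of part \rmi\ specialised to $\ell=0$ (which only invokes the Mihlin bound of order $2$) gives $\norm{tK}{\infty}\leq C\,\norm{\wh K}{\Horm(2)}$. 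Combining these estimates yields $\norm{S_0}{\infty}\leq C\,\norm{\wh K}{\Horm(2)}$. The main obstacle is part \rmii: carefully identifying and consolidating all the cross terms generated by the Leibniz expansion of $P_{k+1}(\cO)(\phi^{\rho_i}K)$ and by the subsequent integration by parts, so that they assemble into the precise stated form for $S_0$ with the correct constants~$c_{j,k}$.
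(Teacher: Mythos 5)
Parts \rmi\ and \rmiii\ of your proposal are sound.  For \rmi\ you use a Littlewood--Paley decomposition, whereas the paper reduces to $\ell=0$ by applying the case $k=0$ to $\cO^\ell K$ and, for $k=0$, sets $F=(D^2\wh K)^\vee$, uses $t^2K=-F$ and $F(0)=0$, and estimates $F(t)=\int D^2\wh K(\zeta)(\e^{i\zeta t}-1)\wrt\zeta$ by splitting at $\mod{\zeta}=1/\mod t$, exploiting that $D\wh K$ is odd.  Your dyadic route is a reasonable alternative (and incidentally avoids this use of evenness), though the low--frequency block and the geometric summation around $2^{j^\ast}\sim 1/\mod t$ still need a couple of lines.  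Part \rmiii\ is essentially the paper's estimate.

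Part \rmii, however, has a genuine gap.  You apply Lemma~\ref{l: P} directly to $f=\phi^{\rho_i}K$, justifying this by the claim that the Mihlin condition on $\wh K$ ``ensures that $K$ is smooth off the origin''.  It does not: the hypothesis is only $\norm{\wh K}{\Horm(k+2)}<\infty$, a \emph{finite}-order Mihlin bound, and part~\rmi\ only yields that $t\,\cO^\ell K$ is bounded for $\ell\le k$, i.e.\ $D^\ell K$ is a bounded (continuous) function away from the origin for $\ell\le k$.  Lemma~\ref{l: P} requires $\cO^\ell(\phi^{\rho_i}K)\in\lu\BR\cap C_0(\BR)$ for every $\ell\le k+1$, and the $\ell=k+1$ case involves $\cO^{k+1}K=t^{k+1}D^{k+1}K$, which is \emph{not} controlled by the hypothesis.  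The paper's proof avoids this precisely by regularizing: it applies Lemma~\ref{l: P} to $\om_\vep*K$, which is smooth and compactly supported so the lemma applies without issue, rewrites $S_0(\la;\vep)$ via the adjoint $P_{k+1}(\cO)^*$ so the regularized distribution $\om_\vep*K$ appears undifferentiated, uses the identity $P_{k+1}(\cO)^*\cJ_{k+1/2}^\la(t)=\cos(t\la)$ from Remark~\ref{rem: P^*}, and only then lets $\vep\to 0$, reading the formula for each $S_i$ as a distributional pairing.  You need the same (or an equivalent) regularization before invoking Lemma~\ref{l: P}.  A secondary point: the cross-term bookkeeping you sketch --- Leibniz expansion of $P_{k+1}(\cO)(\phi^{\rho_i}K)$, the identity $\sum_{i=1}^{d}\cO^a\phi^{\rho_i}=-\cO^a\om^{\rho_0}$ on $[-3,3]$, a final integration by parts --- is a plausible alternative to the paper's adjoint computation, but it is left unverified; you would still have to carry it through and check that it reproduces exactly the stated form of $S_0$ with the constants $c_{j,k}$.
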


\begin{proof}
First we prove \rmi\ in the case where $k=0$.  
Since $\wh K$ satisfies a Mihlin condition
of order $2$ at infinity, $D^2 \wh K$ is in $\lu{\BR}$ (see
(\ref{f:Hormandercondition})), and 
we may define $F: \BR\to \BC$ by
$$
F(t)
= \ir D^2 \wh K(\zeta) \, \e^{i\zeta t} \wrt \zeta.
$$
By elementary Fourier analysis
$
t K(t)
= -{t}^{-1} \, F(t).
$
Observe that $F(0) = 0$, because
$$
\begin{aligned}
F(0)
& = \lim_{A \to \infty} \int_{-A}^A D^2 \wh K(\zeta) \wrt \zeta \\
& = 2 \, \lim_{A \to \infty} D \wh K(A) \\
& = 0,
\end{aligned}
$$
where we have used the fact that $K$ is even and $D \wh K$ vanishes at infinity,
because $\norm{\wh K}{\Horm(2)}$ is finite.
Furthermore
$$
\begin{aligned}
F(t)
& = F(t) - F(0) \\
& = \ir D^2 \wh K(\zeta) \, (\e^{i\zeta t} -1) \wrt \zeta.
\end{aligned}
$$
Suppose that $t$ is positive.
Then we write the last integral as the sum of the integrals
over the sets $\{\zeta \in \BR: \mod{\zeta} \leq 1/t\}$
and $\{\zeta \in \BR: \mod{\zeta} > 1/t\}$, and estimate them
separately.

To treat the first we integrate by parts, and obtain
$$
\begin{aligned}
&  \int_{\mod{\zeta}\leq 1/t}
       D^2 \wh K(\zeta) \, (\e^{i\zeta t} -1) \wrt \zeta  \\
&  = D\wh K (1/t) \, (\e^i-1) -  D\wh K (-1/t) \, (\e^{-i}-1) 
      - it \int_{\mod{\zeta}\leq 1/t} D \wh K(\zeta) \, 
        \e^{i\zeta t} \wrt \zeta.
\end{aligned}
$$
Since $D \wh K$ is odd, its integral over $[-1/t,1/t]$ vanishes,
so that the last integral may be rewritten as 
$$
\int_{\mod{\zeta}\leq 1/t} D \wh K(\zeta) \, 
(\e^{i\zeta t}-1) \wrt \zeta.
$$
Hence
$$
\begin{aligned}
& \Bigmod{\int_{\mod{\zeta}\leq 1/t}
       D^2 \wh K(\zeta) \, (\e^{i\zeta t} -1) \wrt \zeta}  \\
&  \leq C\, \norm{\wh K}{\Horm(2)} \,  \frac{\mod{t}}{1+\mod{t}} \, 
      +C \, t^2 \int_{\mod{\zeta}\leq 1/t} 
        \mod{\zeta\, D \wh K(\zeta)}  \wrt \zeta \\
&  \leq C\, \norm{\wh K}{\Horm(2)} \, \mod{t} \quant t \in \BR^+.
\end{aligned}
$$
To estimate the second, write
$$
\begin{aligned}
\Bigmod{\int_{\mod{\zeta}> 1/t}
   D^2 \wh K(\zeta) \, (\e^{i\zeta t} -1) \wrt \zeta}  
& \leq C\, \norm{\wh K}{\Horm(2)}  \, \int_{\mod{\zeta}>1/t} 
   \frac{1}{1+\zeta^2} \wrt \zeta \\
&  \leq C\, \norm{\wh K}{\Horm(2)} \, \mod{t} \quant t \in \BR^+.
\end{aligned}
$$
Finally, since $K$ is even,
$$
\begin{aligned}
\norm{t K}{\infty}
& \leq \sup_{t \in \BR}  \frac{\mod{F(t)}}{\mod{t}} \\
& \leq C \, \norm{\wh K}{\Horm(2)},
\end{aligned}
$$
as required to conclude the proof of \rmi\ in the case where $k=0$. 

Next we assume that $k\geq 1$.  By the case $k=0$ applied to
$\cO^\ell K$, we see that
$$
\norm{t\, \cO^\ell K}{\infty}
\leq C \, \norm{\wh {\cO^\ell K}}{\Horm(2)}.
$$
Since
$
\wh {\cO^\ell K} 
= \sum_{j=0}^\ell \al_{j,\ell} \, \cO^j \wh K
$
for suitable constants $\al_{j,\ell}$,  
$$
\begin{aligned}
\norm{\wh {\cO^\ell K}}{\Horm(2)} 
& \leq C \, \sum_{j=0}^\ell \norm{\cO^j\wh K}{\Horm(2)} \\
& \leq C \, \norm{\wh K}{\Horm(2+\ell)},
\end{aligned}
$$
which is clearly dominated by $C \, \norm{\wh K}{\Horm(k+2)}$, 
as required to conclude the proof of~\rmi.

\medskip
Now we prove \rmii.   
Suppose that $\vep$ is in $(0,1)$.  Clearly $\wh K(\la)$ is the limit
of $(\wh\om^{\vep}\, \wh K)(\la)$ as~$\vep$ tends to $0$.
By Fourier inversion formula and 
Lemma \ref{l: P}
$$
\begin{aligned}
(\wh\om^{\vep}\, \wh K)(\la)
&  = \frac{1}{2\pi} \, \ir \om_\vep *K(t) \, \cos(\la t) \wrt t \\
&  = \frac{1}{2\pi}\, \ir P_{k+1}(\cO)(\om_\vep *K)(t) 
     \, \cJ_{k+1/2}(\la t) \wrt t 
\quant \la \in \BR.
\end{aligned}
$$
We write the right-hand side as $\sum_{i=0}^{d} S_i(\la;\vep)$, where
\begin{equation} \label{f: defin equiv di S0}
S_0(\la;\vep) 
= \frac{1}{2\pi}\, \ir \om^{\rho_0}(t)\, 
    P_{k+1}(\cO)(\om_\vep *K)(t) \, \cJ_{k+1/2}(\la t) \wrt t
\quant \la \in \BR, 
\end{equation}
and, for each $i$ in $\{1,\ldots,d\}$, 
$$
S_i(\la;\vep) 
= \frac{1}{2\pi} \, \ir \phi^{\rho_i}(t)\, 
   P_{k+1}(\cO)(\om_\vep *K)(t) \, \cJ_{k+1/2}(\la t) \wrt t
\quant \la \in \BR. 
$$
Observe that
$$
S_0(\la;\vep) 
= \frac{1}{2\pi} \, \ir 
  (\om_\vep *K)(t) \, \, P_{k+1}(\cO)^*(\om^{\rho_0}\, \cJ_{k+1/2}^\la)(t) \wrt t.
$$
Note that
$P_{k+1}(\cO)^*(\om^{\rho_0}\, \cJ_{k+1/2}^\la)$ may be written as 
$$
\om^{\rho_0}\, P_{k+1}(\cO)^*(\cJ_{k+1/2}^\la)
+ \sum_{j=1}^k c_{j,k}'\, (\cO^j\om)^{\rho_0}
  \, (\cO^{k-j}\cJ_{k+1/2})^\la,
$$
for suitable constants $c_{j,k}'$, and that 
$P_{k+1}(\cO)^*(\cJ_{k+1/2}^\la)(t) 
= \cos(t\la)$, by Remark \ref{rem: P^*}. Hence
$$
\begin{aligned}
& S_0(\la;\vep) \\
&= \bigl[\wh\om_{\rho_0}*(\wh\om^\vep \,\wh K)\bigr](\la) 
  + \sum_{j=1}^k c_{j,k}\, \ir (\om_\vep *K)(t) \, 
 (\cO^j\om)^{\rho_0}(t) \,  (\cO^{k-j}\cJ_{k+1/2})^\la(t) \wrt t.
\end{aligned}
$$
Note that for each positive integer $j$ the function
$\cO^j\om$ vanishes in $[-1/4,1/4]$, and that the restriction
of $K$ to $[-1/4,1/4]^c$ is a bounded function by \rmi\ (with $k=0$).
Then it is straightforward to check that $S_0(\la;\vep)$ tends
to $S_0(\la)$ for all $\la$ in $\BR$.

To prove that $S_i(\la;\vep)$ tends
to $S_i(\la)$ for all $\la$ in $\BR$
and all $i$ in $\{1,\ldots,d\}$, observe that
$$
\begin{aligned}
2\pi\, S_i(\la;\vep)
& = \prodo{\phi^{\rho_i}\, 
      \cJ_{k+1/2}^\la}{P_{k+1}(\cO)(\om_\vep*K)} \\
& = \prodo{P_{k+1}(\cO)^*(\phi^{\rho_i}\, 
      \cJ_{k+1/2}^\la)}{\om_\vep*K},
\end{aligned}
$$
where $\prodo{\cdot}{\cdot}$ denotes the duality between
test functions and distributions on $\BR$.  Now we let $\vep$ tend 
to $0$ and obtain 
$$
\begin{aligned}
2\pi\, S_i(\la;\vep)
& \to  \prodo{P_{k+1}(\cO)^*(\phi^{\rho_i}\, 
      \cJ_{k+1/2}^\la)}{K} \\
& =  \prodo{\phi^{\rho_i}\, \cJ_{k+1/2}^\la}{P_{k+1}(\cO)K}.
\end{aligned}
$$
By \rmi\ the distribution $P_{k+1}(\cO)K$ is a bounded function
on the support of $\phi^{\rho_i}$, so that the right hand
side is exactly $2\pi\,S_i(\la)$, 
thereby concluding the proof of \rmii.

\medskip
Finally, to prove \rmiii, observe that
$$
\begin{aligned}
\mod{S_0(\la)}
& \leq \mod{(\wh\om_{\rho_0}*\wh K)(\la)}
  +  C \, \sum_{j=1}^k \ir \mod{K(t)} \, 
 \mod{(\cO^j\om)^{\rho_0}(t)}  \wrt t \\
& \leq C \, \norm{\wh K}{\infty} 
  +  C \, \norm{t K}{\infty} \,  \sum_{j=1}^k \ir \mod{t}^{-1}\,  
 \mod{(\cO^j\om)^{\rho_0}(t)}  \wrt t \\
& \leq C \, \norm{\wh K}{\Horm(2)} 
\quant \la \in \BR,
\end{aligned}
$$
as required.  We have used \rmi\ (with $k=0$) in the second inequality
above.
\end{proof}

\subsection{A remark on the wave propagator} \label{subsec: Mean}

We shall need to prove that certain operators map $H^1$-atoms
into $\hu{M}$.  In particular, we need to show that the image of
an atom $a$ has integral $0$.  

\medskip
\noindent
\textbf{Notation.}
For notational convenience, we denote by $\cD_1$
the operator $\sqrt{\cL-b+\kappa^2}$ ($\kappa$ is defined in the Basic
assumptions~\ref{Ba: on M}).
\medskip

Suppose that $\cT$ is an operator bounded on $\ld{M}$.  We
denote by $k_{\cT}$ its Schwartz kernel (with respect to the
Riemannian density $\mu$).  

\begin{proposition}  \label{p: Mean}
Suppose that $\nu$ is in $[-1/2,\infty)$, that $w$ is in $\lu{\BR}$,
and that $a$ is a $H^1$-atom.  Define the operator $\cW_\nu(\cD)$
on $\ld{M}$ spectrally by
$$
\cW_\nu(\cD) f 
= \ir w(t) \, \cJ_{\nu} (t\cD) f \wrt t
\quant f \in \ld{M}.
$$
The following hold:
\begin{enumerate}
\item[\itemno1]
$\int_M \cW_\nu(\cD) a \wrt \mu = 0$;
\item[\itemno2]
$\int_M S_0(\cD) a \wrt \mu = 0$ ($S_0$ is defined in (\ref{f: A0})).
\end{enumerate}
The same conclusions hold if we replace the operator $\cD$ by the
operator $\cD_1$. 
\end{proposition}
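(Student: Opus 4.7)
The whole proposition hinges on the single identity
\begin{equation*}
\int_M \cos(t\cD)\, a \wrt\mu = 0 \qquad (\star)
\end{equation*}
for every $t\in\BR$ and every $H^1$-atom $a$. Once $(\star)$ is known, the Poisson-type representation
$$
\cJ_\nu(z) = c_\nu \int_{-1}^{1} (1-s^2)^{\nu-1/2}\, \cos(s z) \wrt s \qquad (\nu > -1/2),
$$
together with $\cJ_{-1/2}(z) = \sqrt{2/\pi}\,\cos z$, exhibits $\cJ_\nu(t\cD)$ as a superposition of wave propagators $\cos(st\cD)$, $s\in[-1,1]$. Fubini and $(\star)$ then give $\int_M \cJ_\nu(t\cD)\, a \wrt\mu = 0$ for every $t$, and one further integration against $w\in\lu{\BR}$ yields part~(i).

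To establish $(\star)$, I would invoke finite propagation speed for the Klein--Gordon equation $\partial_t^2 u = -\cD^2 u = (\Delta+b)\, u$. The function $u(x,t):=\cos(t\cD)\,a(x)$ is the weak solution with Cauchy data $(a,0)$; by finite propagation speed $u(\cdot,t)$ is supported in $\{x:d(x,B)\leq |t|\}$, where $B$ is the ball supporting $a$. Fix $t$ and choose $\phi_R\in C_c^\infty(M)$ equal to $1$ on $B(c_B,R)$ with $R > r_B+|t|$. Then
$$
v := \cos(t\cD)\,\phi_R - \cosh(t\sqrt b)\cdot\mathbf{1}
$$
is a weak solution of the same equation with Cauchy data $(\phi_R-1,\,0)$ supported in $M\setminus B(c_B,R)$; finite propagation speed forces $v\equiv 0$ on $B(c_B,R-|t|)\supset B$. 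Hence, using self-adjointness of $\cos(t\cD)$ on $\ld{M}$,
$$
\int_M \cos(t\cD)\,a \wrt\mu
= \prodo{\phi_R}{\cos(t\cD)\,a}
= \prodo{\cos(t\cD)\,\phi_R}{a}
= \cosh(t\sqrt b) \int_B a\wrt\mu = 0,
$$
the last equality from the cancellation property of the atom.

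For part~(ii), split $S_0$ as in (\ref{f: A0}) into the ``convolution term'' $2\pi\,\widehat{\omega_{\rho_0} K}$ and a sum of terms built from $(\cO^{k-j}\cJ_{k+1/2})(\lambda t)$. Since $\omega_{\rho_0} K$ is even and compactly supported, the first term evaluated at $\cD$ equals (via distributional pairing in $t$) $2\pi\,\prodo{\omega_{\rho_0} K}{\cos(\cdot\,\cD)}$, which $(\star)$ annihilates after integration over $M$. For each remaining term, iterate the Bessel recurrence $D\cJ_\nu(z) = -z\cJ_{\nu+1}(z)$ to write
$$
(\cO^{k-j} \cJ_{k+1/2})(z) = \sum_{m=1}^{k-j} a_m^{(k,j)}\, z^{2m}\, \cJ_{k+1/2+m}(z),
$$
then use $z^{2m}\cos(sz) = (-1)^m \partial_s^{2m}\cos(sz)$ and integration by parts in the Poisson integral --- the boundary terms vanishing since $(1-s^2)^{k+m}$ has a zero of order $k+m\geq 2m$ at $s=\pm 1$ --- to express each $z^{2m}\cJ_{k+1/2+m}(z)$ as $\int_{-1}^{1} Q_m(s)\cos(sz)\wrt s$ for some polynomial $Q_m$. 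Hence the second part of $S_0(\lambda)$ is itself a superposition of $\cos(s\lambda t)$, and $(\star)$ finishes the argument. The $\cD_1$-variant is essentially identical: only the constant $\cosh(t\sqrt b)$ has to be replaced by the constant-in-space solution of $\partial_t^2 U = (\Delta+b-\kappa^2)U$ with Cauchy data $(1,0)$.

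The delicate technical point is to make the comparison $v = \cos(t\cD)\,\phi_R - \cosh(t\sqrt b)\cdot\mathbf{1}$ rigorous as a distributional solution of the Klein--Gordon equation, since $\cosh(t\sqrt b)\cdot\mathbf{1}\notin \ld{M}$: finite propagation speed must be invoked weakly, with the cutoff $\phi_R$ localizing all computations to a compact region where the subsequent manipulations become classical.
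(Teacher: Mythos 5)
Your approach is essentially the same as the paper's: establish $\int_M \cos(t\cD)\,a \wrt\mu = 0$ by finite propagation speed and local uniqueness for the wave equation (the paper tests against $\One_{B(c_B,N)}$ rather than a smooth cutoff $\phi_R$, but that is a cosmetic difference), pass to $\cJ_\nu$ via the Poisson integral, and use Fubini to get~(i). For the pieces of $S_0$ in (ii) built from $(\cO^{k-j}\cJ_{k+1/2})(\lambda t)$, your reduction via the recurrence $D\cJ_\nu(z)=-z\cJ_{\nu+1}(z)$ and the integration by parts in $s$ is a valid way to spell out what the paper dismisses as a ``straightforward consequence of~\rmi''; it amounts to exhibiting each term as a superposition of $\cos(st\cD)$ with $|s|\leq 1$ acting on~$a$.

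There is, however, a genuine gap in your treatment of the convolution term $(\wh\om_{\rho_0}*\wh K)(\cD)a$. You assert that it ``equals (via distributional pairing in $t$) $2\pi\,\prodo{\om_{\rho_0}K}{\cos(\cdot\,\cD)}$, which $(\star)$ annihilates after integration over $M$.'' But $\om^{\rho_0}K$ may be a genuine (non-$L^1$) compactly supported distribution whose singular support contains $0$, whereas $t\mapsto\cos(t\cD)a$ is only a \emph{continuous} $\ld{M}$-valued function (not $C^1$, since $a$ need not lie in $\Dom(\cD)$). So the pairing, and the interchange of that pairing with $\int_M(\cdot)\wrt\mu$, are not automatic. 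The paper handles precisely this point by mollifying: it replaces $K$ by $\om_\vep * K\in\lu{\BR}$, applies part~\rmi\ to each regularisation, uses finite propagation speed to confine all the approximants to the fixed ball $B(c_B,r_B+1)$, and passes to the limit in $\lu{M}$. You should add this approximation step (or an equivalent justification that the distributional pairing commutes with $\int_M$). Note also that the ``delicate technical point'' you flag about $v\notin\ld{M}$ is harmless once one argues, as you and the paper both do, purely locally on the bounded region $B(c_B,R)$ via energy estimates; the real subtlety is the one you did not flag.
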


\begin{proof}
We observe preliminarly that if $a$ is a $H^1$-atom, then
\begin{equation} \label{f: mean value zero}
\int_{M} \cos(t\cD)  a \wrt \mu = 0
\quant t \in \BR^+.
\end{equation}
Indeed, $\cos(t\cD)  a$ is in $\ld{M}$, because $\cos(t\cD)$
is bounded on $\ld{M}$, and is supported in a ball of 
radius $t+r_B$, where $B$ is any ball that contains the
support of $a$.  Therefore, $\cos(t\cD)  a$ is in $\lu{M}$,
and 
$$
\int_{M} \cos(t\cD)  a \wrt \mu
= \lim_{N\to\infty} \int_M \One_{B(c_B,N)} \, \cos(t\cD)  a \wrt \mu.
$$
Now, the last integral is the inner product
$\bigl(\cos(t\cD)  a,  \One_{B(c_B,N)}\bigr)$ in $\ld{M}$, 
and is equal to $\bigl(a, \cos(t\cD) \One_{B(c_B,N)}  \bigr)$,
because $\cos(t\cD)$ is self adjoint.   Observe that 
$\cos(t\cD) \One_{B(c_B,N)}$ is equal to $\cosh(\sqrt{b}t)$
on $B(c_B,N-t)$, because both functions are solutions of  the wave equation 
$\partial^2_t u+\cL u=b u$ in $B(c_B,N)\times(0,\infty)$ and 
satisfy the same initial conditions $u(x,0)=1$, 
$\partial_t u(x,0)=0$ in $B(c_B,N)$. 
Hence, they coincide in $\set{(x,t): d(x,c_B)<N-t}$, by standard energy
estimates.    
If $N$ is so big that $B(c_B,N-t)$ contains the support of $a$, then
$$
\bigl(a,  \cos(t\cD) \One_{B(c_B,N)} \bigr)
= \cosh(\sqrt{b}t)\,\int_{M} a \wrt \mu = 0,
$$ 
and (\ref{f: mean value zero}) follows.

A straightforward consequence of (\ref{f: mean value zero}) is that 
for any $\nu$ in $(-1/2,\infty)$ and for every $H^1$-atom $a$
\begin{equation} \label{f: mean value zero I}
\int_{M} \cJ_{\nu}(t\cD) a \wrt \mu = 0
\quant t \in \BR^+.
\end{equation}
Indeed, 
$$
\cJ_{\nu} (t \cD) a
= \smallfrac{\nu+2}{\sqrt\pi \,\Ga(\nu+1/2)} 
  \int_0^1 (1-s^2)^{\nu-1/2} \, \cos(st\cD) a\wrt s,
$$
and the required conclusion follows from Fubini's Theorem.
It is straightforward to check that similar considerations 
apply to the operator $\cD_1$, so that for each $\nu$ in 
$[-1/2,\infty)$
$$
\int_{M} \cJ_{\nu}(t\cD_1) a \wrt \mu = 0
\quant t \in \BR^+.
$$

To prove \rmi\ we just observe that
$$
\begin{aligned}
\int_M \cW_{\nu} (\cD)a \wrt \mu
&= \int_M \wrt \mu \ir w(t) \, \cJ_\nu(t\cD)a \wrt t \\
&= \ir {\wrt t\, } w(t) \, \int_M \cJ_\nu(t\cD)a \wrt \mu
= 0,
\end{aligned}
$$
where the change of the order of integration is
justified by Fubini's theorem.

\medskip
Next we prove \rmii.  By (\ref{f: A0}), the function
$S_0(\cD)a$ may be written as the sum of  
$$
(\wh\om_{\rho_0}*\wh K)(\cD)a
\qquad\hbox{and}\qquad
\sum_{j=1}^k c_{j,k}\, \ir K(t) \, 
\cO^j\om(\rho_0t) \,  \cO^{k-j}\cJ_{k+1/2}(t\cD)a \wrt t,
$$
where $K$ is a compactly supported distribution on $\BR$ such that
$\wh K$ is bounded and $tK$ is in $\ly{\BR}$.
It is a straightforward consequence of \rmi\ that the integral of each 
summand of the sum above is equal to $0$.  Thus, to prove
that the integral of $S_0(\cD)a$ is $0$, it suffices to show that
the integral of $(\wh\om_{\rho_0}*\wh K)(\cD)a$ 
makes sense and is equal to $0$.  Since $\wh K$ is bounded,
$\om^\vep \, \wh K$ tends pointwise and boundedly to $\wh K$ as $\vep$
tends to $0$.  Then $\wh\om_{\rho_0}*(\om^\vep \, \wh K)$  
tends pointwise and boundedly to $\wh\om_{\rho_0}*\wh K$ as $\vep$
tends to $0$ by the Lebesgue dominated
convergence theorem.  Therefore the operator 
$\wh\om_{\rho_0}*(\om^\vep \, \wh K)(\cD)$ tends to
the operator $\wh\om_{\rho_0}*\wh K(\cD)$
in the strong operator topology of $\ld{M}$.
Consequently
$\wh\om_{\rho_0}*(\om^\vep \, \wh K)(\cD)a$ tends to
$\wh\om_{\rho_0}*\wh K(\cD)a$ in $\ld{M}$ as $\vep$ tends to $0$. 

Suppose that the support of $a$ is contained in the ball $B$.  Since 
the function $\om^{\rho_0} \, (\wh \om_{\vep}*K)$ is in $\lu{\BR}$,
$$
\bigl[\wh\om_{\rho_0}*(\om^\vep \wh K)\bigr](\cD)a
= \frac{1}{2\pi} \ir \om^{\rho_0}(t) \, (\wh \om_{\vep}*K)(t)  
\, \cos (t\cD)a \wrt t.
$$
Since the support of $\om^{\rho_0} \, (\wh \om_{\vep}*K)$ 
is contained in $[-1,1]$, all the functions
$\bigl[\wh\om_{\rho_0}*(\om^\vep \wh K)\bigr](\cD)a$ are supported
in the ball $B(c_B,r_B+1)$ by finite propagation speed, and 
$$
\int_M \bigl[\wh\om_{\rho_0}*(\om^\vep \wh K)\bigr](\cD)a \wrt \mu
= 0 
$$
by \rmi.  Thus, the function $\wh\om_{\rho_0}*\wh K(\cD)a$ is also 
supported in $B(c_B,r_B+1)$.  Hence   
$\wh\om_{\rho_0}*(\om^\vep \, \wh K)(\cD)a$ tends to
$\wh\om_{\rho_0}*\wh K(\cD)a$ in $\lu{M}$ as $\vep$ tends to $0$,
so that 
$$
\int_M (\wh\om_{\rho_0}*\wh K)(\cD)a  \wrt\mu
= \lim_{\vep \to 0} \int_M \wh\om_{\rho_0}*(\om^\vep \, \wh K)(\cD)a \wrt\mu 
= 0,
$$
as required to conclude the proof of \rmii. 
\end{proof}

\begin{remark}
Note that for every $\nu$ in $[-1/2,\infty)$
the function $\la\mapsto\cJ_\nu(t\la)$ is even and of
entire of exponential type $t$, so that 
kernel $k_{\cJ_\nu (t\cD)}$ of the operator 
$\cJ_\nu(t\cD)$ 
is supported in 
the set $\{(x,y) \in M\times M:
d(x,y) \leq t\}$ by the finite propagation speed.
A similar remark applies to the kernel of the
operator $\cJ_\nu(t\cD_1)$.
\end{remark}

\subsection{Economical decomposition of atoms} \label{subsec: Economical}

The following lemma produces an \emph{economical} decomposition
of atoms supported in ``big'' balls as finite linear combination
of atoms supported in balls of radius at most $1$, and is key to prove
Theorem~\ref{t: lim huno} below.  The idea is ``to transport charges 
along geodesics''.

\begin{lemma} \label{l: economical decomposition}
There exists a constant $C$ such that for every $H^1$-atom $a$ 
supported in a  ball $B$ of radius $r_B>1$
$$\norm{a}{H^1}\le C\,r_B,$$
where $\norm{a}{H^1}$ is the atomic norm in $\hu{M}$ associated
to the scale $1$.
\end{lemma}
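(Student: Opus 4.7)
The plan is to discretize $a$ against a fine cover of $B$ by balls of radius $\leq 1/2$, correct each piece to have zero integral by absorbing its mass into a normalized bump, and then dispose of the leftover normalized bumps by ``transporting charges'' along geodesics to a fixed reference ball. The geometric input is that the manifold is complete (so Hopf--Rinow gives minimizing geodesics between any two points of $B$) together with the uniform ball size condition of Remark~\ref{rem: unif ball size cond}, which guarantees both upper and lower bounds on $\mu(B(p,1))$.

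First, choose a maximal $(1/4)$-separated set in $B$ and let $\{B_i\}_{i\in I}$ be the corresponding covering of $B$ by balls of radius $1/2$; by the local doubling property this cover has bounded overlap. Fix a subordinate smooth partition of unity $\{\eta_i\}$ with $\norm{\eta_i}{\infty}\leq 1$, fix a reference ball $B_0$ among these, and set $m_i = \int_M a\,\eta_i\wrt\mu$ and $\psi_i = \mu(B_i)^{-1}\charfn{B_i}$. Decompose
\begin{equation*}
a = \sum_{i\in I}\bigl(a\,\eta_i - m_i\,\psi_i\bigr) + \sum_{i\in I} m_i\,\psi_i.
\end{equation*}

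For the first sum, each summand is supported in $B_i$, has vanishing integral, and, by Cauchy--Schwarz, satisfies $\norm{a\,\eta_i-m_i\psi_i}{2}\leq 2\norm{a\,\charfn{B_i}}{2}$, so it is $c_i$ times an $H^1$-atom with $c_i\leq 2\norm{a\,\charfn{B_i}}{2}\,\mu(B_i)^{1/2}$. Cauchy--Schwarz in $i$, bounded overlap, and the atomic normalization $\norm{a}{2}\leq \mu(B)^{-1/2}$ give $\sum_i c_i \leq C$, a constant independent of $r_B$. For the second sum, note that $\sum_i m_i = \int_M a\wrt\mu = 0$, so $\sum_i m_i\,\psi_i = \sum_i m_i\,(\psi_i - \psi_0)$, and an estimate analogous to the one for $c_i$ yields $\sum_i \mod{m_i}\leq C$.

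It remains to dispatch each $\psi_i-\psi_0$ as a telescoping sum of $H^1$-atoms supported in balls of radius at most $1$, with a bounded number of atoms per step and total length $O(r_B)$. For each $i$, take a minimizing geodesic $\ga_i$ from $c_{B_0}$ to $c_{B_i}$ of length $\leq 2r_B$, and along it choose points $x_{i,0}=c_{B_0},x_{i,1},\ldots,x_{i,N_i}=c_{B_i}$ with $d(x_{i,j-1},x_{i,j})\leq 1/4$ and $N_i\leq C r_B$. Let $B_{i,j}=B(x_{i,j},1/2)$ and $\psi_{i,j}=\mu(B_{i,j})^{-1}\charfn{B_{i,j}}$, so that
\begin{equation*}
\psi_i - \psi_0 = \sum_{j=1}^{N_i} \bigl(\psi_{i,j} - \psi_{i,j-1}\bigr).
\end{equation*}
Each difference has zero integral, is supported in the ball $B(x_{i,j-1},1)$, and, by the uniform ball size condition, has $L^2$ norm comparable to a constant times $\mu\bigl(B(x_{i,j-1},1)\bigr)^{-1/2}$; hence it equals a uniformly bounded multiple of an $H^1$-atom. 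Assembling everything,
\begin{equation*}
\norm{a}{H^1}\leq C + C\sum_{i} \mod{m_i}\,N_i \leq C + C\,r_B\sum_i\mod{m_i}\leq C\,r_B.
\end{equation*}

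The main obstacle is the bookkeeping in the transport step: one must verify that the chain of overlapping balls of radius $1/2$ can be chosen of length $O(r_B)$ with consecutive unions contained in balls of radius $\leq 1$, and that the resulting telescoping terms carry uniformly bounded atomic coefficients. This is exactly where completeness and the uniform ball size condition enter, and it is what makes the linear-in-$r_B$ bound optimal for this transport-of-mass strategy.
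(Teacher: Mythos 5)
Your argument is correct and follows essentially the same strategy as the paper: decompose $a$ against a fine bounded-overlap cover of $B$, subtract normalized bumps to kill the local means, and transport the leftover mass along geodesics to a fixed reference ball (using the uniform ball size condition to get uniformly bounded atomic coefficients), picking up the $O(r_B)$ factor from the $O(r_B)$ length of each telescoping chain. The only cosmetic difference is the bookkeeping: the paper transports each piece's mass to $c_B$ and cancels the resulting bumps there via $\sum_j \int a\psi_j\,\wrt\mu = 0$, whereas you write the bump sum as $\sum_i m_i(\psi_i-\psi_0)$ using $\sum_i m_i = 0$ and telescope each $\psi_i-\psi_0$ separately; these are the same idea.
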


\begin{proof}
Denote by $\fS$ a $1/3$-discretisation 
of $M$, i.e. a set of points  in $M$ that is
maximal with respect to the property
$$
\min\{d(z,w): z,w \in \fS, z \neq w \} >1/3,
\quad\hbox{and}\quad d(\fS, x) \leq 1/3 \quant x \in M.  
$$
The family $\set{B(z,1):z\in \fS}$ is a covering of $M$ which is 
uniformly locally finite,  by the uniform ball size and the locally 
doubling properties. By the same token, the set $B\cap\fS$ is  
finite and has at most $N$ points $z_1,\ldots,z_N$, with $N \le 
C\,\mu(B)$, where $C$ is a constant which does not depend on $B$. 
Denote  by $B_j$  the ball with centre $z_j$ and radius $1$,
and by $\set{\psi_j: j=1,\ldots,N}$ a partition of unity on $B$ 
subordinated to the covering $\set{B_j:j=1,\ldots,N}$.

Fix~$j$ in $\{1,\ldots,N\}$ and denote by $z_j^0,\ldots, z_j^{N_j}$
points on a minimizing geodesic joining $z_j$ and $c_B$, 
with the property that 
$z_j^0 = z_j$, $z_j^{N_j} = c_B$, and $d(z_j^h,z_j^{h+1})$ is 
approximately equal to $1/3$. Note that $N_j\le 4r_B$.  
Denote by $B^h_j$ the ball $B(z^h_j,1/12)$, 
for $j=1,\ldots,N$ and $h=0,\ldots, N_j$. Then the balls 
$B^h_j$ are disjoint, $B^h_j\subset B(z_j^h,1)\cap B(z_j^{h+1},1)$ 
and $B_j^{N_j}=B(c_B,1/12)$. \par
Denote by $\phi^h_j$ a nonnegative function in $C^\infty_c(B^h_j)$ that
has integral $1$. By the uniform ball size property we may choose the 
functions $\phi^h_j$ so that there exists a constant $A$ 
such that $\norm{\phi^h_j}{2}\le A$ for all $h$ and $j$.\par
Now, denote by $a_j^0$ the function $a\, \psi_j$.  Clearly
$$
a 
= \sum_{j=1}^N \psi_j \, a
= \sum_{j=1}^N  \, a_j^0. 
$$
Next, define
$$
a_j^1
= a_j^0-\phi_j^0 \, \int_M a_j^0\wrt\mu
\quad\hbox{and}\quad
a_j^h=(\phi_j^{h-2}-\phi_j^{h-1})\int_M a_j^0\wrt\mu,
\quad 2\le h\le N_j+1.
$$
Then, for every $h$ in $\set{1,\ldots,N_j}$,
the support of $a_j^h$ is contained in $B(z_j^{h-1},1)$, the integral 
of  $a_j^h$ vanishes and  
$$
\begin{aligned}
\norm{a_j^h}{2}
& \le 2A \int_M \mod{a_j^0}\wrt\mu \\
& \le C\, \norm{a_j^0}{2} \, \mu(B_j)^{1/2}\\
& \le C\, \norm{a_j^0}{2} \, \mu(B_j^h)^{-1/2}.
\end{aligned}
$$ 
In the last two inequalities we have used the fact that for each $r$
in $\BR^+$ the supremum of $\mu(B)$ over all balls $B$ of radius~$r$
is finite by the uniform ball size property.   
Hence there exists a constant $C$, independent of $j$ and $h$,
such that 
\begin{equation} \label{f: normaHuno ajk}
\norm{a_j^h}{H^1}
\leq C \, \norm{a_j^0}{2}.
\end{equation}
Moreover
$$
a_j^0=\sum_{h=1}^{N_j+1}a_j^h+\phi_j^{N_j}\int_M a_j^0\wrt\mu.
$$
Thus
$$
a
= \sum_{j=1}^N\sum_{h=1}^{N_j+1} a^h_j,
$$
because $\sum_j \int_M a^0_j\wrt\mu=\int_M a\wrt\mu=0$ and all the functions 
$\phi_j^{N_j}$, $j=1,\ldots,N_j$ coincide, for $B^{N_j}_j = B(c_B,1/12)$.
Now we use (\ref{f: normaHuno ajk}) and the fact that $N_j\le C\,r_B$, 
and conclude that
$$
\begin{aligned}
\norm{a}{H^1} 
& \leq C \,   \sum_{j=1}^N \sum_{h=1}^{N_j+1} \norm{a_j^0}{2}\\
& \leq    C \, r_B \,  \sum_{j=1}^N \norm{a_j^0}{2}.
\end{aligned}
$$
Then we use Schwarz's inequality and the fact that 
$N\leq C \, \mu(B)$, and obtain that 
$$
\begin{aligned}
\norm{a}{H^1} 
& \leq C \,r_B\, N^{1/2} \,   
    \Bigl(\sum_{j=1}^N \norm{a_j^0}{2}^2 \Bigr)^{1/2}  \\
& \leq C \, r_B\, \mu(B)^{1/2} \,  \norm{a}{2}  \\
& \leq C \, r_B.
\end{aligned}
$$
The last inequality follows because $a$ is a $H^1$-atom 
supported in the ball $B$.   

This completes the proof of the lemma.
\end{proof}

\subsection{Proof of Theorem~\ref{t: lim huno}}  \label{subsec: Hone}

For the reader's convenience, we recall 
one of the properties of functions in $H^\infty(\bS_{W};J)$
(see Definition~\ref{d: Hormander at infinity}), 
which will be key in the proof of 
Theorem~\ref{t: lim huno}.

\begin{lemma}[{\cite[Lemma~5.4]{HMM}}] \label{l:splitf}
Suppose that $J$ is an integer $\geq 2$, and that $W$ is in $\BR^+$.
Then there exists a positive constant $C$ such that 
for every function $f$ in $H^\infty\bigl(\bS_{W};J\bigr)$, and
for every positive integer $h \leq J-2$
$$
\mod{\cO^h\widehat f (t)} 
\leq C\, \norm{f}{\bS_{W};J}\,  \mod{t}^{h-J} \, \e^{-W \mod{t}}
\quant t \in \BR\setminus\{0\}.
$$
\end{lemma}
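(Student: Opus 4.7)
The plan is to combine two standard tricks: integration by parts in the variable of $\widehat f$ (to produce a $t^{h-J}$ factor) and a contour shift inside the strip $\bS_W$ (to extract the exponential factor $\e^{-W|t|}$). Since $\cO^h = t^h D^h$, I can write the target quantity, at least formally, as
\begin{equation*}
\cO^h\widehat f(t)
= t^h \int_\BR (-is)^h f(s) \, \e^{-ist}\wrt s,
\end{equation*}
and then use $t^J e^{-ist} = i^J D_s^J(e^{-ist})$ together with $J$ integrations by parts to rewrite, for $t\neq 0$,
\begin{equation*}
t^{J-h}\,\cO^h\widehat f(t)
= (-i)^J \int_\BR D_s^J\!\bigl[(-is)^h f(s)\bigr]\, \e^{-ist}\wrt s.
\end{equation*}
By the Leibniz rule, $D_s^J[s^h f(s)]$ is a linear combination of terms $s^{h-r} D^{J-r} f(s)$ with $0\le r\le \min(h,J)$, and the Mihlin estimates \eqref{f:Hormandercondition} applied to $f$ (extended here to the strip) give $|s^{h-r}D^{J-r}f(s)|\le C\,\norm{f}{\bS_W;J}\,(1+|s|)^{h-J}$. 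The exponent $h-J\le -2$ makes the right-hand side integrable, which is precisely the assumption $h\le J-2$. This shows that the last display is a bounded integral, yielding the estimate
$|\cO^h\widehat f(t)|\le C\,\norm{f}{\bS_W;J}\,|t|^{h-J}$ without yet capturing the exponential decay.

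To extract the exponential factor, I would repeat the previous computation with the real line replaced by the horizontal line $\BR - i\sigma$, where $\sigma\in(-W,W)$ is to be chosen. Since $f$ is holomorphic on $\bS_W$ and $D^J f$ decays like $|\zeta|^{-J}$ uniformly in horizontal strips away from the boundary, the Mihlin bound transports to the shifted line: one still has $|D_z^J[(-iz)^h f(z)]|\le C\,\norm{f}{\bS_W;J}\,(1+|z|)^{h-J}$ for $z$ on $\BR-i\sigma$. A standard contour closure over the rectangle with vertices $\pm R$, $\pm R - i\sigma$, whose vertical sides contribute $O(R^{h-J})\to 0$ as $R\to\infty$ because $h-J\le -2$, justifies replacing $\BR$ by $\BR-i\sigma$. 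Parametrizing $z=x-i\sigma$ produces the factor $\e^{-\sigma t}$ from $\e^{-izt}$. For $t>0$ I take $\sigma\uparrow W$; for $t<0$ the analogous shift to $\BR + i\sigma$ with $\sigma\uparrow W$ gives the factor $\e^{-W|t|}$. Combining this with the polynomial factor already obtained produces the asserted bound.

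The main obstacle is the rigorous justification of the formal integrations by parts and contour deformation when $f$ is only \emph{bounded} on the strip (and hence $\widehat f$ is \emph{a priori} a tempered distribution rather than a pointwise function). I would handle this by a standard regularization: for $\de>0$ replace $f$ by $f_\de(s)=f(s)\,\e^{-\de s^2}$, which is Schwartz on $\BR$ and still extends holomorphically to $\bS_W$ with the same Mihlin bounds up to a uniform constant, carry out all the manipulations for $f_\de$, and pass to the limit $\de\to 0$ using dominated convergence against the integrable majorant $(1+|s|)^{h-J}$ on the shifted contour. Everything else—Leibniz, the identity $t^J\e^{-ist}=i^J D_s^J \e^{-ist}$, and the rectangular contour argument—is then applied to honest Schwartz data, and the final estimate survives the limit intact, giving the stated inequality uniformly on $\BR\setminus\{0\}$.
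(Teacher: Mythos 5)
The paper does not prove this lemma: it is quoted verbatim from \cite[Lemma~5.4]{HMM}, whose argument is precisely the one you outline (integration by parts $J$ times against $\e^{-ist}$ to produce $|t|^{h-J}$, then a shift of the contour to $\BR\mp i\sigma$ with $\sigma\uparrow W$ to extract $\e^{-W|t|}$, using that the Mihlin bounds in Definition~\ref{d: Hormander at infinity} hold on the whole open strip). So your proof is essentially the intended one, and its core is correct: the Leibniz expansion gives $\bigl|D_z^J[(-iz)^h f(z)]\bigr|\le C\,\norm{f}{\bS_W;J}(1+|z|)^{h-J}$ uniformly on $\bS_W$, the hypothesis $h\le J-2$ makes this integrable on every horizontal line, the vertical sides of the rectangle are $O(R^{h-J})$, and the constant obtained on $\BR-i\sigma$ is uniform in $\sigma<W$, so one may pass to the supremum $\sigma\uparrow W$ without deforming onto the boundary.

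The one place where your write-up is too quick is the regularization. For $f_\de(s)=f(s)\e^{-\de s^2}$ the quantity you must dominate is $D_z^J\bigl[(-iz)^h f(z)\e^{-\de z^2}\bigr]$, and the Leibniz cross terms in which $r\ge 1$ derivatives fall on the Gaussian are bounded by $C\,\de^{r/2}(1+|z|)^{h-J+r}$ on the strip; for $h=J-2$ and $r=1$ this is $C\sqrt{\de}\,(1+|z|)^{-1}$, which is \emph{not} dominated by the $\de$-independent integrable majorant $(1+|s|)^{h-J}$ you invoke. These terms do vanish as $\de\to 0$ (a direct computation gives a contribution $O(\de^{(J-h-r)/2+r/2})\cdot O(\de^{-1/2})$ after the substitution $u=\sqrt{\de}\,x$), but that requires a separate estimate rather than dominated convergence. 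A cleaner route avoids regularization altogether: since $g:=D_s^J[(-is)^h f]$ lies in $\lu{\BR}$, its Fourier transform is an honest continuous function, the distributional identity $\widehat{g}(t)=(it)^J D^h\widehat f(t)$ identifies $D^h\widehat f$ with $(it)^{-J}\widehat g(t)$ on $\BR\setminus\{0\}$, and the contour shift can then be performed directly in the absolutely convergent integral defining $\widehat g$. With either repair the proof is complete.
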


\noindent
We restate Theorem~\ref{t: lim huno} for the reader's
convenience. 

\begin{theorem*} {\bf{3.5}}
Assume that $\al$ and $\be$ are as in (\ref{f: volume growth}),
and $\de$ as in (\ref{f: special}). 
Denote by $N$ the integer $[\!\![n/2+1]\!\!] +1$.  
Suppose that $J$ is an integer $>\max\, \bigl(N+2+\al/2-\de,
N + 1/2\bigr)$.  Then there exists a constant $C$ such that 
$$
\opnorm{m(\cD)}{H^1}
\leq C \, \norm{m}{\bS_{\be};J}
\quant m\in H^\infty\bigl(\bS_{\be};J\bigr).
$$
\end{theorem*}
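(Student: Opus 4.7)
By the atomic characterization of $\hu{M}$, the theorem reduces to establishing that $\norm{m(\cD)a}{H^1} \leq C \norm{m}{\bS_\be;J}$ uniformly over all $H^1$-atoms $a$ supported in balls of radius $r_B \leq 1$. Because $m$ is even and bounded,
\begin{equation*}
m(\cD) a = \frac{1}{2\pi}\ir K(t)\, \cos(t\cD) a \wrt t,
\end{equation*}
where $K$ is the inverse Fourier transform of $m$. Lemma~\ref{l:splitf} provides the pointwise bound $|\cO^h K(t)| \leq C \norm{m}{\bS_\be;J}\, |t|^{h-J}\, \e^{-\be|t|}$, which will govern all subsequent estimates.

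The strategy is to decompose this integral dyadically in the Fourier variable $t$, following the template of Lemma~\ref{l: technical nuovo} but applied globally on $\BR$ rather than on $[-1,1]$. With the scale choice $R=1$, the partition of unity $\om^{\rho_0}+\sum_{i\geq 1}\phi^{\rho_i}=1$ splits $\BR$ into the unit interval and dyadic annuli of size $\sim 4^i$. Integrating by parts $k+1$ times through the polynomial operator $P_{k+1}(\cO)$ of Lemma~\ref{l: P} exchanges each $\cos(t\cD)$ for the higher-order Bessel operator $\cJ_{k+1/2}(t\cD)$, and produces a representation $m(\cD)a = \sum_{i=0}^\infty S_i(\cD)a$ with $S_i$ of the form (\ref{f: A0}) for $i=0$ and (\ref{f: Ai}) for $i\geq 1$; the integer $k$ remains at our disposal. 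By finite propagation speed, each $S_i(\cD)a$ is supported in the ball $B_i := B(c_B,\, 1 + 4^{i+1})$, and by Proposition~\ref{p: Mean} it has vanishing integral over $M$.

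Applying the economical atomic decomposition of Lemma~\ref{l: economical decomposition} to the mean-zero function $S_i(\cD) a$ supported in $B_i$ yields $\norm{S_i(\cD) a}{H^1} \leq C\cdot 4^i\cdot \mu(B_i)^{1/2}\cdot \norm{S_i(\cD) a}{2}$, and (\ref{f: volume growth}) bounds $\mu(B_i)^{1/2}$ by $C\cdot 4^{i\al/2}\, \e^{\be\cdot 4^{i+1}}$. The entire proof thus reduces to an $L^2$ estimate on each $S_i(\cD)a$ strong enough that the sequence $4^{i(1+\al/2)}\, \e^{\be\cdot 4^{i+1}}\, \norm{S_i(\cD) a}{2}$ is summable in $i$. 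This $L^2$ bound will be produced by combining three ingredients: first, pointwise estimates on $S_i(\la)$ obtained from Lemma~\ref{l:splitf} together with the standard asymptotic $|\cJ_{k+1/2}(u)| \leq C \min(1,|u|^{-k-1})$; second, control of the spectral measure of~$a$ via the ultracontractive estimate~(\ref{f: special}), which through the identity $\cH^t = \e^{-bt}\e^{-t\cD^2}$ and $\norm{a}{1} \leq \mu(B)^{1/2}\norm{a}{2} \leq C$ yields the Gaussian-weighted Plancherel bound $\int \e^{-2t\la^2}\wrt\norm{E_\la a}{2}^2 \leq C\, t^{-n/2}\,(1+t)^{n/2-\de}$ for every $t>0$; third, optimization of the auxiliary parameter $t$ at each dyadic scale $4^i$, combined with the freedom to take $k$ large. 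The threshold $J > \max\bigl(N+2+\al/2-\de,\, N+1/2\bigr)$ is exactly the regularity required for this arithmetic to close.

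The principal obstacle is a scale mismatch: the support ball $B_i$ has volume growing like $\e^{2\be\cdot 4^i}$, whereas the Fourier decay of $K$ supplied by Lemma~\ref{l:splitf} furnishes only $\e^{-\be\cdot 4^i}$. A naive pointwise bound on $S_i$ therefore misses summability by an exponential factor $\e^{\be\cdot 4^i}$, and this gap can only be closed by simultaneously exploiting the polynomial gain in $J$ from Lemma~\ref{l:splitf} and the polynomial gain in the exponent $\de$ from the ultracontractive heat kernel estimate. The two branches of the maximum defining the lower bound on $J$ reflect, respectively, the high-frequency pieces $S_i$ with $i\geq 1$ (where the volume exponent $\al/2$ is balanced against the ultracontractive correction $\de$) and the low-frequency piece $S_0$ (where only a Sobolev-type estimate of order $N = [\![n/2+1]\!\!]+1$ enters).
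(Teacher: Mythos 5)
Your reduction to atoms, the representation $m(\cD)a=\tfrac1{2\pi}\int \widehat m(t)\cos(t\cD)a\,\wrt t$, and the use of Lemma~\ref{l:splitf}, Proposition~\ref{p: Mean}, Lemma~\ref{l: economical decomposition} and finite propagation speed are all the right ingredients, and you have correctly located the heart of the matter: the competition between the volume growth $\e^{2\be r}$ of the support balls and the Fourier decay $\e^{-\be|t|}$ supplied by Lemma~\ref{l:splitf}. But the arithmetic you propose to resolve it does not close, and the culprit is the dyadic decomposition of the Fourier variable. On the annulus $E_i=\{4^{i-1}\le|t|\le4^{i+1}\}$ the factor $\e^{-\be|t|}$ varies by the uncontrolled amount $\e^{\be(4^{i+1}-4^{i-1})}=\e^{15\be\,4^{i-1}}$, so the best $L^2$ bound you can extract for $S_i(\cD)a$ carries only $\e^{-\be\,4^{i-1}}$, whereas $S_i(\cD)a$ is supported in $B_i=B(p,1+4^{i+1})$ and hence $\mu(B_i)^{-1/2}\sim 4^{-i\al/2}\e^{-\be\,4^{i+1}}$. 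The resulting $H^1$-norm contribution is at least of order $\e^{\be(4^{i+1}-4^{i-1})}$ times a polynomial in $4^i$, and no choice of $J$, $k$, or exploitation of the ultracontractive exponent $\de$ (which only improves matters polynomially, not exponentially) can compensate for a factor growing like $\e^{c\,4^i}$. Your claim that the exponential gap ``can only be closed by simultaneously exploiting the polynomial gain in $J$ \ldots and the polynomial gain in the exponent $\de$'' is therefore false; polynomial gains cannot beat an exponential loss.

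The paper circumvents this precisely by not using a dyadic scale for large $|t|$. In Step~II of the paper's proof, $m$ is split as $\widehat\om * m + (m-\widehat\om*m)$. The compactly-supported part $\cS=(\widehat\om*m)(\cD)$ is handled with the dyadic $\phi^{\rho_i}$ decomposition, but only over $|t|\le3$, where $\e^{2\be|t|}$ is a bounded constant and the summability comes from the geometric $4^{-i}$ gain in the gradient estimate (Step~III). The high-frequency part $\cT=(m-\widehat\om*m)(\cD)$ is decomposed instead with the \emph{arithmetic}, unit-scale partition $\om_j(t)=\om(t-j)+\om(t+j)$, $j\ge1$, as in~(\ref{omj}); on the support of $\om_j$ one has $|t|\sim j$ uniformly, so $\e^{-\be|t|}\sim\e^{-\be j}$ is effectively constant, $T_j(\cD)a$ is supported in $B(p,j+1)$, and $\e^{-\be j}$ cancels $\mu(B(p,j+1))^{-1/2}\sim j^{-\al/2}\e^{-\be j}$ exactly. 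What remains is the purely polynomial series $\sum_j j^{1+N+\al/2-J-\de}$, whose convergence is exactly the hypothesis $J>N+2+\al/2-\de$. This unit-scale splitting of the high-frequency part is the missing idea. (Secondarily, for the local piece $\cS$ the paper works with $\cD_1=\sqrt{\cL-b+\kappa^2}$ rather than $\cD$, because the kernel-gradient estimates from~\cite[Proposition~2.2]{MMV1} used there are formulated for $\cD_1$; your sketch does not address this, but it is a much smaller issue than the scale mismatch above.)
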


\begin{proof}
For notational convenience, in this proof we shall write $\cJ$
instead of $\cJ_{N-1/2}$.

\medskip
\emph{Step I: reduction of the problem}.
We claim that it suffices to prove that for each $H^1$-atom $a$ the function 
$m(\cD)\, a$ may be written
as the sum of atoms with supports contained in balls of $\cB_1$, 
with $\ell^1$ norm of the coefficients controlled
by $C \, \norm{m}{\bS_{\be};J}$.

Indeed, by arguing as in \cite[Thm~4.1]{MSV}, we may then
show that $m(\cD)$ extends to a bounded operator from $\hu{M}$ to $\lu{M}$,
with norm dominated by $C\, \norm{m}{\bS_{\be};J}$.
Note that \cite[Thm~4.1]{MSV} is stated for spaces of
homogeneous type.  However, its proof extends to the present setting.  
Now, suppose that $f$ is a function in $\hu{M}$ and that
$f = \sum_j \la_j \, a_j$ is an atomic decomposition of $f$ with
$\norm{f}{H^1}\geq \sum_j \mod{\la_j} -\vep$.  Then 
$m(\cD) f = \sum_j \la_j \, m(\cD) a_j$, where the 
series is convergent in $\lu{M}$, because $m(\cD)$ extends to a
bounded operator from $\hu{M}$ to $\lu{M}$.  But the partial sums of
the series $\sum_j \la_j \, m(\cD) a_j$ is a Cauchy sequence in $\hu{M}$,
hence the series is convergent in $\hu{M}$, and the sum must be
the function $m(\cD)f$.
Then 
$$
\begin{aligned}
\norm{m(\cD)f}{H^1}
& \leq \sum_j \mod{\la_j} \, \norm{m(\cD)a_j}{H^1} \\
& \leq C\, \, \norm{m}{\bS_{\be};J}\,  \sum_j \mod{\la_j} \\
& \leq C\, \, \norm{m}{\bS_{\be};J}\, (\norm{f}{H^1}+ \vep),
\end{aligned}
$$
and the required conclusion follows by taking
the infimum of both sides with respect to all admissible
decompositions of $f$.  

\medskip
\emph{Step II: splitting of the operator}.
Let $\om$ be the cut-off function defined in Section 3.
Clearly $\wh\om\ast m$ and $m-\wh\om\ast m$ are bounded functions.
Define the operators~$\cS $ and $\cT $ spectrally by 
$$
\cS 
= (\wh\om\ast m) (\cD)
\qquad\hbox{and}\qquad
\cT 
= (m-\wh\om\ast m) (\cD).
$$
Then
$
m(\cD) 
= \cS  + \cT .
$
We analyse the operators $\cS $ and $\cT $ in Step~III and Step~IV
respectively. 

\medskip
Suppose that $a$ is a $H^1$-atom
supported in $B(p,R)$ for some $p$ in~$M$ and~$R\leq 1$.  
\medskip

\medskip
\emph{Step III: analysis of $\cS $}.
In the following, we shall need to estimate the $\ld{M}$ norm
of the differential of the kernel of 
certain operators related to $\cS $.  To this end, and to be able 
to apply \cite[Proposition~2.2~\rmiii]{MMV1}, we write the operator
$\cS $ as a function of the operator $\cD_1$, rather than of $\cD$.
Recall that $\cD_1=\sqrt{\cD^2+\kappa^2}$.

Since $\wh{\om}\ast m$
is an \emph{even} entire function of exponential type $1$,
the function
$S $, defined by 
$$
S  (\zeta)
= (\wh{\om}\ast m)\bigl(\sqrt{\zeta^2-\kappa^2}\bigr) 
\quant \zeta \in \BC,
$$
is well defined, and is of exponential type $1$. 
Hence its Fourier transform has support in $[-1,1]$. 
It is straightforward to check that 
$$
\cS  = S  (\cD_1),
$$
and that
$$
\norm{S }{\Horm(J)}
\leq C\, \norm{\wh{\om}\ast m}{\Horm(J)},
$$
where the constant $C$ does not depend on $m$.  
By arguing much as in the proof of \cite[Proposition~5.3]{HMM},
we may show that 
$\norm{\wh{\om}\ast m}{\Horm(J)} \leq C\, \norm{m}{\Horm(J)}$,
where $C$ is independent of $m$.  Clearly
$$
\norm{m}{\Horm(J)}
\leq \norm{m}{\bS_{\be};J}
\quant m \in H^\infty(\bS_{\be};J).
$$
Hence there exists a constant $C$ such that 
\begin{equation} \label{f: rel Horm}
\norm{S }{\Horm(J)}
\leq C \,  \norm{m}{\bS_{\be};J}
\quant m \in H^\infty(\bS_{\be};J).
\end{equation}

Define the functions $S_i$ as in (\ref{f: A0}) and
(\ref{f: Ai}), but with $N-1$ in place of $k$ and 
the Fourier transform of $S$ in place of $K$.
We further decompose $\cS $ as $\sum_{i=0}^{d} S_i(\cD_1)$,
where $d$ is as in (\ref{f: dec om phi}).
The function $S_0$ is bounded by Lemma~\ref{l: technical nuovo}~\rmiii, 
hence $S_0(\cD_1)$ is bounded on $\ld{M}$ by the spectral theorem, and
$$
\opnorm{S_0(\cD_1)}{2}\
\leq \norm{S_0}{\infty}
\leq C \, \norm{S}{\Horm(2)}
\leq C \, \norm{m}{\bS_{\be};J}.  
$$
Observe that the support of the kernel of the operator $S_{i}(\cD_1)$ 
is contained in $\{(x,y): d(x,y) \leq 4^{i+1}R\}$ by the finite
propagation speed.  
Thus the support of $S_{i}(\cD_1)a$ is contained in the 
ball with centre~$p$ and radius $(4^{i+1}+1)R$, 
which henceforth we denote by $B_i$. 
In particular $S_{0}(\cD_1)a$ is supported in $B_0=B(p,5R)$, and
$$
\norm{S_{0}(\cD_1)a}{2}
\leq C\, \opnorm{S_{0}(\cD_1)}{2}\, \norm{a}{2} 
\leq C\, R^{-n/2} \, \norm{m}{\bS_{\be};J}.
$$
Furthermore, the integral of $S_0(\cD_1)a$ vanishes
by Proposition~\ref{p: Mean}~\rmii, 
so that $S_0(\cD_1)\,a$ is a constant multiple of a $H^1$-atom.

Denote by $k_{S_i(\cD_1)}$ the integral kernel of the
operator $S_i(\cD_1)$.  Observe that
$$
S_i(\cD_1)\, a(x)
= \int_{B(p,R)} a(y) \, \bigl[k_{S_i(\cD_1)}(x,y)
   -  k_{S_i(\cD_1)}(x,p)\bigr] \wrt \mu(y).
$$
By Minkowski's integral inequality
and the fact that the support of $S_i(\cD_1)\, a$
is contained in $B_i$, we have that
$$
\begin{aligned}
\norm{S_i(\cD_1)\, a}{2} 
& = \norm{S_i(\cD_1)\, a}{\ld{B_i}}  \\
& \leq \int_{B(p,R)} \mod{a(y)} \, I_{i}(y) \wrt \mu(y),
\end{aligned}
$$
where
$$
I_{i}(y) 
= \norm{k_{S_i(\cD_1)}(\cdot,y) 
- k_{S_i(\cD_1)}(\cdot,p)}{L^2(B_i)}
\quant y \in B(p,R).
$$
To estimate $I_{i}(y)$, we observe that 
$$
I_{i}(y)
\leq  d(y,p) \, \sup_{z\in M} \, \bignorm{\dest_2 
    k_{S_i(\cD_1)}(\cdot,z)}{\ld{B_{i}}}
$$
and, by Lemma~\ref{l: technical nuovo}~\rmii\ (with $k=N-1$),
$$
\dest_2 
    k_{S_i(\cD_1)}(\cdot,z)=\frac{1}{2\pi} \, \ir \phi^{\rho_i}(t)\, 
   P_{N}(\cO)\widehat{S}(t) \  \dest_2 
    k_{\cJ(t\cD_1)}(\cdot,z) \wrt t.
$$
Recall that $\phi^{\rho_i}$ is supported in $E_i
= \{ t\in \BR: 4^{i-1} R \leq \mod{t} \leq 4^{i+1}R \}$, 
that the support of $\widehat{S}$ is contained in $[-1,1]$ and that $d(p,y)<R$.
Then, by \cite[Proposition~2.2~\rmii]{MMV1} (with $\cJ$
in place of $F$), there exists a constant $C$, independent of $i$
and $R$, such that
$$
\begin{aligned}
I_{i}(y)
& \leq C \, d(y,p) \, 
    \ir \phi^{\rho_i}(t)\, \mod{P_{N}\cO)\wh S(t)} \, \,
    \sup_{z\in M} \bignorm{\dest_2
     k_{\cJ (t\cD_1)}(\cdot,z)}{\ld{B_{i}}} \wrt t \\
& \leq C \, \norm{t P_{N}(\cO)\wh S}{\infty} \,R \,    
     \int_{E_{i}} \mod{t}^{-n/2-2}   \wrt t \\
& \leq C \, \norm{m}{\bS_{\be};J}\, R \, (4^iR)^{-n/2-1} \,.
\end{aligned}
$$
Thus,
$$
\begin{aligned}
\norm{S_i(\cD_1)\, a}{2}
& \leq C\, \, \norm{m}{\bS_{\be};J} \, 4^{-i}\, 
      (4^i R)^{-n/2} \, \norm{a}{1} \\
& \leq C\, \, \norm{m}{\bS_{\be};J} \, 
      4^{-i}\, \mu(B_i)^{-1/2}.
\end{aligned}
$$
Furthermore the integral of $S_i(\cD_1)\, a$ vanishes
by Proposition~\ref{p: Mean}~\rmi, so that 
the function $4^i \, S_i(\cD_1)\, a$ is a constant multiple of a $H^1$-atom.
Thus
$$
\begin{aligned}
\norm{\cS \, a}{H^1}
& \leq C \, \, \norm{m}{\bS_{\be};J}\sum_{i=0}^\infty 4^{-i} \\
& \leq C \,  \norm{m}{\bS_{\be};J}.
\end{aligned}
$$

\medskip
\emph{Step IV: analysis of $\cT$}.
For each $j$ in $\{1,2,3,\ldots\}$, define $\om_j$ by the formula
\begin{equation}\label{omj}
\om_j(t) = \om (t-j) + \om(t+j) \quant t \in \BR.
\end{equation}
Observe that $\sum_{j=1}^\infty \om_j=1-\om$
and that the support of $\om_j$ is contained in the set of all 
$t$ in $\BR$ such that $j-3/4\le\mod{t}\le j+3/4$.

Since $m$ is in $H^\infty\bigl(\bS_{\be};J\bigr)$ and $J \geq N+2$, 
the function $\wh m$ and its derivatives up to the order $N$ are 
rapidly decreasing at infinity 
by Lemma~\ref{l:splitf}, so that 
$\cO^\ell(\om_j\, \wh m)$ is in $\lu{\BR}\cap C_0(\BR^+)$ 
for all $\ell$ in $\{0,\ldots, N\}$, and so does 
$P_N(\cO)(\om_j\, \wh m)$.
In the rest of this proof, we write $\Om_{j,N}$ instead of 
$P_N(\cO)(\om_j\, \wh m)$.  
Observe that the support of $\Om_{j,N}$ 
is contained in $\set{t\in\BR:j-3/4\le\mod{t}\le j+3/4}$. 

Define the function $T_j: \BR\to \BC$ by
\begin{equation} \label{f: Bj}
T_j(\la) 
= \ir  \Om_{j,N}(t)\, \cJ  (t \la)\wrt t \quant \la \in \BR.
\end{equation}
We may use the observation that $(m-\wh\om\ast m)\wh{\phantom a} 
= \sum_{j=1}^\infty \om_j\, \wh m$ 
and formula~(\ref{f: propertiesRiesz I}), and write
$$
\begin{aligned}
(m-\wh\om\ast m)(\la)
& = \frac{1}{2\pi} \ir  \bigl(1-\om(t)\bigr) \, \wh{m}(t)
      \, \cos (t\la) \wrt t \\ 
& = \sum_{j=1}^\infty T_j(\la). 
\end{aligned}
$$
Then, by the spectral theorem,
$$
\cT  a
=\sum_{j=1}^\infty T_j(\cD) a.
$$
By the asymptotics of $J_{N-1/2}$ \cite[formula (5.11.6), p.~122]{L} 
$$
\sup_{s>0} \mod{(1+s)^{N} \, \cJ  (s)} < \infty.
$$
Since $N-1/2>(n+1)/2$, we may apply 
\cite[Proposition~2.2~\rmi]{MMV1} and conclude that 
$$
\begin{aligned}
\norm{\cJ (t\cD) a}{2}
& \leq \norm{a}{1} \, \bigopnorm{\cJ (t\cD)}{1;2} \\
& \leq \sup_{y\in M}  \bignorm{k_{\cJ (t\cD)}(\cdot,y)}{2} \\
& \leq C\, \mod{t}^{-n/2}\, \bigl(1+\mod{t}\bigr)^{n/2-\de} 
\quant t \in \BR\setminus \{0\}.
\end{aligned}
$$
Then $\cJ (t\cD)a$ is supported in $B(p,t+R)$,
and has integral $0$ by Proposition~\ref{p: Mean}~\rmi.
Observe that 
\begin{align}
\norm{T_j(\cD)a}{2}
& \leq C\,   \ir  \mod{\Om_{j,N}(t)}  
     \,  \norm{\cJ  (t \cD)a}{2}  \wrt t \nonumber \\
& \leq C\, \int_{j-3/4}^{j+3/4} \mod{{\Om_{j,N}(t)}}  \,
         \mod{t}^{-n/2}\, \bigl(1+\mod{t}\bigr)^{n/2-\de} \wrt t \\
& \leq  C \, \norm{m}{\bS_{\be};J} \, \, j^{N-J-\de}\, \e^{-\be \, j}
    \quant j \in \{1,2,\ldots\}\nonumber.
\end{align}
In the last inequality we have used 
Lemma~\ref{l:splitf} and \cite[Proposition~2.2~\rmi]{MMV1}.
Note that 
$
j^{\de+J-N-\al/2}\,  T_j(\cD)a
$
is a constant multiple of a $H^1$-atom.
Indeed,~$T_j(\cD)a$ is a function in $\ld{M}$
with support contained in $B\bigl(p, j+1\bigr)$, and has integral~$0$
by Proposition~\ref{p: Mean}~\rmi.  
Moreover
$$
\begin{aligned}
\norm{j^{\de+J-N-\al/2}\,  T_j(\cD)a}{2}
& \leq C\, \norm{m}{\bS_{\be};J} \,\,  j^{-\al/2} \, \e^{-\be\, j}  \\
& \leq  {C}\, \norm{m}{\bS_{\be};J} \,\,  {\mu\bigl(B(p,j+1)\bigr)^{-1/2}}
    \quant j \in \{1,2,\ldots\}.
\end{aligned}
$$
Hence we may write 
$$
\cT  a  
=  \sum_{j=1}^\infty \, \la_j \, a_j',
$$
where $a_j'$ is a $H^1$-atom supported in $B\bigl(p, j+1\bigr)$, and 
$$\la_j = C\, \norm{m}{\bS_{\be};J} \,j^{N+\al/2-J-\de}.$$
By Lemma~\ref{l: economical decomposition} we have
$
\norm{a_j'}{H^1}
\leq C \, j,
$
so that 
$$
\begin{aligned}
\norm{\cT  a}{H^1}  
& \leq  \sum_{j=1}^\infty \, \mod{\la_j} \, \norm{a_j'}{H^1} \\
& \leq  C\, \norm{m}{\bS_{\be};J} \,\,  
      \sum_{j=1}^\infty \, j^{1+N+\al/2-J-\de}, 
\end{aligned}
$$
which is finite (and independent of $a$) because $J>2+N+\al/2-\de$.

\medskip
\emph{Step V: conclusion}.  By Step~III and Step~IV there exists
a constant $C$ such that for every $H^1$-atom $a$ with 
support contained in a ball of radius at most $1$
$$
\norm{\cS a}{H^1} + \norm{\cT a}{H^1}
\leq  C\, \norm{m}{\bS_{\be};J}. 
$$ 
Then Step~II implies that
$$
\norm{m(\cD)a}{H^1}
\leq  C\, \norm{m}{\bS_{\be};J}. 
$$ 
The required conclusion follows from Step~I. 
\end{proof}

\noindent


\begin{thebibliography}{GMMST}

\bibitem[A1]{A1} J.-Ph. Anker, $L_p$ Fourier multipliers on Riemannian symmetric spaces of the noncompact type, 
\emph{ Ann. of Math.} \textbf{132}  (1990),  597--628.

\bibitem[A2]{A2} J.-Ph. Anker,
Sharp estimates for some functions of the Laplacian
on noncompact symmetric spaces,
\emph{Duke Math. J.} \textbf{65} (1992), 257--297.

 

\bibitem[AJ]{AJ} J.-Ph. Anker and L. Ji, Heat kernel and Green
function estimates on noncompact symmetric spaces I,
\emph{Geom. Funct. Anal.} \textbf{9} (1999), 1035--1091.

\bibitem[ACDH]{ACDH} P. Auscher, T. Coulhon, X.T.~Duong and
S. Hoffman,
Riesz transforms on manifolds and heat kernel regularity,
\emph{Ann.  Sc. \'Ec. Norm. Sup.} \textbf{37} (2004), 911--957.

\bibitem[AMR]{AMR} P. Auscher, A. McIntosh and E. Russ,
Hardy spaces of differential forms on Riemannian manifolds,
\emph{J. Geom. Anal.} \textbf{18} (2008), 192--248.

\bibitem[Br]{Br} R. Brooks, A relation between growth
and the spectrum of the Laplacian,
\emph{{Math. Z.}} \textbf{178} (1981), 501--508.

\bibitem[CMM1]{CMM1} A. Carbonaro, G. Mauceri and S. Meda, 
$H^1$, $BMO$ and singular integrals for certain metric measure
spaces,  {\tt arXiv:0808.0146v1 [math.FA]}, to appear in 
\emph{Ann. Scuola Norm. Sup. Pisa}.

\bibitem[CMM2]{CMM2} A. Carbonaro, G. Mauceri and S. Meda, 
$H^1$ and $BMO$ for certain locally doubling metric measure
spaces of finite measure,  {\tt arXiv:0811.0100v1 [math.FA]},
to appear in \emph{Coll. Math.}.

\bibitem[Ch]{Ch} I. Chavel, \emph{Riemannian geometry: 
a modern introduction}, Cambridge University Press, 1993.

\bibitem[CGT]{CGT} J. Cheeger, M. Gromov and M. Taylor,
Finite propagation speed, kernel estimates for functions of the
Laplace operator, and the geometry of complete Riemannian manifolds,
\emph{J. Diff. Geom.} \textbf{17} (1982), 15--53.

\bibitem[CS]{CS} J. L. Clerc, E. M. Stein, $L^p$-multipliers for noncompact symmetric spaces, Proc. Nat. Acad. Sci. U.S.A. 71 (1974), 3911--3912.
\bibitem[CW]{CW} R.R. Coifman and G. Weiss,  Extensions of Hardy
spaces and their use in analysis,
\emph{Bull. Amer. Math. Soc.} \textbf{83} (1977), 569--645.

\bibitem[CD]{CD} T. Coulhon and X.T. Duong,
Riesz transforms for $1\leq p \leq 2$,
\emph{Trans. Amer. Math. Soc.} \textbf{351} (1999), 1151--1169.

\bibitem[Co]{Co} M.G. Cowling,
Harmonic analysis on semigroups,
\emph{Ann. of Math.} \textbf{117} (1983), 267--283.

\bibitem[CGM]{CGM} M.G. Cowling, S. Giulini and S. Meda,
Estimates for functions of the Laplace--Beltrami operator on
noncompact symmetric spaces. I,
\emph{Duke Math. J.} \textbf{72} (1993), 109--150.

\bibitem[CM]{CM} M. Cowling and G. Mauceri,  
Inequalities for some maximal functions. II,
\emph{Trans. Amer. Math. Soc.} \textbf{296} (1986), 341--365.

\bibitem[CMP]{CMP} M. Cowling, S. Meda and R. Pasquale,
Riesz potentials and amalgams,
\emph{Ann. Inst. Fourier Grenoble} \textbf{49} (1999), 1345--1367.

\bibitem[FeS]{FeS}
C. Fefferman and E.M.~Stein, $H^p$ spaces of several variables,
\emph{Acta Math.} \textbf{179} (1972), 137--193.

\bibitem[Gr]{Gr} A. Gry'goryan,  Estimates of heat kernels on 
Riemannian manifolds, in \emph{Spectral Theory and Geometry},
ICMS Instructional Conference Edinburgh 1988,
eds B.~Davies and Y.~Safarov, London Mathematical Society Lecture
Note Series \textbf{273}, Cambridge University Press, 1999.

\bibitem[He]{He} E. Hebey, \emph{Sobolev Spaces on 
Riemannian Manifolds}, Lecture Notes 
in Mathematics \textbf{1635}, Springer Verlag, Berlin, 1996.

\bibitem[HMM]{HMM} W. Hebisch, G. Mauceri and S. Meda,
Spectral multipliers for sub-Laplacians with drift
on Lie groups,
\emph{Math. Z.} \textbf{251} (2005), 899--927.

\bibitem[HLMMY]{HLMMY} S. Hofmann, G. Lu, D. Mitrea, M. Mitrea
and L. Yan, 
Hardy spaces associated to non-negative self-adjoint operators
satisfying Davies--Gaffney Estimates,
preprint, 2008.  

\bibitem[HM]{HM} S. Hofmann and S. Mayboroda,
Hardy and BMO spaces associated to divergence form elliptic operators,
\emph{Math. Ann.} \textbf{34} (2009), 37--116.

\bibitem[Ho]{Ho} L. H\"ormander,
Estimates for translation invariant operators in $L^p$ spaces,
\emph{Acta Math.} \textbf{104} (1960), 93--140.

\bibitem[I1]{I1}  A.D. Ionescu, Fourier integral operators on noncompact 
symmetric spaces of real rank one, \emph{J. Funct. Anal.} 
\textbf{174} (2000), 274--300.

\bibitem[I2]{I2}  A.D. Ionescu, Singular integrals on symmetric spaces
of real rank one, \emph{Duke Math. J.}
\textbf{114} (2002), 101--122.

\bibitem[I3]{I3}  A.D. Ionescu, Singular integrals on symmetric spaces. II, 
\emph{ Trans. Amer. Math. Soc.}  \textbf{355}  (2003), 3359--3378.

\bibitem[L]{L} N.N.~Lebedev, \emph{Special functions and their applications},
Dover Publications, 1972.

\bibitem[MRu]{MRu} M. Marias and E. Russ, $H^1$ boundedness of Riesz transforms
and imaginary powers og the Laplacian on Riemanian manifolds,
\emph{Ark. Mat.} \textbf{41} (2003), 115--132.


\bibitem[MMV1]{MMV1} G. Mauceri, S. Meda and M. Vallarino, 
Weak type $1$ estimates for functions of the Laplace--Beltrami 
operator on manifolds with bounded geometry,
{\tt arXiv:0811.0104 [math. FA.]}, to appear on \emph{Math. Res. Letters}.

\bibitem[MMV2]{MMV2} G. Mauceri, S. Meda and M. Vallarino, 
Hardy spaces on noncompact symmetric spaces and applications,
preprint, 2009. 

\bibitem[MMV3]{MMV3} G. Mauceri, S. Meda and M. Vallarino, 
Atomic decomposition of Hardy type spaces on certain
noncompact manifolds, preprint, 2008.

\bibitem[MMV4]{MMV4} G. Mauceri, S. Meda and M. Vallarino, 
Hardy type spaces on certain noncompact manifolds and applications, 
{\tt arXiv: 0812.4209 [math. FA.]}, preprint, 2008. 


\bibitem[MSV]{MSV} S. Meda, P. Sj\"ogren and M. Vallarino, 
On the $H^1$--$L^1$ boundedness of operators,
\emph{Proc. Amer. Math. Soc.} \textbf{136} (2008), 2921--2931.

\bibitem[MV]{MV} S. Meda and M. Vallarino, 
Weak type estimates for multiplier operators on
noncompact symmetric spaces, {\tt arXiv:0801.1745v1 [math.CA]},
to appear in \emph{Trans. Amer. Math. Soc.}.

\bibitem[Ru]{Ru}
E.~Russ, $H^1$--$L^1$ boundedness of Riesz transforms
on Riemannian manifolds and on graphs,
\emph{ Pot. Anal.} \textbf{14} (2001), 301--330.

\bibitem[St1]{St1} E.M. Stein, \emph{Topics in Harmonic Analysis Related to
the Littlewood--Paley Theory}, Annals of Math. Studies, No. \textbf{63},
Princeton N. J., 1970.

\bibitem[St2]{St2} E.M. Stein, \emph{Harmonic Analysis. Real variable
methods, orthogonality and oscillatory integrals}, Princeton Math. Series
No. \textbf{43}, Princeton N. J., 1993.

\bibitem[T1]{T1} M.E. Taylor,
$L^p$ estimates on functions of the Laplace operator,
\emph{Duke Math. J.} \textbf{58} (1989), 773--793.

\bibitem[T2]{T2} M.E. Taylor,
Hardy spaces and bmo on manifolds with bounded geometry,
\emph{J. Geom. Anal.} \textbf{19} (2009), 137--190.


\end{thebibliography}
\end{document}